\newtheorem{proposition}{Proposition}[section]
\newtheorem{theorem}{Theorem}[section]
\newtheorem{corollary}{Corollary}[section]
\theoremstyle{definition}
\newtheorem{definition}{Definition}[section]
\newtheorem{remark}{Remark}[section]
\newtheorem{example}{Example}[section]
\newtheorem{conjecture}{Conjecture}[section]
\numberwithin{equation}{section}
\begin{document}


\title[Harmonically balanced capitulation over quadratic fields of type (9,9)]{Harmonically balanced capitulation \\ over quadratic fields of type (9,9)}

\author{Daniel C. Mayer}
\address{Naglergasse 53\\8010 Graz\\Austria}
\email{algebraic.number.theory@algebra.at}
\urladdr{http://www.algebra.at}

\thanks{Research supported by the Austrian Science Fund (FWF): project P 26008-N25}

\subjclass[2000]{Primary 20D15, 20E18, 20E22, 20F05, 20F12, 20F14, 20--04; secondary 11R37, 11R11, 11R29, 11Y40}

\keywords{Finite \(3\)-groups, Artin transfers to low index subgroups,
Artin pattern, transfer kernels and targets, derived length, relation rank,
searching strategy via \(p\)-group generation algorithm;
first and second Hilbert \(3\)-class field, maximal unramified pro-\(3\) extension,
Shafarevich cohomology criterion, 
\(3\)-tower length, quadratic fields, structure of \(3\)-class groups,
capitulation of \(3\)-classes in unramified \(3\)-extensions, capitulation types}

\date{August 06, 2019}


\begin{abstract}
The isomorphism type of the Galois group \(\mathfrak{G}\) of finite \(3\)-class field towers
of quadratic number fields with \(3\)-class group of type \((9,9)\)
is determined by means of Artin patterns
which contain information on the transfer of \(3\)-classes to unramified abelian \(3\)-extensions.
First, as an approximation of the group \(\mathfrak{G}\),
its metabelianization \(M=\mathfrak{G}/\mathfrak{G}^{\prime\prime}\),
which is isomorphic to the Galois group of the second Hilbert \(3\)-class field,
is sought by sifting the SmallGroups library with the aid of pattern recognition.
In cases with order \(\lvert M\rvert>3^8\), the SmallGroups database must be extended
by means of the \(p\)-group generation algorithm,
which reveals new phenomena of groups with harmonically balanced transfer kernels
and trees with periodic trifurcations.
Bounds for the relation rank \(d_2(M)\) of \(M\)
in dependence on the signature of the quadratic base field
admit the decision whether the derived length of \(\mathfrak{G}\) is
\(\mathrm{dl}(\mathfrak{G})=2\) or \(\mathrm{dl}(\mathfrak{G})\ge 3\).
\end{abstract}

\maketitle


\section{Introduction}
\label{s:Intro}
\noindent
The investigation of \(3\)-class field towers of quadratic fields \(K\)
with \(3\)-class group \(\mathrm{Cl}_3(K)\simeq C_9\times C_9\)
was suggested by Arnold Scholz in a letter of 12 October 1930 to Helmut Hasse
\cite[pp. 155--156]{LmRq}.
Due to limited computational facilities at that time,
Scholz could not find suitable base fields \(K\),
and consequently his project remained unfinished up to now.
In collaboration with M. F. Newman
\cite{MaNm},
we intentionally prepared the revival of the Scholz project
by studying the analogous case of
\(2\)-class field towers of quadratic fields \(K\)
with \(2\)-class group \(\mathrm{Cl}_2(K)\simeq C_4\times C_4\),
inspired by research results of Benjamin and Snyder
\cite{BeSn}.
On the one hand, the \(2\)-power extensions are computationally easier
because of their modest degrees, but on the other hand,
the broader variability of their transfer kernels discourages
the occurrence of \textit{harmonically balanced capitulation} (\S\
\ref{s:Capitulation}).
A common feature of all these \(p\)-class groups
\(G=\mathrm{Gal}(\mathrm{F}_p^1(K)/K)\simeq\mathrm{Cl}_p(K)\simeq C_{p^2}\times C_{p^2}\)
with \(p\in\lbrace 2,3\rbrace\)
is their consistence of \textit{four layers} of intermediate subgroups \(1\le S<G\)
corresponding to
four layers of unramified abelian \(p\)-extensions \(K<N\le\mathrm{F}_p^1(K)\)
within the Hilbert \(p\)-class field \(\mathrm{F}_p^1(K)\) of \(K\),
according to the Artin reciprocity law
\cite{Ar1927}
and the Galois correspondence.
For a coarse overview with cardinalities see Table
\ref{tbl:Layers},
for finer details about generators see \S\ 
\ref{s:NormalLattice},
in particular Figure
\ref{fig:NormalLattice}.


\renewcommand{\arraystretch}{1.1}

\begin{table}[ht]
\caption{Layers of subgroups and field extensions}
\label{tbl:Layers}
\begin{center}
\begin{tabular}{|c||c|c|c|}
\hline
 Layer              & Subgroup index resp. extension degree & \multicolumn{2}{c|}{Cardinality} \\
                    & \((G:S)=\lbrack N:K\rbrack=\)                 & \(p=2\) & \(p=3\)                \\
\hline
 \(\mathrm{Lyr}_1\) & \(p\)                                         & \(3\)   & \(4\)                  \\
 \(\mathrm{Lyr}_2\) & \(p^2\)                                       & \(7\)   & \(13\)                 \\
 \(\mathrm{Lyr}_3\) & \(p^3\)                                       & \(3\)   & \(4\)                  \\
 \(\mathrm{Lyr}_4\) & \(p^4\)                                       & \(1\)   & \(1\)                  \\
\hline
\end{tabular}
\end{center}
\end{table}


\noindent
Similarly as in
\cite{MaNm},
we shall use the strategy of \textit{pattern recognition} via Artin transfers:
we show that computational arithmetical information on the
transfer \(T_{K,N}:\,\mathrm{Cl}_3(K)\to\mathrm{Cl}_3(N)\) of \(3\)-classes
from the base field \(K\) to the fields \(N\) with relative degrees \(3\) and \(9\) in the first and second layer
suffices for determining the \textit{metabelianization} \(M=\mathfrak{G}/\mathfrak{G}^{\prime\prime}\)
of the \(3\)-\textit{class field tower group} \(\mathfrak{G}=\mathrm{Gal}(\mathrm{F}_3^\infty(K)/K)\) of \(K\),
i.e. the Galois group of the maximal unramified pro-\(3\) extension \(\mathrm{F}_3^\infty(K)\) of \(K\).
Data for fields with degrees \(27\) and \(81\) in the third and fourth layer
can be derived by purely group theoretic methods from a presentation of \(M\)
without the need of any number theoretic calculations.
In particular, the structure of the \(3\)-class group \(\mathrm{Cl}_3(\mathrm{F}_3^1(K))\)
of the first Hilbert \(3\)-class field \(\mathrm{F}_3^1(K)\) of \(K\),
which alone forms the fourth layer, can be obtained in this way.


\section{Arithmetic and algebraic Artin pattern}
\label{s:ArtinPattern}
\noindent
The input for the process of pattern recognition
consists of arithmetic information about an assigned number field,
collected in its Artin pattern.

\begin{definition}
\label{dfn:APinNT}
The \textit{number theoretic Artin pattern} \(\mathrm{AP}(K)\)
of an algebraic number field \(K\)
with respect to a prime \(p\)
is defined as the collection \((\varkappa(K),\tau(K))\)
of all kernels \(\varkappa(K)=(\ker(T_{K,N}))_N\) 
and targets \(\tau(K)=(\mathrm{Cl}_p(N))_N\) 
of extension homomorphisms
\(T_{K,N}:\,\mathrm{Cl}_p(K)\to\mathrm{Cl}_p(N)\)
of \(p\)-class groups
from \(K\) to unramified abelian extensions \(K\le N\le \mathrm{F}_p^1(K)\) of \(K\)
within the first Hilbert \(p\)-class field \(\mathrm{F}_p^1(K)\) of \(K\).
\end{definition}


\noindent
The idea of the number theoretic (arithmetic) Artin pattern
\(\mathrm{AP}(K)=(\varkappa(K),\tau(K))\)
was first introduced in
\cite{Ma2011},
using the terminology
transfer kernel type (TKT) \(\varkappa(K)\) and transfer target type (TTT) \(\tau(K)\).
It is determined by means of
class field theoretic routines of the computational algebra system Magma
\cite{BCP,BCFS,MAGMA}.

Before the arithmetic information
can be exploited as a search pattern for a database query in the SmallGroups Library
\cite{BEO}
it must be interpreted as algebraic information,
by means of the Artin reciprocity law of class field theory
\cite{Ar1927,Ar1929}.


\begin{definition}
\label{dfn:APinGT}
Given a prime \(p\),
the \textit{group theoretic Artin pattern} \(\mathrm{AP}(G)\)
of a pro-\(p\) group \(G\)
consists of all kernels \(\varkappa(G)=(\ker(T_{G,S}))_S\)
and targets \(\tau(G)=(S/S^\prime)_S\)
of Artin transfer homomorphisms
\(T_{G,S}:\,G/G^\prime\to S/S^\prime\)
from \(G\) to normal subgroups \(G^\prime\le S\le G\) of finite index \((G:S)<\infty\)
containing the commutator subgroup \(G^\prime\) of \(G\).
\end{definition}


\noindent
For the process of pattern recognition, the group \(G\) is taken as the
metabelian Galois group \(M=\mathrm{Gal}(\mathrm{F}_p^2(K)/K)\)
of the second Hilbert \(p\)-class field \(\mathrm{F}_p^2(K)\) of a number field \(K\),
briefly called the second \(p\)-class group of \(K\)
\cite{Ma2012}.
The following result admits the identification of
arithmetic and algebraic data.


\begin{theorem}
\label{thm:ArtinReciprocity}
Let \(p\) be a prime, then
the group theoretic Artin pattern \(\mathrm{AP}(M)\) of the second \(p\)-class group
\(M=\mathrm{Gal}(\mathrm{F}_p^2(K)/K)\) of \(K\)
coincides with the number theoretic Artin pattern \(\mathrm{AP}(K)\) of \(K\),
and the commutator subgroup \(M^\prime\) of \(M\)
is isomorphic to the \(p\)-class group
\(\mathrm{Cl}_p(\mathrm{F}_p^1(K))\simeq\mathrm{Gal}(\mathrm{F}_p^2(K)/\mathrm{F}_p^1(K))\)
of the first Hilbert \(p\)-class field \(\mathrm{F}_p^1(K)\) of \(K\).
\end{theorem}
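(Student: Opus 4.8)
The plan is to translate each constituent of the arithmetic Artin pattern into its group-theoretic counterpart by means of the Galois correspondence for the canonical tower \(K\le\mathrm{F}_p^1(K)\le\mathrm{F}_p^2(K)\), invoking the Artin reciprocity law \cite{Ar1927} to identify \(p\)-class groups with Galois groups, and Artin's theorem on ideal classes in overfields \cite{Ar1929} to identify the arithmetic class extension map with the group-theoretic transfer. First I would erect the Galois dictionary. Since both \(\mathrm{F}_p^1(K)/K\) and \(\mathrm{F}_p^2(K)/\mathrm{F}_p^1(K)\) are unramified, the composite \(\mathrm{F}_p^2(K)/K\) is unramified; and because each layer of the tower is canonically attached to \(K\), the field \(\mathrm{F}_p^2(K)\) is Galois over \(K\), with group \(M\). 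Every abelian subextension of \(\mathrm{F}_p^2(K)/K\) is an abelian unramified \(p\)-extension of \(K\), hence contained in \(\mathrm{F}_p^1(K)\), while \(\mathrm{F}_p^1(K)/K\) is itself abelian; thus \(\mathrm{F}_p^1(K)\) is the maximal abelian subextension, so that \(M^\prime=\mathrm{Gal}(\mathrm{F}_p^2(K)/\mathrm{F}_p^1(K))\). The subgroups \(M^\prime\le S\le M\), all normal since \(M/M^\prime\) is abelian, then correspond bijectively to the fields \(K\le N\le\mathrm{F}_p^1(K)\) via \(S=\mathrm{Gal}(\mathrm{F}_p^2(K)/N)\), matching the index ranges of Definitions \ref{dfn:APinNT} and \ref{dfn:APinGT}. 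Applying Artin reciprocity to the base field \(\mathrm{F}_p^1(K)\) yields \(M^\prime\simeq\mathrm{Cl}_p(\mathrm{F}_p^1(K))\), which is the second assertion.

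For the transfer targets I would show that the maximal abelian subextension of \(\mathrm{F}_p^2(K)/N\) is precisely \(\mathrm{F}_p^1(N)\). Because \(N\le\mathrm{F}_p^1(K)\) and \(\mathrm{F}_p^1(K)/K\) is abelian, the extension \(\mathrm{F}_p^1(K)/N\) is an abelian unramified \(p\)-extension, whence \(\mathrm{F}_p^1(K)\le\mathrm{F}_p^1(N)\); consequently \(\mathrm{F}_p^1(N)/\mathrm{F}_p^1(K)\) is abelian and unramified, so \(\mathrm{F}_p^1(K)\le\mathrm{F}_p^1(N)\le\mathrm{F}_p^2(K)\). Since \(\mathrm{F}_p^1(N)/N\) is abelian, while conversely any abelian subextension of the unramified extension \(\mathrm{F}_p^2(K)/N\) is an abelian unramified \(p\)-extension of \(N\) and therefore lies in \(\mathrm{F}_p^1(N)\), the two fields coincide. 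Hence \(S/S^\prime=S^{\mathrm{ab}}\simeq\mathrm{Gal}(\mathrm{F}_p^1(N)/N)\simeq\mathrm{Cl}_p(N)\) by Artin reciprocity, which matches \(\tau(M)\) with \(\tau(K)\) entry by entry over all four layers.

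The decisive and hardest step is the identification of the kernels, and here I would invoke Artin's transfer theorem \cite{Ar1929}: under the Artin isomorphisms the square
\[
\begin{array}{ccc}
\mathrm{Cl}_p(K) & \xrightarrow{\;T_{K,N}\;} & \mathrm{Cl}_p(N)\\
\wr & & \wr\\
M/M^\prime & \xrightarrow{\;\mathrm{Ver}\;} & S/S^\prime
\end{array}
\]
commutes, the lower horizontal map being the group-theoretic Verlagerung \(T_{M,S}\). As the vertical arrows are isomorphisms, the kernels correspond, giving \(\ker(T_{K,N})\simeq\ker(T_{M,S})\) and therefore \(\varkappa(K)=\varkappa(M)\). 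The main obstacle is exactly this compatibility: that the arithmetic extension of \(p\)-classes is carried onto the purely group-theoretic transfer is the substantive content of the Artin reciprocity law, whereas the remaining parts reduce to Galois-theoretic bookkeeping and the defining maximality properties of the Hilbert \(p\)-class fields. Assembling the matched kernels and targets across \(\mathrm{Lyr}_1\) through \(\mathrm{Lyr}_4\) then delivers the claimed coincidence \(\mathrm{AP}(M)=\mathrm{AP}(K)\).
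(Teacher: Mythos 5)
Your proposal is correct and takes essentially the same approach as the paper's own proof, which consists solely of a citation to \cite[Thm.~1.12, p.~77]{Ma2016b}: the argument there is precisely your combination of the Galois correspondence for the tower \(K\le\mathrm{F}_p^1(K)\le\mathrm{F}_p^2(K)\), the maximality characterizations identifying \(M^\prime\) with \(\mathrm{Gal}(\mathrm{F}_p^2(K)/\mathrm{F}_p^1(K))\) and \(S^\prime\) with \(\mathrm{Gal}(\mathrm{F}_p^2(K)/\mathrm{F}_p^1(N))\), Artin reciprocity for the targets, and the compatibility of the arithmetic class extension with the group-theoretic Verlagerung \cite{Ar1929} for the kernels. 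In short, you have supplied in full, and correctly, the standard argument that the paper delegates to its reference.
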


\begin{proof}
See
\cite[\S\ 1.6, pp. 77--79]{Ma2016b},
in particular
\cite[Thm. 1.12, p. 77]{Ma2016b}.
\end{proof}


\section{Normal lattice and layers of \(G/G^\prime\simeq C_9\times C_9\)}
\label{s:NormalLattice}
\noindent
Let \(G\) be a finite \(3\)-group with two generators \(x,y\) such that
\(G=\langle x,y\rangle\) and \(G/G^\prime\simeq C_9\times C_9\),
that is, \(x^9,y^9\in G^\prime\) but \(x^3,y^3\notin G^\prime\).
Here \(G^\prime\) denotes the \textit{derived subgroup} of \(G\).
Let \(\mathrm{Lyr}_0(G)=\lbrace G\rbrace\) be the \textit{0th layer} with \(\tau_0=(G/G^\prime)\).
In the following description of the \textit{normal lattice} of \(G\) above \(G^\prime\),
we give generators for the members of each layer mentioned in Table
\ref{tbl:Layers}.


\begin{proposition}
\label{prp:Layers}
The \textbf{Frattini subgroup} \(J_0:=\Phi(G)=\langle x^3,y^3,G^\prime\rangle\) of \(G\)
is the intersection of the four \textbf{maximal subgroups} which form the \textbf{first layer} \(\mathrm{Lyr}_1(G)\):
\begin{equation}
\label{eqn:Lyr1}
H_1:=\langle x,J_0\rangle,\quad H_2:=\langle xy,J_0\rangle,\quad H_3:=\langle xy^2,J_0\rangle,\quad H_4:=\langle y,J_0\rangle,\quad
\end{equation}
with Artin pattern \((\varkappa_1,\tau_1)\).
They are of index \(3\) in \(G\) and can also be viewed as
\[
H_1=\langle x,y^3,G^\prime\rangle,\quad H_2=\langle xy,y^3,G^\prime\rangle,\quad H_3=\langle xy^2,y^3,G^\prime\rangle,\quad H_4=\langle y,x^3,G^\prime\rangle,
\]
which shows that \(H_i/G^\prime\simeq C_9\times C_3\), for each \(1\le i\le 4\),
whereas \(J_0/G^\prime\simeq C_3\times C_3\).

In addition to \(J_0\), there are twelve subgroups \(J_{ik}\)
with \(1\le i\le 4\), \(1\le k\le 3\), of index \(9\) in \(G\)
which belong to the \textbf{second layer} \(\mathrm{Lyr}_2(G)\):
\begin{equation}
\label{eqn:Lyr2}
\begin{aligned}
\text{three subgroups of } H_1,
\quad J_{11}&:=\langle x,G^\prime\rangle,\quad J_{12}:=\langle xy^3,G^\prime\rangle,\quad J_{13}:=\langle xy^6,G^\prime\rangle, \\
\text{three subgroups of } H_2,
\quad J_{21}&:=\langle xy,G^\prime\rangle,\quad J_{22}:=\langle xy^4,G^\prime\rangle,\quad J_{23}:=\langle xy^7,G^\prime\rangle, \\
\text{three subgroups of } H_3,
\quad J_{31}&:=\langle xy^2,G^\prime\rangle,\quad J_{32}:=\langle xy^5,G^\prime\rangle,\quad J_{33}:=\langle xy^8,G^\prime\rangle, \quad \text{ and} \\
\text{three subgroups of } H_4,
\quad J_{41}&:=\langle y,G^\prime\rangle,\quad J_{42}:=\langle x^3y,G^\prime\rangle,\quad J_{43}:=\langle x^6y,G^\prime\rangle,
\end{aligned}
\end{equation}
with Artin pattern \((\varkappa_2,\tau_2)\).
They all have a cyclic quotient \(J_{ik}/G^\prime\simeq C_9\).

Finally, there are four subgroups of index \(27\) in \(G\)
which form the \textbf{third layer} \(\mathrm{Lyr}_3(G)\):
\begin{equation}
\label{eqn:Lyr3}
K_1:=\langle x^3,G^\prime\rangle,\quad K_2:=\langle x^3y^3,G^\prime\rangle,\quad K_3:=\langle x^3y^6,G^\prime\rangle,\quad K_4:=\langle y^3,G^\prime\rangle,\quad
\end{equation}
with Artin pattern \((\varkappa_3,\tau_3)\).
They have quotient \(K_i/G^\prime\simeq C_3\), for each \(1\le i\le 4\).
\end{proposition}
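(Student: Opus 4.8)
The plan is to verify the claimed lattice structure directly from the abelian quotient $G/G^\prime\simeq C_9\times C_9$, reducing everything to a computation in the finite abelian group $A:=G/G^\prime$ written additively as $\mathbb{Z}/9\oplus\mathbb{Z}/9$ with generators $\bar x,\bar y$. The key observation is that the normal subgroups of $G$ containing $G^\prime$ correspond bijectively, via the Galois/lattice correspondence, to subgroups of $A$; a subgroup $G^\prime\le S\le G$ is recovered from $S/G^\prime\le A$, and its index satisfies $(G:S)=(A:S/G^\prime)$. Thus I would replace each assertion about layers of $G$ by the corresponding statement about subgroups of $A$ of index $3$, $9$, $27$, and compute quotients $S/G^\prime\cong (S/G^\prime)$ as subgroups of $A$.

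First I would treat the Frattini subgroup: since $\Phi(G)=G^3G^\prime$ for a finite $3$-group, and $x^9,y^9\in G^\prime$, $x^3,y^3\notin G^\prime$, one has $\Phi(G)/G^\prime=3A=\langle 3\bar x,3\bar y\rangle\simeq C_3\times C_3$, confirming $J_0/G^\prime\simeq C_3\times C_3$. The maximal subgroups correspond to the four index-$3$ subgroups of $A$, equivalently the four one-dimensional subspaces of $A/\Phi(G)A\simeq\mathbb{F}_3^2$; listing the projective points $[1:0],[1:1],[1:2],[0:1]$ gives exactly the generators $x,xy,xy^2,y$ of \eqref{eqn:Lyr1}, and $J_0$ is their intersection. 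For each $H_i$ I would exhibit $H_i/G^\prime$ as the subgroup of $A$ generated by the stated element together with $3A$ and check it is isomorphic to $C_9\times C_3$ by computing orders of the generators in $A$.

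Next I would enumerate the second layer. Index-$9$ subgroups of $A=\mathbb{Z}/9\oplus\mathbb{Z}/9$ fall into two kinds: the unique subgroup $3A\simeq C_3\times C_3$ (namely $J_0$), and the twelve cyclic subgroups of order $9$, each generated by an element of order $9$; grouping these cyclic subgroups by which maximal subgroup $H_i$ contains them yields the $4\times 3$ array \eqref{eqn:Lyr2}, and each such $J_{ik}/G^\prime$ is manifestly $\simeq C_9$. The third layer consists of the index-$27$ subgroups of $A$, i.e.\ the subgroups of order $3$; there are exactly four of them, the one-dimensional subspaces of the socle $A[3]=3A$, generated by $x^3,x^3y^3,x^3y^6,y^3$, each with quotient $C_3$, giving \eqref{eqn:Lyr3}.

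The only genuine work, as opposed to bookkeeping, is the combinatorial count ensuring completeness and non-redundancy of each layer: one must confirm that the listed subgroups are pairwise distinct and that their number matches the total count of subgroups of $A$ of the relevant index (four in $\mathrm{Lyr}_1$, thirteen in $\mathrm{Lyr}_2$, four in $\mathrm{Lyr}_3$, consistent with Table~\ref{tbl:Layers}). I expect the main obstacle to be the second layer, where I must carefully check that the twelve cyclic order-$9$ subgroups, written with the chosen coset representatives $xy^{3j}$ and $x^{3j}y$, are distinct and exhaust all cyclic subgroups of order $9$ not equal to within the same maximal subgroup; counting the elements of order $9$ ($9\cdot9-3\cdot3=72$ of them, each cyclic order-$9$ subgroup containing $\varphi(9)=6$ generators, giving $72/6=12$ subgroups) settles this cleanly and confirms the enumeration is exhaustive.
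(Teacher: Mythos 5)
Your proposal is correct, and in fact the paper offers no proof of Proposition~\ref{prp:Layers} at all: it is stated as an elementary description of the normal lattice, with precisely the verification you spell out --- passage to $A=G/G^\prime\simeq C_9\times C_9$ via the correspondence theorem, $\Phi(G)/G^\prime=3A$, the four projective points of $A/3A\simeq\mathbb{F}_3^2$ for $\mathrm{Lyr}_1$, the counts $(81-9)/\varphi(9)=12$ cyclic subgroups plus the unique $3A$ for the thirteen members of $\mathrm{Lyr}_2$, and the four lines of the socle for $\mathrm{Lyr}_3$, all consistent with Table~\ref{tbl:Layers} --- left implicit. The only blemishes are cosmetic: the expression $A/\Phi(G)A$ should read $A/3A$, and for the grouping in \eqref{eqn:Lyr2} it is worth one extra line to note that each cyclic $J\le A$ of order $9$ has cyclic quotient $A/J$ and hence lies in a \emph{unique} maximal subgroup, so the $4\times 3$ array is automatically non-redundant.
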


\noindent
Additionally, let \(\mathrm{Lyr}_4(G)=\lbrace G^\prime\rbrace\) be the \textit{fourth layer} with \(\tau_4=(G^\prime/G^{\prime\prime})\).

Logarithmic abelian type invariants are used for the components of \(\tau\),
for instance,
\((311)\) or even \((31^2)\), with formal exponents indicating iteration, instead of \((27,3,3)\). See
\cite[\S\ 2.1, Eqn. (2.3), p. 661]{Ma2017b}.


\begin{figure}[ht]
\caption{Normal lattice of \(G/G^\prime\simeq C_9\times C_9\)}
\label{fig:NormalLattice}

{\tiny

\setlength{\unitlength}{1.0cm}
\begin{picture}(14,6)(-12,-10)



\put(-12.2,-4.9){\makebox(0,0)[rb]{(Part \(1\))}}
\put(-11,-5){\circle*{0.2}}
\put(-10.8,-4.9){\makebox(0,0)[lb]{\(G\)}}
\put(-10.8,-5.1){\makebox(0,0)[lt]{}}

\put(-11,-5){\line(-1,-1){2}}

\put(-12,-6){\circle{0.2}}
\put(-12.2,-5.9){\makebox(0,0)[rb]{\(H_1\)}}

\put(-12,-6){\line(-1,-2){0.4}}
\put(-12,-6){\line(1,-2){0.4}}
\put(-12,-6){\line(1,-1){1}}

\put(-13,-7){\circle{0.2}}
\put(-13.2,-7){\makebox(0,0)[rc]{\(J_{11}\)}}
\put(-12.4,-7){\circle{0.2}}
\put(-12.5,-7){\makebox(0,0)[rc]{\(J_{12}\)}}
\put(-11.6,-7){\circle{0.2}}
\put(-11.7,-7){\makebox(0,0)[rc]{\(J_{13}\)}}

\put(-10.8,-6.9){\makebox(0,0)[lb]{}}
\put(-11,-7){\circle*{0.2}}
\put(-10.8,-7.1){\makebox(0,0)[lt]{\(J_0\)}}

\put(-12,-8){\line(-1,2){0.4}}
\put(-12,-8){\line(1,2){0.4}}
\put(-12,-8){\line(1,1){1}}

\put(-12,-8){\circle{0.2}}
\put(-12.2,-8.1){\makebox(0,0)[rt]{\(K_1\)}}

\put(-11,-9){\line(-1,1){2}}

\put(-11,-9){\circle*{0.2}}
\put(-10.8,-8.9){\makebox(0,0)[lb]{}}
\put(-10.8,-9.1){\makebox(0,0)[lt]{\(G^\prime\)}}



\put(-8.2,-4.9){\makebox(0,0)[rb]{(Part \(2\))}}
\put(-7,-5){\circle*{0.2}}
\put(-6.8,-4.9){\makebox(0,0)[lb]{\(G\)}}
\put(-6.8,-5.1){\makebox(0,0)[lt]{}}

\put(-7,-5){\line(-1,-1){2}}

\put(-8,-6){\circle{0.2}}
\put(-8.2,-5.9){\makebox(0,0)[rb]{\(H_2\)}}

\put(-8,-6){\line(-1,-2){0.4}}
\put(-8,-6){\line(1,-2){0.4}}
\put(-8,-6){\line(1,-1){1}}

\put(-9,-7){\circle{0.2}}
\put(-9.2,-7){\makebox(0,0)[rc]{\(J_{21}\)}}
\put(-8.4,-7){\circle{0.2}}
\put(-8.5,-7){\makebox(0,0)[rc]{\(J_{22}\)}}
\put(-7.6,-7){\circle{0.2}}
\put(-7.7,-7){\makebox(0,0)[rc]{\(J_{23}\)}}

\put(-6.8,-6.9){\makebox(0,0)[lb]{}}
\put(-7,-7){\circle*{0.2}}
\put(-6.8,-7.1){\makebox(0,0)[lt]{\(J_0\)}}

\put(-8,-8){\line(-1,2){0.4}}
\put(-8,-8){\line(1,2){0.4}}
\put(-8,-8){\line(1,1){1}}

\put(-8,-8){\circle{0.2}}
\put(-8.2,-8.1){\makebox(0,0)[rt]{\(K_2\)}}

\put(-7,-9){\line(-1,1){2}}

\put(-7,-9){\circle*{0.2}}
\put(-6.8,-8.9){\makebox(0,0)[lb]{}}
\put(-6.8,-9.1){\makebox(0,0)[lt]{\(G^\prime\)}}



\put(-4.2,-4.9){\makebox(0,0)[rb]{(Part \(3\))}}
\put(-3,-5){\circle*{0.2}}
\put(-2.8,-4.9){\makebox(0,0)[lb]{\(G\)}}
\put(-2.8,-5.1){\makebox(0,0)[lt]{}}

\put(-3,-5){\line(-1,-1){2}}

\put(-4,-6){\circle{0.2}}
\put(-4.2,-5.9){\makebox(0,0)[rb]{\(H_3\)}}

\put(-4,-6){\line(-1,-2){0.4}}
\put(-4,-6){\line(1,-2){0.4}}
\put(-4,-6){\line(1,-1){1}}

\put(-5,-7){\circle{0.2}}
\put(-5.2,-7){\makebox(0,0)[rc]{\(J_{31}\)}}
\put(-4.4,-7){\circle{0.2}}
\put(-4.5,-7){\makebox(0,0)[rc]{\(J_{32}\)}}
\put(-3.6,-7){\circle{0.2}}
\put(-3.7,-7){\makebox(0,0)[rc]{\(J_{33}\)}}

\put(-2.8,-6.9){\makebox(0,0)[lb]{}}
\put(-3,-7){\circle*{0.2}}
\put(-2.8,-7.1){\makebox(0,0)[lt]{\(J_0\)}}

\put(-4,-8){\line(-1,2){0.4}}
\put(-4,-8){\line(1,2){0.4}}
\put(-4,-8){\line(1,1){1}}

\put(-4,-8){\circle{0.2}}
\put(-4.2,-8.1){\makebox(0,0)[rt]{\(K_3\)}}

\put(-3,-9){\line(-1,1){2}}

\put(-3,-9){\circle*{0.2}}
\put(-2.8,-8.9){\makebox(0,0)[lb]{}}
\put(-2.8,-9.1){\makebox(0,0)[lt]{\(G^\prime\)}}



\put(-0.2,-4.9){\makebox(0,0)[rb]{(Part \(4\))}}
\put(1,-5){\circle*{0.2}}
\put(1.2,-4.9){\makebox(0,0)[lb]{\(G\)}}
\put(1.2,-5.1){\makebox(0,0)[lt]{full group}}

\put(1,-5){\line(-1,-1){2}}

\put(0,-6){\circle{0.2}}
\put(-0.2,-5.9){\makebox(0,0)[rb]{\(H_4\)}}

\put(0,-6){\line(-1,-2){0.4}}
\put(0,-6){\line(1,-2){0.4}}
\put(0,-6){\line(1,-1){1}}

\put(-1,-7){\circle{0.2}}
\put(-1.2,-7){\makebox(0,0)[rc]{\(J_{41}\)}}
\put(-0.4,-7){\circle{0.2}}
\put(-0.5,-7){\makebox(0,0)[rc]{\(J_{42}\)}}
\put(0.4,-7){\circle{0.2}}
\put(0.3,-7){\makebox(0,0)[rc]{\(J_{43}\)}}

\put(1.2,-6.9){\makebox(0,0)[lb]{Frattini subg.}}
\put(1,-7){\circle*{0.2}}
\put(1.2,-7.1){\makebox(0,0)[lt]{\(J_0=\Phi(G)\)}}

\put(0,-8){\line(-1,2){0.4}}
\put(0,-8){\line(1,2){0.4}}
\put(0,-8){\line(1,1){1}}

\put(0,-8){\circle{0.2}}
\put(-0.2,-8.1){\makebox(0,0)[rt]{\(K_4\)}}

\put(1,-9){\line(-1,1){2}}

\put(1,-9){\circle*{0.2}}
\put(1.2,-8.9){\makebox(0,0)[lb]{derived subg.}}
\put(1.2,-9.1){\makebox(0,0)[lt]{\(G^\prime\)}}


\end{picture}

}

\end{figure}
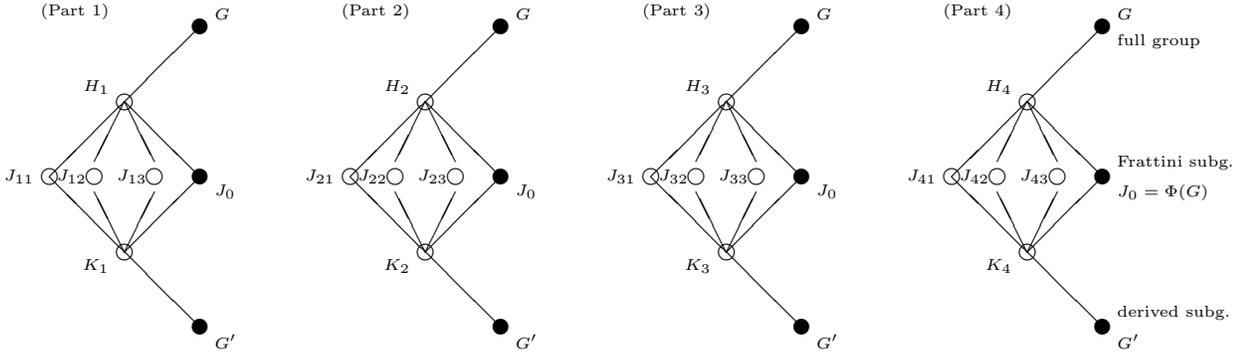

Figure
\ref{fig:NormalLattice}
shows that the normal lattice of \(G\) above the derived subgroup \(G^\prime\)
consists of four similar parts (\(1\le i\le 4\)),
agglutinated at the subgroups \(G\), \(J_0=\Phi(G)\) and \(G^\prime\).


\section{Cohomology criterion and Schur \(\sigma\)-groups}
\label{s:Shafarevich}
\noindent
Let \(p\) be a prime number and \(G\) be a pro-\(p\) group.
Then the finite field \(\mathbb{F}_p\) is a trivial \(G\)-module,
and two crucial cohomological invariants of \(G\) are
the \textit{generator rank} \(d_1:=d_{p,1}(G):=\dim_{\mathbb{F}_p}\mathrm{H}^1(G,\mathbb{F}_p)\) and
the \textit{relation rank} \(d_2:=d_{p,2}(G):=\dim_{\mathbb{F}_p}\mathrm{H}^2(G,\mathbb{F}_p)\).
Shafarevich has established the following necessary criterion.


\begin{theorem}
\label{thm:Shafarevich}
The Galois group
\(G=\mathrm{Gal}(\mathrm{F}_p^\infty(K)/K)\)
of the Hilbert \(p\)-class field tower \(\mathrm{F}_p^\infty(K)\) of a number field \(K\),
that is the maximal unramified pro-\(p\) extension of \(K\),
is confined to constraints in terms of the generator rank and relation rank of \(G\):
\[
d_1\le d_2\le d_1+r+\theta,
\]
where \(r\) and \(\theta\) are defined in the following way:
if \((r_1,r_2)\) denotes the signature of \(K\),
then the torsion free \textit{Dirichlet unit rank} of \(K\) is given by \(r=r_1+r_2-1\)
and
\[
\theta=
\begin{cases}
1 & \text{if } K \text{ contains a primitive } p\text{-th root of unity}, \\
0 & \text{otherwise}.
\end{cases}
\]
\end{theorem}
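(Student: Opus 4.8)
The plan is to translate both inequalities into statements about a minimal pro-\(p\) presentation
\(1\to R\to F\to G\to 1\),
where \(F\) is a free pro-\(p\) group on \(d_1\) generators and \(R\) is the closed normal relation subgroup. Since class groups are finite, \(G\) is finitely generated, and by the Burnside basis theorem for pro-\(p\) groups \(d_1=\dim_{\mathbb{F}_p}\mathrm{H}^1(G,\mathbb{F}_p)\) is the minimal number of topological generators, while \(d_2=\dim_{\mathbb{F}_p}\mathrm{H}^2(G,\mathbb{F}_p)\) is the minimal number of relations, i.e. the least number of elements needed to generate \(R\) as a closed normal subgroup of \(F\). The first arithmetic input is the Artin reciprocity law of Theorem \ref{thm:ArtinReciprocity}: the abelianization \(G^{\mathrm{ab}}\) is the Galois group of the maximal unramified abelian \(p\)-extension of \(K\), hence \(G^{\mathrm{ab}}\simeq\mathrm{Cl}_p(K)\), so that \(d_1\) equals the \(p\)-rank of the \(p\)-class group.

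For the lower bound \(d_1\le d_2\) I would exploit the finiteness of \(\mathrm{Cl}_p(K)\). Abelianizing the presentation gives \(G^{\mathrm{ab}}\simeq F^{\mathrm{ab}}/\overline{R}\simeq\mathbb{Z}_p^{d_1}/\overline{R}\), where \(\overline{R}\) is the image of \(R\) in \(F^{\mathrm{ab}}\simeq\mathbb{Z}_p^{d_1}\). Because abelianization turns normal generation into module generation (all conjugates of a relation have the same image in the abelian quotient), \(\overline{R}\) is topologically generated by at most \(d_2\) elements and is therefore a \(\mathbb{Z}_p\)-module of rank at most \(d_2\). Finiteness of \(G^{\mathrm{ab}}\simeq\mathrm{Cl}_p(K)\) forces \(\overline{R}\) to have finite index in \(\mathbb{Z}_p^{d_1}\), hence rank exactly \(d_1\). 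Comparing ranks yields \(d_1\le d_2\); this half is an essentially formal consequence of the finiteness of the class group.

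The upper bound \(d_2\le d_1+r+\theta\) is the genuine Shafarevich content, and it is here that the global units enter. The plan is to pass to the cohomology of the restricted-ramification Galois group and use global duality. The excess \(d_2-d_1\) of relations over generators is to be identified with a quotient of the unit group modulo \(p\)-th powers: via the fundamental exact sequence of class field theory relating the units \(E_K=\mathcal{O}_K^\times\), the local units, the abelianized Galois group and the class group, together with Poitou--Tate duality, one bounds the surplus contribution to \(\mathrm{H}^2(G,\mathbb{F}_p)\) by \(\dim_{\mathbb{F}_p}(E_K/E_K^p)\). Dirichlet's unit theorem then closes the count: \(E_K\simeq\mu(K)\times\mathbb{Z}^{r}\) with \(r=r_1+r_2-1\), whence \(\dim_{\mathbb{F}_p}(E_K/E_K^p)=r+\dim_{\mathbb{F}_p}(\mu(K)/\mu(K)^p)=r+\theta\), the term \(\theta\) being nonzero exactly when \(K\) contains a primitive \(p\)-th root of unity.

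The hard part will be this upper bound, and specifically the precise cohomological bookkeeping. One must show that the defect \(d_2-d_1\) is captured by the units alone and not by some larger global or local term, and one must correctly apportion the archimedean contributions: it is the infinite places that supply the torsion-free Dirichlet rank \(r=r_1+r_2-1\), while isolating the \(\mu_p\)-part cleanly gives the correction \(\theta\). Keeping the Poitou--Tate and Euler-characteristic contributions from the infinite places aligned with the Dirichlet rank, and separating off exactly the root-of-unity term, is the delicate step on which the sharpness of the stated bound rests.
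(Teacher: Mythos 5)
The paper itself does not prove Theorem \ref{thm:Shafarevich}: its ``proof'' is a one-line citation of \cite[Thm. 5.1, p. 28]{Ma2015}, which reproduces the classical Shafarevich argument. Your overall architecture matches that classical argument, and your lower bound is complete and correct: in a minimal presentation \(1\to R\to F\to G\to 1\) the image \(\overline{R}\subseteq F^{\mathrm{ab}}\simeq\mathbb{Z}_p^{d_1}\) is indeed generated by at most \(d_2\) elements (conjugates of a normal generator have equal image in the abelianization), and finiteness of \(G^{\mathrm{ab}}\simeq\mathrm{Cl}_p(K)\) forces \(\overline{R}\) to have full rank \(d_1\), whence \(d_1\le d_2\). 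The final bookkeeping is also right: \(\dim_{\mathbb{F}_p}(E_K/E_K^p)=r+\theta\) by Dirichlet's unit theorem, with \(\theta=1\) exactly when \(\mu_p\subset K\), since the torsion of \(E_K\) is cyclic.

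For the upper bound, however, your proposal stops exactly where the theorem lives. The inequality \(d_2-d_1\le\dim_{\mathbb{F}_p}(E_K/E_K^p)\) is never derived; it \emph{is} Shafarevich's theorem, and announcing that Poitou--Tate duality ``bounds the surplus'' does not yet produce it --- as written, the second half of your argument reduces the statement to itself. Moreover, the tool you name does not apply off the shelf: here \(G=G_S(p)\) with \(S=\emptyset\) (the extension is unramified everywhere), whereas the Poitou--Tate and global Euler--Poincar\'e machinery is set up for sets \(S\) containing all places above \(p\) and the archimedean places. The standard proofs must therefore take a detour: either Shafarevich's original class-field-theoretic count, presenting \(G\) as a quotient of a suitable \(S\)-ramified group with \(S\supseteq S_p\cup S_\infty\) and controlling the extra relations by the group of elements \(a\in K^\times\) that are everywhere locally a unit times a \(p\)-th power --- its dimension modulo \(K^{\times p}\) is \(\dim_{\mathbb{F}_p}(E_K/E_K^p)+\dim_{\mathbb{F}_p}\mathrm{Cl}_K\lbrack p\rbrack\), and the class-group contribution must be shown to cancel against generators --- or the equivalent descent in Koch's and Neukirch--Schmidt--Wingberg's treatments. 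You correctly flag this as the delicate step, but flagging it is not supplying it; to complete the proof you would need to carry out that computation (or an equivalent Schur-multiplier count) explicitly, including the apportionment of archimedean contributions that you defer.
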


\begin{proof}
See
\cite[Thm. 5.1, p. 28]{Ma2015}.
\end{proof}


\begin{corollary}
\label{cor:Shafarevich}
Applied to the particular case of \(p=3\)
and the \(3\)-class field tower group \(\mathfrak{G}=\mathrm{Gal}(\mathrm{F}_3^\infty(K)/K)\)
of a quadratic number field \(K\) with \(3\)-class group
\(\mathfrak{G}/\mathfrak{G}^\prime\simeq\mathrm{Cl}_3(K)\simeq C_9\times C_9\),
and thus with Frattini quotient \(\mathfrak{G}/\Phi(\mathfrak{G})\simeq C_3\times C_3\),
the \textit{Shafarevich criterion} becomes
\[
2\le d_2\le 2+r,
\]
where
\[
r=
\begin{cases}
1 & \text{for real } K \text{ with } (r_1,r_2)=(2,0), \\
0 & \text{for imaginary } K \text{ with } (r_1,r_2)=(0,1).
\end{cases}
\]
\end{corollary}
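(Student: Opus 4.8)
The plan is to obtain the corollary as a direct specialization of Theorem~\ref{thm:Shafarevich} to the prime $p=3$ and the group $G=\mathfrak{G}=\mathrm{Gal}(\mathrm{F}_3^\infty(K)/K)$, so that the entire task reduces to evaluating the three quantities $d_1$, $\theta$ and $r$ that occur in the general inequality $d_1\le d_2\le d_1+r+\theta$, and then substituting them.

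First I would pin down the generator rank $d_1=d_{3,1}(\mathfrak{G})$. Since $\mathrm{H}^1(\mathfrak{G},\mathbb{F}_3)$ is canonically isomorphic to the group of continuous homomorphisms $\mathrm{Hom}(\mathfrak{G}/\Phi(\mathfrak{G}),\mathbb{F}_3)$, it suffices to read off the Frattini quotient. The hypothesis $\mathrm{Cl}_3(K)\simeq C_9\times C_9$ yields $\mathfrak{G}/\mathfrak{G}^\prime\simeq C_9\times C_9$, and because $\Phi(\mathfrak{G})=\mathfrak{G}^3\,\mathfrak{G}^\prime$ one has $\mathfrak{G}/\Phi(\mathfrak{G})\simeq(C_9\times C_9)/(C_9\times C_9)^3\simeq C_3\times C_3$, as already recorded in the statement. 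The Burnside basis theorem then gives $d_1=\dim_{\mathbb{F}_3}\mathbb{F}_3^2=2$, which fixes the lower bound $2\le d_2$.

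Next I would establish that $\theta=0$ for every admissible base field $K$. By definition $\theta=1$ exactly when $K$ contains a primitive cube root of unity $\zeta_3$; but $\mathbb{Q}(\zeta_3)=\mathbb{Q}(\sqrt{-3})$ is the unique quadratic field with this property, and it has class number $1$, hence trivial $3$-class group. This is incompatible with $\mathrm{Cl}_3(K)\simeq C_9\times C_9$, so necessarily $K\ne\mathbb{Q}(\sqrt{-3})$, whence $\zeta_3\notin K$ and $\theta=0$. I regard this exclusion as the only genuinely non-formal step of the proof, since it is precisely what collapses the general upper bound $d_1+r+\theta$ to $2+r$; the remaining manipulations are bookkeeping.

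Finally, reading off the signature of a quadratic field gives the torsion-free Dirichlet unit rank $r=r_1+r_2-1$ in the two cases: real fields have $(r_1,r_2)=(2,0)$, whence $r=1$, and imaginary fields have $(r_1,r_2)=(0,1)$, whence $r=0$. Substituting $d_1=2$ and $\theta=0$ into the Shafarevich inequality of Theorem~\ref{thm:Shafarevich} then produces $2\le d_2\le 2+r$ with $r$ as tabulated, which is exactly the assertion of the corollary.
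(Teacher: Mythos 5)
Your proposal is correct and follows essentially the same route as the paper's own proof: it specializes Theorem~\ref{thm:Shafarevich} by computing $d_1=2$ via the Burnside basis theorem, deducing $\theta=0$ from the fact that $\mathbb{Q}(\zeta_3)=\mathbb{Q}(\sqrt{-3})$ has class number $1$ and hence cannot have $3$-class group $C_9\times C_9$, and reading off $r$ from the signature. You merely spell out the Frattini-quotient computation and the exclusion of $\mathbb{Q}(\sqrt{-3})$ in more detail than the paper does.
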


\begin{proof}
The generator rank of \(\mathfrak{G}\) is \(d_1=2\), according to the Burnside basis theorem,
\(\theta=0\) because the cyclotomic quadratic field \(\mathbb{Q}(\sqrt{-3})\) has class number \(h=1\),
and the Dirichlet unit rank \(r\) of \(K\) is given in the indicated way.
\end{proof}


For \textit{imaginary} quadratic base fields, the cohomology criterion is sharp, \(d_2=d_1=2\),
and motivates the following definitions.

\begin{definition}
\label{dfn:Schur}
The pro-\(p\) group \(G\) is called a \textit{Schur group}
if it has a balanced presentation with coinciding relation rank and generator rank, \(d_2=d_1\).

If there exists an automorphism \(\sigma\in\mathrm{Aut}(G)\)
which acts as inversion on \(\mathrm{H}^1(G,\mathbb{F}_p)\), that is
\((\forall\,x\in\mathrm{H}^1(G,\mathbb{F}_p))\,x^\sigma=x^{-1}\),
then \(G\) is called a \(\sigma1\)-\textit{group} or briefly \(\sigma\)-\textit{group}.

\noindent
If \(\sigma\) additionally acts as inversion on \(\mathrm{H}^2(G,\mathbb{F}_p)\),
then \(G\) is called a \(\sigma2\)-\textit{group}.
\end{definition}


\begin{theorem}
\label{thm:Schur}
For an odd prime \(p\ge 3\),
the \textbf{\(p\)-class field tower group} \(\mathfrak{G}=\mathrm{Gal}(\mathrm{F}_p^\infty(K)/K)\)
of an imaginary quadratic field must be a Schur \(\sigma\)-group.
More generally,
the \(p\)-class field tower group \(\mathfrak{G}\)
and all \textbf{higher \(p\)-class groups} \(\mathrm{Gal}(\mathrm{F}_p^n(K)/K)\), \(n\ge 2\),
must be \(\sigma2\)-groups.
\end{theorem}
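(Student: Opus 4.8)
The plan is to establish the three constituent properties in turn — the balanced (Schur) presentation $d_2=d_1$, the action of complex conjugation as inversion on $\mathrm{H}^1$ (the $\sigma1$-property), and its inversion on $\mathrm{H}^2$ (the $\sigma2$-property) — and only then to descend to the higher $p$-class groups. For the Schur property I would invoke Theorem~\ref{thm:Shafarevich} directly. An imaginary quadratic field has signature $(r_1,r_2)=(0,1)$, hence Dirichlet unit rank $r=r_1+r_2-1=0$; moreover $\theta=0$ for every odd $p$, since the only imaginary quadratic field containing a primitive $p$-th root of unity is $\mathbb{Q}(\sqrt{-3})$ (the case $p=3$), whose $3$-class group is trivial and whose tower group is the trivial Schur $\sigma$-group. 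In all remaining cases the Shafarevich bound $d_1\le d_2\le d_1+r+\theta$ collapses to $d_2=d_1$, so $\mathfrak{G}$ has a balanced presentation.

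To produce $\sigma$ I would use that the maximal unramified pro-$p$ extension $\mathrm{F}_p^\infty(K)$ is characteristic over $K$ and therefore normal over $\mathbb{Q}$. The nontrivial automorphism $\tau\in\mathrm{Gal}(K/\mathbb{Q})$ lifts to $\mathcal{G}:=\mathrm{Gal}(\mathrm{F}_p^\infty(K)/\mathbb{Q})$, and conjugation by any lift induces an automorphism $\sigma\in\mathrm{Aut}(\mathfrak{G})$ of order $2$ modulo inner automorphisms. Under the Artin isomorphism $\mathfrak{G}^{\mathrm{ab}}\simeq\mathrm{Cl}_p(K)$ of Theorem~\ref{thm:ArtinReciprocity} the action of $\sigma$ is arithmetic conjugation, and since $\mathfrak{a}\bar{\mathfrak{a}}=(N\mathfrak{a})$ is principal, conjugation inverts every ideal class. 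Thus $\sigma$ acts as $-1$ on $\mathrm{H}_1(\mathfrak{G},\mathbb{F}_p)=\mathfrak{G}^{\mathrm{ab}}/p$, hence, by duality and because inner automorphisms act trivially on cohomology, as $-1$ on $\mathrm{H}^1(\mathfrak{G},\mathbb{F}_p)$. Together with the Schur property this makes $\mathfrak{G}$ a Schur $\sigma$-group.

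The heart of the matter is the $\sigma2$-property, and here the hypothesis $p\ge 3$ is decisive. Because $\lvert\mathrm{Gal}(K/\mathbb{Q})\rvert=2$ is invertible modulo the odd prime $p$, the Hochschild--Serre spectral sequence of $1\to\mathfrak{G}\to\mathcal{G}\to\mathrm{Gal}(K/\mathbb{Q})\to 1$ collapses and yields $\mathrm{H}^i(\mathcal{G},\mathbb{F}_p)\simeq\mathrm{H}^i(\mathfrak{G},\mathbb{F}_p)^{\mathrm{Gal}(K/\mathbb{Q})}$ for every $i$; since $\sigma$ is an involution and $p$ is odd, the right-hand side is exactly the $(+1)$-eigenspace $\mathrm{H}^i_+$ of $\sigma$. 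The $\sigma1$-property already forces $\mathrm{H}^1_+=0$, and the $\sigma2$-property is precisely the assertion $\mathrm{H}^2_+=0$, i.e. $\mathrm{H}^2(\mathcal{G},\mathbb{F}_p)=0$. I would prove this last vanishing by a Shafarevich-type relation count for $\mathcal{G}$ over the base field $\mathbb{Q}$, keeping the $\sigma$-eigenspaces separate: the invariant part of the relation module is governed by the unit contribution of $K$, measured by $r$ and $\theta$, both of which vanish for imaginary $K$. The main obstacle is exactly this eigenspace-refined computation — showing that no $\sigma$-invariant relation is forced — which is where the absence of a fundamental unit ($r=0$) is indispensable, and which recovers the classical theorem of Koch and Venkov.

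Finally, for the higher $p$-class groups I would note that $\mathrm{Gal}(\mathrm{F}_p^n(K)/K)=\mathfrak{G}/\mathfrak{G}^{(n)}$ is the quotient by the characteristic $n$-th derived subgroup, so $\sigma$ descends and $\mathrm{F}_p^n(K)$ is again normal over $\mathbb{Q}$ with $\mathrm{Gal}(\mathrm{F}_p^n(K)/\mathbb{Q})=\mathcal{G}/\mathfrak{G}^{(n)}$. These quotients need not be balanced, but the $\sigma2$-property concerns only the action on $\mathrm{H}^2$, and the Hochschild--Serre collapse together with the same eigenspace-refined relation count — which depends only on $K$ being imaginary quadratic, and not on the Schur condition — gives $\mathrm{H}^2_+\bigl(\mathfrak{G}/\mathfrak{G}^{(n)}\bigr)=0$. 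Hence $\mathfrak{G}$ and every higher $p$-class group with $n\ge 2$ is a $\sigma2$-group.
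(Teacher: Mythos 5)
Your first two steps are sound and reproduce the classical opening of the Koch--Venkov argument: for imaginary quadratic \(K\) and odd \(p\), Theorem~\ref{thm:Shafarevich} with \((r_1,r_2)=(0,1)\), \(r=0\) and \(\theta=0\) (the excluded case \(K=\mathbb{Q}(\sqrt{-3})\), \(p=3\), being trivial) forces \(d_2=d_1\); a lift \(\sigma\) of complex conjugation inverts \(\mathrm{Cl}_p(K)\simeq\mathfrak{G}/\mathfrak{G}^\prime\) because \(\mathfrak{a}\bar{\mathfrak{a}}=(N\mathfrak{a})\) is principal, and inner automorphisms act trivially on cohomology; and the coprime Hochschild--Serre reduction of the \(\sigma2\)-property to \(\mathrm{H}^2(\mathcal{G},\mathbb{F}_p)=0\) with \(\mathcal{G}=\mathrm{Gal}(\mathrm{F}_p^\infty(K)/\mathbb{Q})\) is a correct reformulation. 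Be aware, however, that the paper proves none of this itself: its proof is a citation of Koch--Venkov \cite{KoVe} for the Schur \(\sigma\)-group property and of Schoof \cite{Sf} for the \(\sigma2\)-property of the higher \(p\)-class groups, so you are reconstructing those external arguments --- and the reconstruction has two genuine gaps. The step you yourself call the ``main obstacle'' (that no \(\sigma\)-invariant relation is forced) is only announced, never carried out; worse, the proposed vehicle, an unramified Shafarevich count ``for \(\mathcal{G}\) over \(\mathbb{Q}\)'', is not available, since \(\mathrm{F}_p^\infty(K)/\mathbb{Q}\) is ramified at the primes dividing \(d_K\) and \(\mathcal{G}\) is not pro-\(p\); the actual computation must stay over \(K\) and refine the relation module by the eigenspace idempotents \(\tfrac{1}{2}(1\pm\sigma)\), with genus theory supplying the minus-ness.

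The more serious gap is your final paragraph. The Shafarevich-type bound controls \(\mathrm{H}^2\) of the \emph{full} tower group only, because only for \(\mathfrak{G}\) itself does \(\mathrm{H}^2(\mathfrak{G},\mathbb{F}_p)\) embed into the global cohomology that the unit rank measures. For a proper quotient \(G_n=\mathfrak{G}/\mathfrak{G}^{(n)}\), the inflation map \(\mathrm{H}^2(G_n,\mathbb{F}_p)\to\mathrm{H}^2(\mathfrak{G},\mathbb{F}_p)\) has kernel equal to the transgression image \(\mathrm{Hom}_{\mathfrak{G}}(\mathfrak{G}^{(n)},\mathbb{F}_p)\), which is invisible to any relation count for \(\mathfrak{G}\); the paper itself exhibits (Theorem~\ref{thm:NonMetabelianI2}) a metabelianization \(M\) with \(d_2(M)=4\), exceeding the Shafarevich bound \(2+r\le 3\) for every quadratic field, so ``the same eigenspace-refined relation count'' cannot possibly govern \(\mathrm{H}^2\) of the quotients. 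Nor is the vanishing of the plus part of the transgression module formal group theory: \(\sigma\) acts as \(+1\) on \(\gamma_2(\mathfrak{G})/\gamma_3(\mathfrak{G})\) (commutators of inverted generators are invariant), so inversion on subquotients of \(\mathfrak{G}^\prime\) must come from arithmetic. This is precisely what Schoof's proof supplies: in the five-term exact sequence for \(1\to\mathfrak{G}^{(n)}\to\mathfrak{G}\to G_n\to 1\), the group \(\mathrm{H}^2(G_n,\mathbb{F}_p)\) is an extension of a subspace of \(\mathrm{H}^2(\mathfrak{G},\mathbb{F}_p)\) (already minus) by \(\mathrm{Hom}_{\mathfrak{G}}(\mathfrak{G}^{(n)},\mathbb{F}_p)\), and one shows \(\sigma\) inverts the latter using the identification of the abelianization of \(\mathfrak{G}^{(n)}\) with \(\mathrm{Cl}_p(\mathrm{F}_p^n(K))\) and the arithmetic of the tower. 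Without that tower-level argument, your last step does not go through.
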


\begin{proof}
The first result is due to Koch and Venkov
\cite{KoVe}.
See also the papers by Arrigoni
\cite{Ag}
and Boston, Bush, Hajir
\cite{BBH1},
where a \(\sigma\)-group is called a
group with generator inverting (GI) automorphism.
The second claim was proved by Schoof
\cite{Sf}.
See also Boston, Bush, Hajir
\cite{BBH2},
and our paper
\cite[Dfn. 4.3, p. 83]{Ma2018},
where a \(\sigma2\)-group is called a
group with relator inverting (RI) automorphism.
\end{proof}


\renewcommand{\arraystretch}{1.1}

\begin{table}[ht]
\caption{Step size-\(3\) children of \(R=\langle 243,2\rangle\)}
\label{tbl:Ord243Id2}
\begin{center}
\begin{tabular}{|c|c||c|c|l|}
\hline
 \(i\)  & SmallGroup                 & \(\nu\) & \(\mu\) & \((N_i,C_i)_{1\le i\le\nu}\) \\
\hline
 \(22\) & \(\langle 6561,23\rangle\) & \(3\)   & \(5\)   & \((10,0;22,7;9,9)\)          \\
 \(23\) & \(\langle 6561,24\rangle\) & \(2\)   & \(4\)   & \((2,0;3,0)\)                \\
 \(24\) & \(\langle 6561,25\rangle\) & \(3\)   & \(5\)   & \((7,0;14,5;6,6)\)           \\
 \(25\) & \(\langle 6561,26\rangle\) & \(2\)   & \(4\)   & \((4,0;3,0)\)                \\
 \(26\) & \(\langle 6561,27\rangle\) & \(2\)   & \(4\)   & \((4,0;3,0)\)                \\
 \(27\) & \(\langle 6561,28\rangle\) & \(2\)   & \(4\)   & \((3,0;5,0)\)                \\
\hline
\end{tabular}
\end{center}
\end{table}


\section{Metabelian root of type \((9,9)\)}
\label{s:Metabelian}
\noindent
A crucial non-abelian root of finite \(3\)-groups \(G\) with derived quotient \(G/G^\prime\simeq C_9\times C_9\)
is the metabelian \(3\)-group \(R:=\langle 243,2\rangle\) of order \(243=3^5\).

\begin{proposition}
\label{prp:MetabelianRoot}
\(R\) is a highly capable \(\sigma\)-group with
nuclear rank \(\nu(R)=5\), \(p\)-multiplicator rank \(\mu(R)=5\) and descendant numbers
\((N_i,C_i)_{1\le i\le 5}=(11,2;58,32;58,58;11,11;1,1)\).
Invariants and SmallGroup identifiers
\cite{MAGMA6561}
for several crucial immediate descendants of step size \(s=3\) of \(R\)
are given in Table
\ref{tbl:Ord243Id2}.
\end{proposition}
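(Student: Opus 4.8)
The plan is to recognize this proposition as a certified output of the \(p\)-group generation algorithm applied to the root \(R=\langle 243,2\rangle\), and to organize the verification into an algebraic part (the counts \(N_i\) and the \(\sigma\)-property) and a recursive computational part (the capability counts \(C_i\) and the descendant table). First I would fix a consistent power-commutator presentation of \(R\) on the two generators \(x,y\) of Proposition~\ref{prp:Layers}, confirming \(R/R'\simeq C_9\times C_9\) and \(\lvert R'\rvert=3\), so that \(R\) is metabelian of nilpotency class \(2\) and of a determined \(p\)-class \(c\). Then I construct the \(p\)-covering group \(R^\ast=F/[S,F]S^p\), with \(F\) free pro-\(3\) on \(d_1=2\) generators and \(R=F/S\). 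From \(R^\ast\) I read off the \(p\)-multiplicator \(M=S/[S,F]S^p\simeq\mathbb{F}_3^5\), giving \(\mu(R)=\dim_{\mathbb{F}_3}M=5\), and the nucleus \(P_c(R^\ast)\), giving \(\nu(R)=5\). The key structural point is that \(\nu=\mu\), i.e.\ the nucleus exhausts the whole \(p\)-multiplicator: consequently every proper subspace \(U<M\) is an allowable subgroup, the immediate descendants \(R^\ast/U\) are parametrized by the subspaces of \(M\) of corank \(s\) (\(1\le s\le 5\)) modulo the action of \(\mathrm{Aut}(R)\), and the step size equals the corank.

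Next I would determine the image \(A\) of \(\mathrm{Aut}(R)\) in \(\mathrm{GL}(M)\simeq\mathrm{GL}_5(\mathbb{F}_3)\) and count the \(A\)-orbits on subspaces of each corank; these orbit numbers are precisely the descendant counts \(N_i\). The trivial subspace yields the single descendant of step size \(5\), so \(N_5=1\), while coranks \(1\) and \(4\) count hyperplanes and lines. I expect \(A\) to preserve a nondegenerate form on \(M\), so that orthogonal complementation induces a duality between coranks \(s\) and \(\mu-s\); this explains and cross-checks the observed symmetry \(N_1=N_4=11\) and \(N_2=N_3=58\), and reduces the enumeration to the two values \(N_1=11\) and \(N_2=58\). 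To establish \(R\) as a \(\sigma\)-group in the sense of Definition~\ref{dfn:Schur}, it suffices to verify that the generator-inverting map \(x\mapsto x^{-1}\), \(y\mapsto y^{-1}\) respects the defining relations of \(R\) and hence extends to an automorphism; its induced action on \(\mathrm{H}^1(R,\mathbb{F}_3)\simeq(R/\Phi(R))^\vee\) is then inversion.

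Finally, for the capability counts \(C_i\) and for Table~\ref{tbl:Ord243Id2}, I would apply the algorithm one level deeper: for a representative \(U\) of each of the \(N_i\) orbits I construct the descendant \(R^\ast/U\), recompute its own nucleus, and declare it capable exactly when that nucleus is nonzero; summing the capable representatives per step size gives \((C_i)=(2,32,58,11,1)\), and in particular every descendant of step size \(\ge 3\) turns out to be capable, which expresses that \(R\) is \emph{highly capable}. For the six tabulated step-size-\(3\) children I would match each \(R^\ast/U\) of order \(3^8=6561\) against the SmallGroups identifier \(\langle 6561,j\rangle\) and record its invariants \(\nu,\mu\) together with its own descendant numbers. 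The main obstacle is this last recursive stage: the orbit enumeration over \(\mathbb{F}_3^5\) is a finite and duality-assisted linear-algebra task, but certifying capability requires building and analyzing each descendant (of order up to \(3^{10}\)) and computing its nucleus, where the subtle distinction between capable and terminal groups must be resolved case by case rather than by a uniform argument.
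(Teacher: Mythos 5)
Your proposal takes essentially the same route as the paper: the paper gives no written proof of this proposition at all---its content is certified purely computationally by the Newman--O'Brien \(p\)-group generation algorithm in Magma (cf.\ \cite{Nm,Ob,MAGMA,MAGMA6561} and the analogous computational proofs of Theorems \ref{thm:Trifurcation} and \ref{thm:MetabelianI1})---and your plan is a correct unwinding of exactly what that computation does: \(p\)-covering group, \(\mu(R)=\nu(R)=5\) so the nucleus is the full multiplicator and every proper subspace of \(M\simeq\mathbb{F}_3^5\) is allowable, \(N_i\) as orbit counts of corank-\(i\) subspaces under the induced action of \(\mathrm{Aut}(R)\), the generator-inverting automorphism for the \(\sigma\)-property, and the recursive nucleus test for the capability counts \(C_i\). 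One caution: your duality step (``I expect \(A\) to preserve a nondegenerate form,'' pairing coranks \(s\) and \(5-s\)) is an unproven heuristic---self-duality of the \(\mathrm{Aut}(R)\)-module \(M\) would itself require verification, so the claimed reduction to \(N_1\) and \(N_2\) alone is not justified as stated---but this is harmless here, since the algorithm computes all five orbit counts directly and nothing in the verification depends on that symmetry.
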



\noindent
With respect to \textit{imaginary} quadratic fields of type \((9,9)\),
the immediate descendants \(R-\#3;i\) with step size \(s=3\) of \(R\),
given in Proposition
\ref{prp:MetabelianRoot},
are most important.
Their locations in the descendant tree of \(R\) are drawn in Figure
\ref{fig:RootRegion}.


\begin{figure}[ht]
\caption{Selected descendants of \(R=\langle 243,2\rangle\)}
\label{fig:RootRegion}

{\tiny

\setlength{\unitlength}{0.75cm}
\begin{picture}(10,21)(-6,-20.5)

\put(-8,0.5){\makebox(0,0)[cb]{Order \(3^n\)}}
\put(-8,0){\line(0,-1){18}}
\multiput(-8.1,0)(0,-2){10}{\line(1,0){0.2}}
\put(-8.2,0){\makebox(0,0)[rc]{\(81\)}}
\put(-7.8,0){\makebox(0,0)[lc]{\(3^4\)}}
\put(-8.2,-2){\makebox(0,0)[rc]{\(243\)}}
\put(-7.8,-2){\makebox(0,0)[lc]{\(3^5\)}}
\put(-8.2,-4){\makebox(0,0)[rc]{\(729\)}}
\put(-7.8,-4){\makebox(0,0)[lc]{\(3^6\)}}
\put(-8.2,-6){\makebox(0,0)[rc]{\(2187\)}}
\put(-7.8,-6){\makebox(0,0)[lc]{\(3^7\)}}
\put(-8.2,-8){\makebox(0,0)[rc]{\(6561\)}}
\put(-7.8,-8){\makebox(0,0)[lc]{\(3^8\)}}
\put(-8.2,-10){\makebox(0,0)[rc]{\(19683\)}}
\put(-7.8,-10){\makebox(0,0)[lc]{\(3^9\)}}
\put(-8.2,-12){\makebox(0,0)[rc]{\(59049\)}}
\put(-7.8,-12){\makebox(0,0)[lc]{\(3^{10}\)}}
\put(-8.2,-14){\makebox(0,0)[rc]{\(177147\)}}
\put(-7.8,-14){\makebox(0,0)[lc]{\(3^{11}\)}}
\put(-8.2,-16){\makebox(0,0)[rc]{\(531441\)}}
\put(-7.8,-16){\makebox(0,0)[lc]{\(3^{12}\)}}
\put(-8.2,-18){\makebox(0,0)[rc]{\(1594323\)}}
\put(-7.8,-18){\makebox(0,0)[lc]{\(3^{13}\)}}
\put(-8,-18){\vector(0,-1){2}}


\multiput(-6,0)(0,-2){3}{\circle{0.2}}
\put(-6,0){\circle{0.1}}
\put(-6,0){\line(0,-1){2}}
\put(-6,-2){\line(0,-1){2}}
\put(-6,-4){\circle*{0.1}}

\put(0,-2){\circle{0.2}}

\put(0,-2){\line(-3,-2){6}}
\put(0,-2){\line(-5,-4){5}}
\put(-6,-6){\circle{0.2}}
\put(-5,-6){\circle{0.2}}
\put(-6,-6){\line(0,-1){2}}
\put(-5,-6){\line(0,-1){2}}
\put(-6,-8){\circle{0.2}}
\put(-5,-8){\circle{0.2}}

\put(0,-2){\line(-1,-2){3}}
\put(0,-2){\line(-1,-6){1}}
\put(0,-2){\line(1,-6){1}}
\put(0,-2){\line(1,-2){3}}
\put(0,-2){\line(2,-3){4}}
\put(0,-2){\line(5,-6){5}}
\multiput(-3,-8)(2,0){3}{\circle{0.2}}
\multiput(3,-8)(1,0){3}{\circle{0.2}}

\put(-3,-8){\line(-1,-4){1}}
\put(-3,-8){\line(0,-1){6}}
\put(-1,-8){\line(0,-1){4}}
\put(1,-8){\line(-1,-4){1}}
\put(1,-8){\line(0,-1){6}}
\multiput(3,-8)(1,0){3}{\line(0,-1){4}}

\put(-4,-12){\circle{0.2}}
\put(-1,-12){\circle{0.2}}
\put(-1,-12){\circle*{0.1}}
\put(0,-12){\circle{0.2}}
\put(3,-12){\circle{0.2}}
\put(3,-12){\circle*{0.1}}
\put(4,-12){\circle{0.2}}
\put(4,-12){\circle*{0.1}}
\put(5,-12){\circle{0.2}}
\put(5,-12){\circle*{0.1}}

\put(-3,-14){\circle{0.2}}
\put(1,-14){\circle{0.2}}
\put(-3,-14){\line(0,-1){4}}
\put(1,-14){\line(0,-1){4}}
\put(-3.1,-18.1){\framebox(0.2,0.2){}}
\put(-3,-18){\circle*{0.1}}
\put(0.9,-18.1){\framebox(0.2,0.2){}}
\put(1,-18){\circle*{0.1}}


\put(-5.9,0.1){\makebox(0,0)[lb]{\(\langle 2\rangle\)}}
\put(-5.9,-1.9){\makebox(0,0)[lb]{\(\langle 11\rangle\)}}
\put(-5.9,-4.1){\makebox(0,0)[lt]{\(\langle 60\rangle\)}}

\put(0.1,-1.9){\makebox(0,0)[lb]{\(\langle 2\rangle=R\)}}

\put(-6.1,-5.9){\makebox(0,0)[rb]{\(\langle 21\rangle\)}}
\put(-5.0,-5.9){\makebox(0,0)[rb]{\(\langle 22\rangle\)}}
\put(-6.1,-8.1){\makebox(0,0)[rt]{\(\langle 383\rangle\)}}
\put(-5.0,-8.1){\makebox(0,0)[rt]{\(\langle 384\rangle\)}}

\put(-3.1,-7.9){\makebox(0,0)[rb]{\(\langle 23\rangle\)}}
\put(-1.1,-7.9){\makebox(0,0)[rb]{\(\langle 24\rangle\)}}
\put(0.9,-7.9){\makebox(0,0)[rb]{\(\langle 25\rangle\)}}
\put(3.1,-7.9){\makebox(0,0)[lb]{\(\langle 26\rangle\)}}
\put(4.1,-7.9){\makebox(0,0)[lb]{\(\langle 27\rangle\)}}
\put(5.1,-7.9){\makebox(0,0)[lb]{\(\langle 28\rangle\)}}

\put(-4.1,-12.1){\makebox(0,0)[rt]{\(1..2\)}}
\put(-1.1,-12.1){\makebox(0,0)[rt]{\(-\#2;1..3\)}}
\put(0.1,-12.1){\makebox(0,0)[lt]{\(1..2\)}}
\put(3.1,-12.1){\makebox(0,0)[lt]{\(1..3\)}}
\put(4.1,-12.1){\makebox(0,0)[lt]{\(1..3\)}}
\put(5.1,-12.1){\makebox(0,0)[lt]{\(1..5\)}}

\put(-3.1,-13.9){\makebox(0,0)[rb]{\(-\#3;9\)}}
\put(0.9,-13.9){\makebox(0,0)[rb]{\(-\#3;6\)}}
\put(-3.1,-17.9){\makebox(0,0)[rb]{\(-\#2;1..3\)}}
\put(0.9,-17.9){\makebox(0,0)[rb]{\(-\#2;1..3\)}}


\put(-6,-19){\makebox(0,0)[lc]{Legend:}}
\put(-4,-19){\circle{0.2}}
\put(-4,-19){\circle{0.1}}
\put(-3.8,-19){\makebox(0,0)[lc]{abelian}}
\put(-4,-19.5){\circle{0.2}}
\put(-3.8,-19.5){\makebox(0,0)[lc]{metabelian,}}
\put(0,-19.5){\circle{0.2}}
\put(0,-19.5){\circle*{0.1}}
\put(0.2,-19.5){\makebox(0,0)[lc]{metabelian with \(d_2=2\)}}
\put(-4.1,-20.1){\framebox(0.2,0.2){}}
\put(-3.8,-20){\makebox(0,0)[lc]{non-metabelian,}}
\put(-0.1,-20.1){\framebox(0.2,0.2){}}
\put(0,-20){\circle*{0.1}}
\put(0.2,-20){\makebox(0,0)[lc]{non-metabelian with \(d_2=2\)}}

\end{picture}
}
\end{figure}


\section{Periodic trifurcations}
\label{s:Multifurcation}
\noindent

\begin{theorem}
\label{thm:Trifurcation}
(Tree with periodic trifurcations and parametrized sequences of Schur \(\sigma\)-groups) \\
For any upper bound \(u\le 30\),
there exists a sequence \((V_i)_{0\le i\le u}\) of \(\sigma\)-groups
with nuclear rank \(\nu(V_i)=3\) such that \((\forall\,0\le i<u)\)
\begin{enumerate}
\item
\(V_i=\pi_3(V_{i+1})\) is parent of \(V_{i+1}\) with step size \(s=3\),
\(V_{i+1}\) is the \textbf{unique} \(\sigma\)-child of \(V_i\) with step size \(s=3\)
and nuclear rank \(\nu(V_{i+1})=3\),
and
\item
\(V_i\) possesses a \textbf{unique} \(\sigma\)-child \(S_{i+1}\) with step size \(s=3\), \(V_i=\pi_3(S_{i+1})\),
with nuclear rank \(\nu(S_{i+1})=2\) and descendant numbers \((N_2,C_2)=(3,0)\).
\(S_{i+1}\) is a sibling of \(V_{i+1}\).
\end{enumerate}
All three immediate descendants \(T_{i+1,j}\), \(1\le j\le 3\), of \(S_{i+1}\) with step size \(s=2\), \(S_{i+1}=\pi_2(T_{i+1,j})\),
are non-metabelian Schur \(\sigma\)-groups \(T_{i+1,j}=S_{i+1}-\#2;j\), \(1\le j\le 3\).

The minimal possible selection of the root is
either \(V_0=\langle 243,2\rangle-\#3;22=\langle 6561,23\rangle\)
or \(V_0=\langle 243,2\rangle-\#3;24=\langle 6561,25\rangle\).

For this minimal selection,
the metabelianization of the non-metabelian Schur \(\sigma\)-groups \(T_{i+1,j}=S_{i+1}-\#2;j\), \(1\le j\le 3\),
is given by
\begin{equation}
\label{eqn:Metabelianization}
T_{i+1,j}/T_{i+1,j}^{\prime\prime}\simeq
\begin{cases}
S_{i+1} & \text{ if } i=0, \\
M & \text{ if } i\ge 1,
\end{cases}
\end{equation}
for a fixed step size-\(1\) child \(M\) of \(V_1\).

The metabelianization \(M\) is given by
\(V_1-\#1;4\) for the selection \(V_0=\langle 6561,23\rangle\),
resp. \(V_1-\#1;3\) for the selection \(V_0=\langle 6561,25\rangle\).
See Figure
\ref{fig:Trifurcations}.
\end{theorem}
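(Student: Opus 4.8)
The plan is to prove the theorem by anchoring the periodic structure with explicit \(p\)-group generation computations near the bottom of the tree and then propagating it toward larger order by a structural recursion, since the orders \(3^{8+3i}\) of the mainline vertices \(V_i\) soon exceed the reach of direct computation as \(i\) approaches \(u\).

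First I would treat the base case. Starting from the chosen root \(V_0\in\{\langle 6561,23\rangle,\langle 6561,25\rangle\}\), both step size-\(3\) descendants of \(R=\langle 243,2\rangle\) recorded in Table \ref{tbl:Ord243Id2}, I would run the \(p\)-group generation algorithm to produce the immediate descendants together with their nuclear ranks, \(p\)-multiplicator ranks, step sizes, and descendant numbers. Restricting to the step size-\(3\) children that admit a generator-inverting automorphism, I would verify that exactly one such \(\sigma\)-child has nuclear rank \(3\), which defines \(V_1\), and exactly one has nuclear rank \(2\) with descendant numbers \((N_2,C_2)=(3,0)\), which defines \(S_1\); uniqueness is a finite check against the computed list. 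For \(S_1\) I would enumerate its three step size-\(2\) descendants \(T_{1,j}=S_1-\#2;j\) and confirm for each that \(T_{1,j}^{\prime\prime}\ne 1\) (non-metabelian), that the relation rank satisfies \(d_2=d_1=2\) (the Schur property), and that a generator-inverting automorphism exists (the \(\sigma\)-property). Computing \(T_{1,j}/T_{1,j}^{\prime\prime}\) and matching it with \(S_1\) then settles the case \(i=0\) of \eqref{eqn:Metabelianization}.

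The heart of the argument is the periodicity. Since \(V_{i+1}\) is itself a \(\sigma\)-group of nuclear rank \(3\), the task is to show that it reproduces the very configuration that \(V_0\) exhibits, i.e. that the parent map \(\pi_3\) and the descendant operation commute along the mainline, so that the finite slice of the tree hanging below \(V_i\) down to the leaves \(T_{i+1,j}\) is isomorphic to the corresponding slice below \(V_{i+1}\). I would establish this by exhibiting compatible power-commutator presentations of \(V_i\) and \(V_{i+1}\) in which the generators and relations controlling the relevant quotients recur periodically, whence the nuclear rank, \(p\)-multiplicator rank, step-size distribution, and \(\sigma\)-structure are preserved under \(V_i\mapsto V_{i+1}\). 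Once the local invariants are shown to agree up to the depth that governs the trifurcation, the recurrence of the whole pattern for every \(0\le i<u\) follows by induction; the bound \(u\le 30\) then records the range over which the anchoring computations and the presentation-compatibility have been carried through.

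It remains to pin down the metabelianization for \(i\ge 1\). The point is that along the periodic mainline the Artin pattern is constant, so the largest metabelian quotient \(T_{i+1,j}/T_{i+1,j}^{\prime\prime}\) stabilizes and depends neither on \(i\) nor on \(j\); I would identify this common quotient with the fixed step size-\(1\) child \(M\) of \(V_1\) — namely \(V_1-\#1;4\) for the root \(\langle 6561,23\rangle\), respectively \(V_1-\#1;3\) for \(\langle 6561,25\rangle\) — by comparing presentations. The same comparison explains why the value at \(i=0\) is the smaller group \(S_1\) rather than \(M\): the first level lies at the boundary of the periodic region, where the second derived subgroup has not yet reached its stable size. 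The main obstacle is precisely this periodicity step: checking the base case and the explicit metabelianizations is laborious but mechanical, whereas propagating the trifurcation to all \(i\le u\) cannot be accomplished by direct computation at the enormous orders involved and instead requires the structural commutation of descent with the descendant operation along the mainline.
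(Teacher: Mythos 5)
Your base-case computations match what the paper actually does, but the self-declared ``heart'' of your argument --- the structural periodicity step --- is a genuine gap. You assert that compatible power-commutator presentations of \(V_i\) and \(V_{i+1}\) can be exhibited in which the data governing the trifurcation recur, ``whence'' the whole configuration propagates by induction; but you never construct such presentations, nor explain why the nuclear rank, the \(\sigma\)-structure and the descendant numbers of the step size-\(3\) children are controlled by finitely much presentation data that is visibly periodic along the mainline. No periodicity theorem of this kind is known for descendant trees with multifurcations, and this is exactly why the paper states the theorem only for \(u\le 30\) and relegates \(u>30\) to Conjecture \ref{cnj:Trifurcation}. If your induction step could actually be carried out, it would prove that conjecture and make the hypothesis \(u\le 30\) superfluous; its presence in the statement is the signal that the result rests on exhaustive computation, not on a structural recursion.

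The premise that pushes you toward this unproven step --- that direct computation ``cannot be accomplished at the enormous orders involved'' --- is also mistaken. The paper's proof is precisely a direct Magma verification of all claims up to \(u=30\) and logarithmic order \(100\), i.e.\ for all descendants of order at most \(3^{100}\); note that \(\lvert V_{30}\rvert = 3^{8+3\cdot 30}=3^{98}\) lies within this range. The \(p\)-group generation algorithm operates on power-commutator presentations, whose cost scales with the composition length (here about \(100\)) rather than with the magnitude of the group order as an integer, so iterating the parent-to-children computation along the mainline for thirty steps is feasible and is what was actually done. Your treatment of the metabelianization for \(i\ge 1\) has the same character: the heuristic that ``the Artin pattern is constant along the mainline, so the largest metabelian quotient stabilizes'' is offered in place of a proof, with no argument ruling out that \(T_{i+1,j}/T_{i+1,j}^{\prime\prime}\) drifts with \(i\) instead of locking onto the fixed child \(V_1-\#1;4\) resp.\ \(V_1-\#1;3\); in the paper this identification, like everything else, is a computed fact within the verified range.
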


\begin{proof}
All claims have been verified with the aid of Magma
\cite{MAGMA}
up to \(u=30\) and logarithmic order \(100\),
that is, for all descendants of orders not exceeding \(3^{100}\).
\end{proof}


\begin{corollary}
\label{cor:Trifurcation}
\(S_1\) is the metabelianization with \textbf{child topology}
of \(T_{1,j}\) with \(1\le j\le 3\), and
\(M\) is the common metabelianization with \textbf{fork topology}
of all \(T_{i,j}\) with \(i\ge 2\) and \(1\le j\le 3\).
The common \textbf{fork} for all \(i\ge 2\) is \(V_1\).
(These concepts have been introduced in
\cite[\S\ 5, pp. 89--92]{Ma2016b}.)
\end{corollary}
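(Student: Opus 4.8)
The plan is to extract both statements directly from the metabelianization formula \eqref{eqn:Metabelianization} of Theorem \ref{thm:Trifurcation} and to recast them in the tree-topological language of \cite[\S\ 5, pp.~89--92]{Ma2016b}. Recall that a non-metabelian vertex \(T\) exhibits \emph{child topology} relative to its metabelianization \(T/T''\) when \(T/T''\) is isomorphic to the parent of \(T\), so that \(T\) is a child of its own metabelianization; it exhibits \emph{fork topology} when \(T/T''\) is isomorphic instead to a vertex lying off the ancestral line of \(T\), the two branches diverging at a common ancestor \(F\), the \emph{fork}, by way of a multifurcation.

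First I would treat the child-topology assertion by specializing \eqref{eqn:Metabelianization} to \(i=0\), which yields \(T_{1,j}/T_{1,j}''\simeq S_1\) for each \(1\le j\le 3\). Since \(T_{1,j}=S_1-\#2;j\) is a step size-\(2\) immediate descendant, its parent is \(\pi_2(T_{1,j})=S_1\); concretely, with \(\lvert V_0\rvert=3^8\) one has \(\lvert S_1\rvert=3^{11}\) and \(\lvert T_{1,j}\rvert=3^{13}\), so the last nontrivial lower \(3\)-central term of \(T_{1,j}\), of order \(3^2\), coincides with \(T_{1,j}''\). Thus the metabelianization agrees with the parent, which is exactly the defining condition for child topology.

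For the fork-topology assertion I would specialize \eqref{eqn:Metabelianization} to \(i\ge 1\), i.e.\ to the vertices \(T_{i,j}\) with \(i\ge 2\), giving \(T_{i,j}/T_{i,j}''\simeq M\), where \(M=V_1-\#1;4\) (resp.\ \(V_1-\#1;3\)) is a step size-\(1\) child of \(V_1\). The key step is to locate \(V_1\) on every ancestral line: from part~(1) of Theorem \ref{thm:Trifurcation} the chain of step size-\(3\) parent relations \(V_1=\pi_3(V_2),\dots,V_{i-2}=\pi_3(V_{i-1})\), together with \(V_{i-1}=\pi_3(S_i)\) and \(S_i=\pi_2(T_{i,j})\), shows that the ancestral line of each \(T_{i,j}\) passes through \(V_1\). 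An order count then confirms the fork: with \(\lvert V_1\rvert=3^{11}\) the ancestral line continues from \(V_1\) by a step size-\(3\) descent to order \(3^{14}\) (to \(S_2\) when \(i=2\), otherwise to \(V_2\)), skipping the order \(3^{12}=\lvert M\rvert\); hence \(M\) cannot lie on the ancestral line but branches off \(V_1\) along step size \(1\), a divergence made possible by the multifurcation at the vertex \(V_1\) of nuclear rank \(\nu(V_1)=3\). This is precisely fork topology, with the single fork \(V_1\) serving all \(i\ge 2\).

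The main obstacle I anticipate is purely one of careful bookkeeping at the fork: one must verify that \(M\) is genuinely forked off \(V_1\) rather than lying on the ancestral line, and that \(V_1\) is the \emph{deepest} such common ancestor, so that the fork is not inadvertently ascribed to \(V_0\) or lower. Both points reduce to the step-size and order data already furnished by Theorem \ref{thm:Trifurcation} — \(M\) has step size \(1\) over \(V_1\) whereas the ancestral continuation has step size \(3\), and the uniqueness of the step size-\(3\) \(\sigma\)-child in part~(1) forces the two divergent branches to be distinct — so that no computation beyond that underlying Theorem \ref{thm:Trifurcation} is required.
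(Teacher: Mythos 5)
Your proposal is correct and takes essentially the same route as the paper, which offers no separate proof of the corollary: it is read off directly from the metabelianization formula \eqref{eqn:Metabelianization} of Theorem \ref{thm:Trifurcation} together with the parent relations \(V_i=\pi_3(S_{i+1})\), \(S_{i+1}=\pi_2(T_{i+1,j})\), \(V_i=\pi_3(V_{i+1})\), reformulated in the topological terminology of \cite[\S\ 5, pp. 89--92]{Ma2016b}, exactly as you do (your order bookkeeping \(\lvert V_1\rvert=3^{11}\), \(\lvert M\rvert=3^{12}\), next ancestral vertex of order \(3^{14}\) agrees with Figure \ref{fig:Trifurcations} and correctly pins the fork at \(V_1\)). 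One small caveat: your identification of \(T_{1,j}^{\prime\prime}\) with the last nontrivial lower exponent-\(3\) central term does not follow from order counting alone (equal orders and isomorphic quotients do not force equality of the two kernels), but it is also superfluous, since child topology only requires the metabelianization to be isomorphic to the parent, which \eqref{eqn:Metabelianization} with \(i=0\) gives outright.
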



\begin{conjecture}
\label{cnj:Trifurcation}
Theorem
\ref{thm:Trifurcation}
remains true for any \(u>30\).
\end{conjecture}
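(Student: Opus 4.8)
The plan is to upgrade the finite verification of Theorem \ref{thm:Trifurcation} (valid for \(u\le 30\)) to an unconditional statement by replacing the level-by-level enumeration of descendants with a structural \emph{self-similarity} argument for the descendant tree of the root \(V_0\). Since each step \(V_i=\pi_3(V_{i+1})\) has step size \(s=3\), the coclass increases by \(2\) at every level, so the sequence \((V_i)\) does not lie in a single coclass tree; the periodicity to be established is the stronger phenomenon of \emph{periodic multifurcation} across a nested family of coclass trees. The first task is therefore to isolate a finite \emph{motif}: the \(3\)-covering group \(G^\ast(V_i)\), together with the decomposition of its \(p\)-multiplicator into nucleus and remaining allowable part, and the induced action of \(\mathrm{Aut}(V_i)\) on \(\mathrm{H}^2(V_i,\mathbb{F}_3)\). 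I would compute this motif explicitly at the first few levels and conjecture, guided by the Magma data, that it stabilizes up to a fixed isomorphism from some level \(i_0\) onward.

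The core of the argument is an inductive step establishing self-reproduction: assuming the motif at \(V_i\) is isomorphic to a fixed model, I would show that the \(p\)-group generation algorithm applied to \(V_i\) produces exactly the claimed children — the unique nuclear-rank-\(3\) \(\sigma\)-child \(V_{i+1}\), the unique nuclear-rank-\(2\) \(\sigma\)-child \(S_{i+1}\) with \((N_2,C_2)=(3,0)\), and the three non-metabelian Schur \(\sigma\)-descendants \(T_{i+1,j}\) of \(S_{i+1}\) — and, crucially, that the motif at \(V_{i+1}\) is again isomorphic to the fixed model. Concretely this requires (i) identifying the allowable subgroups of the \(p\)-multiplicator that are invariant under the \(\sigma\)-automorphism, since by Definition \ref{dfn:Schur} and Theorem \ref{thm:Schur} only \(\sigma\)-invariant quotients can carry the generator-inverting automorphism; (ii) verifying that the generator rank remains \(d_1=2\) and the relation rank remains \(d_2=2\) along the whole sequence, which is precisely the balanced-presentation condition needed for the \(T_{i+1,j}\) to remain admissible as \(3\)-class field tower groups of imaginary quadratic fields; and (iii) transporting the metabelianization formula \eqref{eqn:Metabelianization} through the fork at \(V_1\), so that the common metabelianization \(M\) of Corollary \ref{cor:Trifurcation} is preserved at every level \(i\ge 2\).

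I expect the main obstacle to be step (i) together with the persistence of the \(\sigma\)-automorphism: one must control the full automorphism group \(\mathrm{Aut}(V_i)\), or at least its image on the \(p\)-multiplicator, uniformly in \(i\), and prove that the generator-inverting automorphism lifts through each trifurcation rather than being destroyed at some large level. A promising route to this uniform control is to pass to the inverse limit \(V_\infty=\varprojlim V_i\), a pro-\(3\) group whose lower \(3\)-central quotients recover the mainline \((V_i)\); if \(V_\infty\) can be identified with a concrete \(3\)-adic analytic, or otherwise finitely presented, pro-\(3\) group admitting a global inversion automorphism, then the stabilization of the motif and the survival of \(\sigma\) would follow from properties of \(V_\infty\) rather than from unbounded computation. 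A secondary difficulty is ensuring that the three step-size-\(2\) descendants of \(S_{i+1}\) remain \emph{non-metabelian} for all \(i\); this amounts to showing that the second derived quotient never collapses, which I would address by tracking \(V_i^{\prime\prime}\) — equivalently \(\mathrm{Cl}_3(\mathrm{F}_3^1)\) via Theorem \ref{thm:ArtinReciprocity} — as part of the motif and proving that it grows in lockstep with the order.
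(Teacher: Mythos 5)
The statement you are trying to prove is stated in the paper as a \emph{conjecture}, and deliberately so: the paper's only justification for Theorem \ref{thm:Trifurcation} is a Magma computation carried out up to \(u=30\) and logarithmic order \(100\), and Conjecture \ref{cnj:Trifurcation} is precisely the assertion that this computationally observed periodicity persists forever. There is no proof in the paper to compare against, so the relevant question is whether your argument would actually close the gap the author left open. It would not, because its central step is assumed rather than proved: you explicitly say you would compute the ``motif'' (the \(3\)-covering group, the nucleus/allowable decomposition of the \(3\)-multiplicator, and the \(\mathrm{Aut}(V_i)\)-action on \(\mathrm{H}^2(V_i,\mathbb{F}_3)\)) at small levels and then ``conjecture, guided by the Magma data, that it stabilizes'' from some \(i_0\) onward. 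That stabilization claim is logically equivalent in strength to Conjecture \ref{cnj:Trifurcation} itself --- if the motif provably reproduces, the tree structure follows, and conversely the conjectured tree structure is exactly what the finite data suggests. So the proposal replaces one unproved periodicity statement by another, and the inductive ``self-reproduction'' step is never carried out.

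The obstacles you name are the right ones, which is to your credit, but naming them is not overcoming them. The known periodicity theorems for descendant trees (Eick--Leedham-Green style graph isomorphisms) operate \emph{within} a fixed coclass tree, whereas here every step \(V_i\to V_{i+1}\) has step size \(s=3\) and the coclass grows without bound (the paper records coclass \(5+3i\) for the groups \(T_{i,k}\)); periodic multifurcation across infinitely many coclass trees is exactly the regime where no general theorem is available, and your sketch supplies no substitute mechanism. Likewise, the survival of the generator-inverting automorphism through each trifurcation, the uniform control of \(\mathrm{Aut}(V_i)\) on the multiplicator, the identification of the inverse limit \(V_\infty\) as a concrete finitely presented or analytic pro-\(3\) group, and the non-collapse of \(V_i^{\prime\prime}\) are all flagged as ``expected obstacles'' or ``promising routes'' without even a candidate lemma. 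As a research program your outline is sensible and close in spirit to how such conjectures are attacked (finite motif plus inductive cone construction, or a global pro-\(3\) object), but as a proof it has a genuine gap at every load-bearing joint, beginning with the stabilization hypothesis on which everything else depends.
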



\begin{figure}[ht]
\caption{Periodic trifurcations and Schur \(\sigma\)-groups}
\label{fig:Trifurcations}

{\tiny

\setlength{\unitlength}{0.75cm}
\begin{picture}(10,11)(-6,-10.5)

\put(-8,0.5){\makebox(0,0)[cb]{Order \(3^n\)}}
\put(-8,0){\line(0,-1){9}}
\multiput(-8.1,0)(0,-1){10}{\line(1,0){0.2}}
\put(-8.2,0){\makebox(0,0)[rc]{\(6561\)}}
\put(-7.8,0){\makebox(0,0)[lc]{\(3^8\)}}
\put(-8.2,-1){\makebox(0,0)[rc]{\(19683\)}}
\put(-7.8,-1){\makebox(0,0)[lc]{\(3^9\)}}
\put(-8.2,-2){\makebox(0,0)[rc]{\(59049\)}}
\put(-7.8,-2){\makebox(0,0)[lc]{\(3^{10}\)}}
\put(-8.2,-3){\makebox(0,0)[rc]{\(177147\)}}
\put(-7.8,-3){\makebox(0,0)[lc]{\(3^{11}\)}}
\put(-8.2,-4){\makebox(0,0)[rc]{\(531441\)}}
\put(-7.8,-4){\makebox(0,0)[lc]{\(3^{12}\)}}
\put(-8.2,-5){\makebox(0,0)[rc]{\(1594323\)}}
\put(-7.8,-5){\makebox(0,0)[lc]{\(3^{13}\)}}
\put(-8.2,-6){\makebox(0,0)[rc]{\(4782969\)}}
\put(-7.8,-6){\makebox(0,0)[lc]{\(3^{14}\)}}
\put(-8.2,-7){\makebox(0,0)[rc]{\(14348907\)}}
\put(-7.8,-7){\makebox(0,0)[lc]{\(3^{15}\)}}
\put(-8.2,-8){\makebox(0,0)[rc]{\(43046721\)}}
\put(-7.8,-8){\makebox(0,0)[lc]{\(3^{16}\)}}
\put(-8.2,-9){\makebox(0,0)[rc]{\(129140163\)}}
\put(-7.8,-9){\makebox(0,0)[lc]{\(3^{17}\)}}
\put(-8,-9){\vector(0,-1){1}}

\put(0,-9){\vector(2,-3){0.5}}
\put(0.6,-9.8){\makebox(0,0)[lc]{infinite main trunk}}


\put(-6,0){\circle{0.2}}
\put(-6,0){\line(0,-1){1}}
\put(-6,-1){\circle{0.2}}
\put(-6,0){\line(1,-2){1}}
\put(-5,-2){\circle{0.2}}
\put(-6,0){\line(2,-3){2}}
\put(-4,-3){\circle{0.2}}
\put(-6,0){\line(2,-1){6}}
\put(0,-3){\circle{0.2}}
\put(0,-3){\line(2,-1){4}}
\put(3.9,-5.1){\framebox(0.2,0.2){}}
\put(4,-5){\circle*{0.1}}


\put(-4,-3){\line(0,-1){1}}
\put(-4.1,-4.1){\framebox(0.2,0.2){}}
\put(-4,-3){\line(1,-1){1}}
\put(-3,-4){\circle{0.2}}
\put(-4,-3){\line(1,-2){1}}
\put(-3.1,-5.1){\framebox(0.2,0.2){}}
\put(-4,-3){\line(2,-3){2}}
\put(-2.1,-6.1){\framebox(0.2,0.2){}}
\put(-4,-3){\line(2,-1){6}}
\put(1.9,-6.1){\framebox(0.2,0.2){}}
\put(2,-6){\line(2,-1){4}}
\put(5.9,-8.1){\framebox(0.2,0.2){}}
\put(6,-8){\circle*{0.1}}


\put(-2,-6){\line(0,-1){1}}
\put(-2.1,-7.1){\framebox(0.2,0.2){}}
\put(-2,-6){\line(1,-2){1}}
\put(-1.1,-8.1){\framebox(0.2,0.2){}}
\put(-2,-6){\line(2,-3){2}}
\put(-0.1,-9.1){\framebox(0.2,0.2){}}
\put(-2,-6){\line(2,-1){6}}
\put(3.9,-9.1){\framebox(0.2,0.2){}}


\put(-6.3,0.1){\makebox(0,0)[rb]{\(V_0\)}}
\put(-4.3,-2.9){\makebox(0,0)[rb]{\(V_1\)}}
\put(-2.3,-5.9){\makebox(0,0)[rb]{\(V_2\)}}
\put(-0.3,-8.9){\makebox(0,0)[rb]{\(V_3\)}}

\put(-2.8,-4.1){\makebox(0,0)[lt]{\(M\)}}

\put(0.2,-2.9){\makebox(0,0)[lb]{\(S_1\)}}
\put(2.2,-5.9){\makebox(0,0)[lb]{\(S_2\)}}
\put(4.2,-8.9){\makebox(0,0)[lb]{\(S_3\)}}

\put(4.2,-5.1){\makebox(0,0)[lt]{\(T_{1,1..3}\)}}
\put(6.2,-8.1){\makebox(0,0)[lt]{\(T_{2,1..3}\)}}


\put(-2,0){\makebox(0,0)[lc]{Legend:}}

\put(-2,-0.5){\circle{0.2}}
\put(-1.8,-0.5){\makebox(0,0)[lc]{metabelian,}}

\put(-2.1,-1.1){\framebox(0.2,0.2){}}
\put(-1.8,-1){\makebox(0,0)[lc]{non-metabelian,}}
\put(1.9,-1.1){\framebox(0.2,0.2){}}
\put(2,-1){\circle*{0.1}}
\put(2.2,-1){\makebox(0,0)[lc]{non-metabelian with \(d_2=2\)}}

\end{picture}
}
\end{figure}

\noindent
In these descendant trees,
derived length \(\mathrm{dl}=3\) sets in partially with order \(3^{12}\), completely with order \(3^{13}\),
\(\mathrm{dl}=4\) partially with order \(3^{27}\), completely with order \(3^{29}\),
and \(\mathrm{dl}=5\) partially with order \(3^{60}\), completely with order \(3^{61}\).


\section{Capitulation laws}
\label{s:Capitulation}
\noindent
With \(p\) a prime number,
let \(G\) be a finite \(p\)-group.


\begin{definition}
\label{dfn:Harmonic}
Let \(\lbrace S_1,\ldots,S_n\rbrace\) be the set of intermediate subgroups \(G^\prime\le S\le G\)
between \(G\) and its derived subgroup \(G^\prime\),
let \(T_{G,S_i}:\,G/G^\prime\to S_i/S_i^\prime\) be
the collection of Artin transfers from \(G\) to \(S_i\), \(1\le i\le n\),
and denote by \(\mathfrak{S}_n\) the symmetric group on \(n\) letters.
The group \(G\) is said to possess \textit{harmonically balanced capitulation}
if there exists some permutation \(\pi\in\mathfrak{S}_n\) such that
\begin{equation}
\label{eqn:Harmonic}
(\forall 1\le i\le n)\quad\ker(T_{G,S_i})=S_{\pi(i)}/G^\prime, \text{ abbreviated by } \ker(T_{G,S_i})=S_{\pi(i)}.
\end{equation}
\end{definition}


\begin{remark}
\label{rmk:Arithmetic}
We shall restrict our attention to groups \(G\)
with \textit{arithmetical capitulation}
satisfying the following two axioms
\((\forall S\le G)\) (\(\ker(T_{G,S})=G^\prime\) \(\Longrightarrow\) \(S=G\)) and \(\ker(T_{G,G^\prime})=G\),
motivated by Hilbert's Theorem 94
and the Artin-Furtw\"angler principal ideal theorem.
\end{remark}


\begin{example}
\label{rmk:Elementary}
In
\cite{Ma2012}
and
\cite{Ma2011},
we thoroughly investigated the capitulation over quadratic fields \(K\)
with elementary bicyclic \(p\)-class group \(\mathrm{Cl}_p(K)\simeq C_p\times C_p\) for \(p=3\).
In
\cite{Ma2016b},
we looked at the analogue for \(p\in\lbrace 5,7\rbrace\).
Since arithmetical capitulation requires the mandatory transposition \(G\leftrightarrow G^\prime\),
harmonically balanced capitulation can be described by a permutation \(\pi\in\mathfrak{S}_{p+1}\)
of the \(p+1\) proper intermediate subgroups \(G^\prime<S_1,\ldots,S_{p+1}<G\).

For \(p=3\),
only two permutations \(\pi\in\mathfrak{S}_4\) are admissible.
They were called capitulation type \\
\(\mathrm{G}.16\), \(\varkappa=(1243)\) (two fixed points and a transposition), and \\
\(\mathrm{G}.19\), \(\varkappa=(2143)\) (two disjoint transpositions),
\cite[Tbl. 3--4, pp. 497--498]{Ma2012}, \\
and they enforce a \(3\)-class field tower with at least three stages, \(\ell_3(K)\ge 3\)
\cite[Tbl. 1, p. 90]{Ma2016b}.

For \(p=5\), however,
a lot of permutations \(\pi\in\mathfrak{S}_6\) can occur
\cite[Tbl. 3, p. 94]{Ma2016b},
for instance five cases which admit
a metabelian \(5\)-class field tower with two stages
\cite[Thm. 7.4, pp. 96-97]{Ma2016b},
the identity with six fixed points,
a \(4\)-cycle with two fixed points,
a \(5\)-cycle with a single fixed point,
two disjoint \(3\)-cycles, and
a \(6\)-cycles.
The latter two permutations are fixed point free,
just as type \(\mathrm{G}.19\) for \(p=3\),
which is described by Scholz with the following allegory:
\lq\lq So wie es sich f\"ur wohlerzogene B\"urger geziemt:
jeder l\"a\ss t einem anderen den Vortritt
aber jeder kommt auch selbst heran\rq\rq
\cite[pp. 155--156]{LmRq}.

Similarly, for \(p=7\)
there are also many admissible permutations \(\pi\in\mathfrak{S}_8\)
\cite[Tbl. 4, p. 95]{Ma2016b}.
\end{example}


\noindent
Not every permutation \(\pi\in\mathfrak{S}_n\) will be admissible in Formula
\eqref{eqn:Harmonic},
because there exist some general laws which have to be obeyed by the capitulation.
We state these laws for the special case of the normal lattice in \S\
\ref{s:NormalLattice}.
So let \(G\) be a finite \(3\)-group with
non-elementary bihomocyclic \(G/G^\prime\simeq C_9\times C_9\).


\begin{proposition}
\label{prp:Inclusion}
The capitulation kernels satisfy the following inclusions:
\begin{enumerate}
\item
\((\forall 1\le i\le 4)\,(\forall 1\le k\le 3)\quad G^\prime<\ker(T_{G,H_i})\le\ker(T_{G,J_{ik}})\le\ker(T_{G,K_i})\le G\),
\item
\((\forall 1\le i,j\le 4)\quad G^\prime<\ker(T_{G,H_i})\le\ker(T_{G,J_0})\le\ker(T_{G,K_j})\le G\).
\end{enumerate}
\end{proposition}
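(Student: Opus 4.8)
The plan is to derive every inclusion from a single functorial property of the Artin transfer, combined with the containment relations of the normal lattice recorded in Proposition \ref{prp:Layers} and drawn in Figure \ref{fig:NormalLattice}. The property I would invoke is the transitivity of the transfer: whenever \(G^\prime\le U\le V\le G\), the transfers compose as \(T_{G,U}=T_{V,U}\circ T_{G,V}\), with \(T_{V,U}\colon V/V^\prime\to U/U^\prime\). Its immediate consequence is the \emph{monotonicity of transfer kernels}: if \(\alpha\in\ker(T_{G,V})\), then \(T_{G,U}(\alpha)=T_{V,U}(1)=1\), whence
\[
\ker(T_{G,V})\le\ker(T_{G,U})\qquad\text{whenever }U\le V.
\]
In words, enlarging the target subgroup can only shrink the capitulation kernel.

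First I would read off the relevant chains from the normal lattice. Part \(i\) of Figure \ref{fig:NormalLattice} exhibits, for each fixed \(1\le i\le 4\) and \(1\le k\le 3\), the chains
\[
G^\prime<K_i<J_{ik}<H_i<G\qquad\text{and}\qquad G^\prime<K_j<J_0<H_i<G,
\]
the edges \(K_i\le J_{ik}\), \(J_{ik}\le H_i\), \(K_j\le J_0\) and \(J_0\le H_i\) being exactly those drawn there and confirmed algebraically by the generator descriptions of Proposition \ref{prp:Layers} (for instance \(x^3,y^3\in J_0\) forces \(K_j\le J_0\), while \(J_0=\Phi(G)\le H_i\) because \(J_0\) is the intersection of all maximal subgroups). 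For assertion (2) the crucial observation is that \(J_0\le H_i\) holds for \emph{every} index \(i\) and \(K_j\le J_0\) for \emph{every} index \(j\), which is precisely what lets \(i\) and \(j\) range independently.

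Next I would apply the monotonicity inequality along each chain. For assertion (1) the chain \(K_i<J_{ik}<H_i\) yields \(\ker(T_{G,H_i})\le\ker(T_{G,J_{ik}})\le\ker(T_{G,K_i})\) directly, while the outer bounds \(G^\prime\le\ker(T_{G,H_i})\) and \(\ker(T_{G,K_i})\le G\) are automatic, since every transfer kernel is a subgroup of \(G/G^\prime\). For assertion (2) the chain \(K_j<J_0<H_i\) gives \(\ker(T_{G,H_i})\le\ker(T_{G,J_0})\le\ker(T_{G,K_j})\) in the same manner. The single point requiring more than monotonicity is the \emph{strict} inclusion \(G^\prime<\ker(T_{G,H_i})\): here I would use the arithmetical capitulation axiom of Remark \ref{rmk:Arithmetic}, whose contrapositive states that \(\ker(T_{G,S})=G^\prime\) forces \(S=G\); since \(H_i\) is a proper maximal subgroup we get \(\ker(T_{G,H_i})\neq G^\prime\), and together with \(G^\prime\le\ker(T_{G,H_i})\) this yields strictness.

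The argument is essentially bookkeeping once transfer transitivity is in hand, so there is no genuine computational obstacle. The main subtlety I would guard against is the \emph{direction} of the kernel inclusion relative to the subgroup inclusion—enlarging the subgroup \emph{shrinks} the kernel—which is easy to reverse by accident; I would sanity-check it against the two boundary cases \(\ker(T_{G,G})=G^\prime\) and \(\ker(T_{G,G^\prime})=G\) built into the arithmetical capitulation axioms. The only ingredient that is arithmetic rather than purely formal is that same axiom, and it is needed solely for the strictness at the left end of each chain.
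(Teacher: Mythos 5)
Your proposal is correct and follows essentially the same route as the paper: the compositum law \(T_{G,K}=T_{H,K}\circ T_{G,H}\) for nested subgroups yields kernel monotonicity, which is then applied along the lattice chains \(K_i<J_{ik}<H_i\) and \(K_j<J_0<H_i\) from Proposition~\ref{prp:Layers}. Your explicit appeal to the arithmetical capitulation axiom of Remark~\ref{rmk:Arithmetic} for the strict inclusion \(G^\prime<\ker(T_{G,H_i})\) is sound and in fact slightly more careful than the paper's printed proof, which only records the endpoint values \(\ker(T_{G,G})=G^\prime\) and \(\ker(T_{G,G^\prime})=G\) and leaves the strictness implicit.
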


\begin{proof}
For nested subgroups \(K<H<G\), we generally have the compositum
\(T_{G,K}=T_{H,K}\circ T_{G,H}\)
of Artin transfers, and consequently
\(T_{G,K}(x)=T_{H,K}(T_{G,H}(x))=T_{H,K}(1)=1\)
for all \(x\in\ker(T_{G,H})\),
that is \(\ker(T_{G,H})\le\ker(T_{G,K})\).
Further, \(\ker(T_{G,G})=G^\prime\) and \(\ker(T_{G,G^\prime})=G\).
\end{proof}


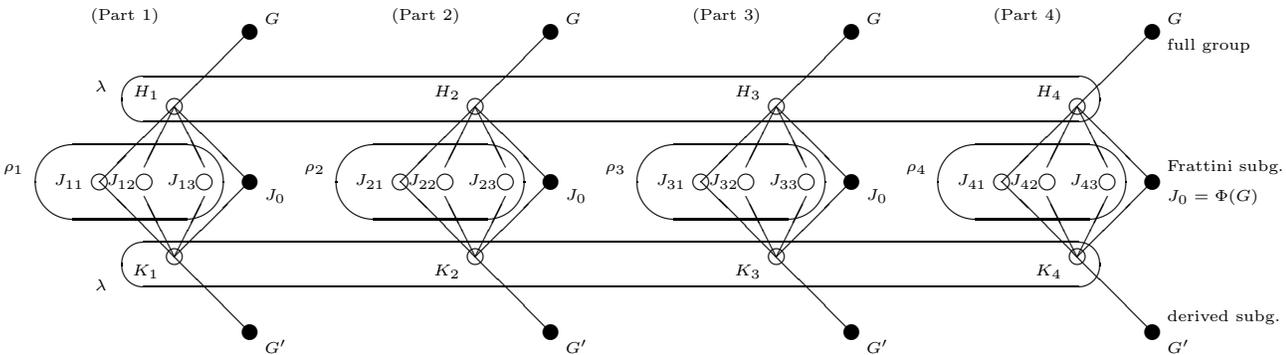
\begin{figure}[ht]
\caption{Permutations of harmonically balanced capitulation}
\label{fig:HarmonicallyBalanced}

{\tiny

\setlength{\unitlength}{1.0cm}
\begin{picture}(14,6)(-12,-10)



\multiput(-6.2,-5.9)(0,-2.2){2}{\oval(13,0.6)}
\put(-12.9,-5.8){\makebox(0,0)[rb]{\(\lambda\)}}
\put(-12.9,-8.3){\makebox(0,0)[rt]{\(\lambda\)}}
\multiput(-0.6,-7)(-4,0){4}{\oval(2.5,1)}
\put(-14,-6.9){\makebox(0,0)[rb]{\(\rho_1\)}}
\put(-10,-6.9){\makebox(0,0)[rb]{\(\rho_2\)}}
\put(-6,-6.9){\makebox(0,0)[rb]{\(\rho_3\)}}
\put(-2,-6.9){\makebox(0,0)[rb]{\(\rho_4\)}}



\put(-12.2,-4.9){\makebox(0,0)[rb]{(Part \(1\))}}
\put(-11,-5){\circle*{0.2}}
\put(-10.8,-4.9){\makebox(0,0)[lb]{\(G\)}}
\put(-10.8,-5.1){\makebox(0,0)[lt]{}}

\put(-11,-5){\line(-1,-1){2}}

\put(-12,-6){\circle{0.2}}
\put(-12.2,-5.9){\makebox(0,0)[rb]{\(H_1\)}}

\put(-12,-6){\line(-1,-2){0.4}}
\put(-12,-6){\line(1,-2){0.4}}
\put(-12,-6){\line(1,-1){1}}

\put(-13,-7){\circle{0.2}}
\put(-13.2,-7){\makebox(0,0)[rc]{\(J_{11}\)}}
\put(-12.4,-7){\circle{0.2}}
\put(-12.5,-7){\makebox(0,0)[rc]{\(J_{12}\)}}
\put(-11.6,-7){\circle{0.2}}
\put(-11.7,-7){\makebox(0,0)[rc]{\(J_{13}\)}}

\put(-10.8,-6.9){\makebox(0,0)[lb]{}}
\put(-11,-7){\circle*{0.2}}
\put(-10.8,-7.1){\makebox(0,0)[lt]{\(J_0\)}}

\put(-12,-8){\line(-1,2){0.4}}
\put(-12,-8){\line(1,2){0.4}}
\put(-12,-8){\line(1,1){1}}

\put(-12,-8){\circle{0.2}}
\put(-12.2,-8.1){\makebox(0,0)[rt]{\(K_1\)}}

\put(-11,-9){\line(-1,1){2}}

\put(-11,-9){\circle*{0.2}}
\put(-10.8,-8.9){\makebox(0,0)[lb]{}}
\put(-10.8,-9.1){\makebox(0,0)[lt]{\(G^\prime\)}}



\put(-8.2,-4.9){\makebox(0,0)[rb]{(Part \(2\))}}
\put(-7,-5){\circle*{0.2}}
\put(-6.8,-4.9){\makebox(0,0)[lb]{\(G\)}}
\put(-6.8,-5.1){\makebox(0,0)[lt]{}}

\put(-7,-5){\line(-1,-1){2}}

\put(-8,-6){\circle{0.2}}
\put(-8.2,-5.9){\makebox(0,0)[rb]{\(H_2\)}}

\put(-8,-6){\line(-1,-2){0.4}}
\put(-8,-6){\line(1,-2){0.4}}
\put(-8,-6){\line(1,-1){1}}

\put(-9,-7){\circle{0.2}}
\put(-9.2,-7){\makebox(0,0)[rc]{\(J_{21}\)}}
\put(-8.4,-7){\circle{0.2}}
\put(-8.5,-7){\makebox(0,0)[rc]{\(J_{22}\)}}
\put(-7.6,-7){\circle{0.2}}
\put(-7.7,-7){\makebox(0,0)[rc]{\(J_{23}\)}}

\put(-6.8,-6.9){\makebox(0,0)[lb]{}}
\put(-7,-7){\circle*{0.2}}
\put(-6.8,-7.1){\makebox(0,0)[lt]{\(J_0\)}}

\put(-8,-8){\line(-1,2){0.4}}
\put(-8,-8){\line(1,2){0.4}}
\put(-8,-8){\line(1,1){1}}

\put(-8,-8){\circle{0.2}}
\put(-8.2,-8.1){\makebox(0,0)[rt]{\(K_2\)}}

\put(-7,-9){\line(-1,1){2}}

\put(-7,-9){\circle*{0.2}}
\put(-6.8,-8.9){\makebox(0,0)[lb]{}}
\put(-6.8,-9.1){\makebox(0,0)[lt]{\(G^\prime\)}}



\put(-4.2,-4.9){\makebox(0,0)[rb]{(Part \(3\))}}
\put(-3,-5){\circle*{0.2}}
\put(-2.8,-4.9){\makebox(0,0)[lb]{\(G\)}}
\put(-2.8,-5.1){\makebox(0,0)[lt]{}}

\put(-3,-5){\line(-1,-1){2}}

\put(-4,-6){\circle{0.2}}
\put(-4.2,-5.9){\makebox(0,0)[rb]{\(H_3\)}}

\put(-4,-6){\line(-1,-2){0.4}}
\put(-4,-6){\line(1,-2){0.4}}
\put(-4,-6){\line(1,-1){1}}

\put(-5,-7){\circle{0.2}}
\put(-5.2,-7){\makebox(0,0)[rc]{\(J_{31}\)}}
\put(-4.4,-7){\circle{0.2}}
\put(-4.5,-7){\makebox(0,0)[rc]{\(J_{32}\)}}
\put(-3.6,-7){\circle{0.2}}
\put(-3.7,-7){\makebox(0,0)[rc]{\(J_{33}\)}}

\put(-2.8,-6.9){\makebox(0,0)[lb]{}}
\put(-3,-7){\circle*{0.2}}
\put(-2.8,-7.1){\makebox(0,0)[lt]{\(J_0\)}}

\put(-4,-8){\line(-1,2){0.4}}
\put(-4,-8){\line(1,2){0.4}}
\put(-4,-8){\line(1,1){1}}

\put(-4,-8){\circle{0.2}}
\put(-4.2,-8.1){\makebox(0,0)[rt]{\(K_3\)}}

\put(-3,-9){\line(-1,1){2}}

\put(-3,-9){\circle*{0.2}}
\put(-2.8,-8.9){\makebox(0,0)[lb]{}}
\put(-2.8,-9.1){\makebox(0,0)[lt]{\(G^\prime\)}}



\put(-0.2,-4.9){\makebox(0,0)[rb]{(Part \(4\))}}
\put(1,-5){\circle*{0.2}}
\put(1.2,-4.9){\makebox(0,0)[lb]{\(G\)}}
\put(1.2,-5.1){\makebox(0,0)[lt]{full group}}

\put(1,-5){\line(-1,-1){2}}

\put(0,-6){\circle{0.2}}
\put(-0.2,-5.9){\makebox(0,0)[rb]{\(H_4\)}}

\put(0,-6){\line(-1,-2){0.4}}
\put(0,-6){\line(1,-2){0.4}}
\put(0,-6){\line(1,-1){1}}

\put(-1,-7){\circle{0.2}}
\put(-1.2,-7){\makebox(0,0)[rc]{\(J_{41}\)}}
\put(-0.4,-7){\circle{0.2}}
\put(-0.5,-7){\makebox(0,0)[rc]{\(J_{42}\)}}
\put(0.4,-7){\circle{0.2}}
\put(0.3,-7){\makebox(0,0)[rc]{\(J_{43}\)}}

\put(1.2,-6.9){\makebox(0,0)[lb]{Frattini subg.}}
\put(1,-7){\circle*{0.2}}
\put(1.2,-7.1){\makebox(0,0)[lt]{\(J_0=\Phi(G)\)}}

\put(0,-8){\line(-1,2){0.4}}
\put(0,-8){\line(1,2){0.4}}
\put(0,-8){\line(1,1){1}}

\put(0,-8){\circle{0.2}}
\put(-0.2,-8.1){\makebox(0,0)[rt]{\(K_4\)}}

\put(1,-9){\line(-1,1){2}}

\put(1,-9){\circle*{0.2}}
\put(1.2,-8.9){\makebox(0,0)[lb]{derived subg.}}
\put(1.2,-9.1){\makebox(0,0)[lt]{\(G^\prime\)}}


\end{picture}

}

\end{figure}


\begin{remark}
\label{rmk:Scenarios}
For imaginary quadratic base fields, three principal scenarios of capitulation seem to occur.
The transfer kernels are either
\textit{bi-polarized}, e.g. \(\varkappa_1=(K_4,K_1,K_1,K_1)\), and thus necessarily \(\ker(T_{G,J_0})=J_0\), or
\textit{uni-polarized}, e.g. \(\varkappa_1=(K_1,K_1,K_1,K_1)\) with exceptional \(\ker(T_{G,J_0})=J_{11}\), or
\textit{harmonically balanced} (the most frequent scenario), as described by the following proposition.
\end{remark}


\begin{proposition}
\label{prp:Harmonic}
\(G\) possesses harmonically balanced capitulation (briefly \(\mathrm{HBC}\)) if and only if
\(\ker(T_{G,J_0})=J_0\)
and there exists a permutation \(\lambda\in\mathfrak{S}_4\)
such that for all \(1\le i\le 4\)
\begin{enumerate}
\item
\(\ker(T_{G,H_i})=K_{\lambda(i)}\), 
\item
\((\exists\rho_i\in\mathfrak{S}_3)\) \((\forall 1\le k\le 3)\) \(\ker(T_{G,J_{ik}})=J_{\lambda(i),\rho_i(k)}\),
\item
\(\ker(T_{G,K_i})=H_{\lambda(i)}\).
\end{enumerate}
\end{proposition}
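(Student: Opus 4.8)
The plan is to exploit the fact that the \(23\) intermediate subgroups \(G^\prime\le S\le G\) enumerated in Proposition \ref{prp:Layers} are precisely \emph{all} subgroups of the quotient \(G/G^\prime\simeq C_9\times C_9\): one each of order \(1\) and \(81\), four each of order \(3\) and \(27\), and thirteen of order \(9\), of which \(J_0\) is the only non-cyclic one. Under this identification, the hypothesis of Definition \ref{dfn:Harmonic} says exactly that the \emph{kernel map} \(\kappa\colon S\mapsto\ker(T_{G,S})\) is a bijection of this \(23\)-element set. The proof of Proposition \ref{prp:Inclusion} already shows that \(\kappa\) is \emph{order-reversing}, since \(S_1\le S_2\) forces \(\ker(T_{G,S_2})\le\ker(T_{G,S_1})\). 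Hence, granting \(\mathrm{HBC}\), the map \(\kappa\) is an order-reversing bijection of the subgroup lattice of \(C_9\times C_9\), i.e.\ an anti-automorphism, and the whole argument reduces to analysing such an anti-automorphism together with the boundary axioms of Remark \ref{rmk:Arithmetic}.

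For the implication (\(\Leftarrow\)) I would simply assemble the permutation \(\pi\) from the given data: swap \(G\leftrightarrow G^\prime\) (which records \(\ker(T_{G,G})=G^\prime\) and \(\ker(T_{G,G^\prime})=G\)), fix \(J_0\) by hypothesis, and put \(H_i\mapsto K_{\lambda(i)}\), \(K_i\mapsto H_{\lambda(i)}\), \(J_{ik}\mapsto J_{\lambda(i),\rho_i(k)}\). Because \(\lambda\in\mathfrak{S}_4\) and each \(\rho_i\in\mathfrak{S}_3\), these rules biject \(\lbrace H_i\rbrace\) onto \(\lbrace K_j\rbrace\), \(\lbrace K_i\rbrace\) onto \(\lbrace H_j\rbrace\), and the twelve cyclic \(J_{ik}\) onto themselves; so \(\pi\) is a genuine permutation of all \(23\) subgroups realising the prescribed kernels, which is \(\mathrm{HBC}\).

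The substance lies in (\(\Rightarrow\)). Assuming \(\mathrm{HBC}\), the map \(\kappa\) is an order-reversing bijection, and Remark \ref{rmk:Arithmetic} forces \(\kappa(G)=G^\prime\), \(\kappa(G^\prime)=G\). An order-reversing bijection carries atoms to coatoms and back, so \(\kappa\) sends the four atoms \(\lbrace K_i\rbrace\) onto the four coatoms \(\lbrace H_j\rbrace\) and conversely, and therefore permutes the thirteen order-\(9\) subgroups among themselves. I would then pin down \(J_0\) lattice-theoretically: it is the unique order-\(9\) subgroup lying above all four atoms, so \(\kappa(J_0)\) lies below all four coatoms, whence \(\kappa(J_0)\le\bigcap_j H_j=\Phi(G)=J_0\); as \(\kappa\) preserves the order \(9\), this gives \(\kappa(J_0)=J_0\), which is the first required condition and shows that \(\kappa\) permutes the twelve cyclic \(J_{ik}\). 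Setting \(\kappa(H_i)=K_{\lambda(i)}\) defines \(\lambda\in\mathfrak{S}_4\) and is condition (1).

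Conditions (2) and (3) then come from the arm structure, using that a cyclic \(C_9\) inside \(C_9\times C_9\) lies above a \emph{unique} atom and below a \emph{unique} coatom. From \(J_{ik}\le H_i\) and order-reversal, \(K_{\lambda(i)}=\kappa(H_i)\le\kappa(J_{ik})\), so the cyclic subgroup \(\kappa(J_{ik})\) lies above the atom \(K_{\lambda(i)}\) and hence equals \(J_{\lambda(i),\rho_i(k)}\) for some value \(\rho_i(k)\); injectivity of \(\kappa\) upgrades each \(\rho_i\) to a permutation in \(\mathfrak{S}_3\), giving condition (2). Finally \(K_i\le J_{ik}\) yields \(J_{\lambda(i),\rho_i(k)}=\kappa(J_{ik})\le\kappa(K_i)\), and since \(J_{\lambda(i),\rho_i(k)}\) lies below the single coatom \(H_{\lambda(i)}\), the coatom \(\kappa(K_i)\) must equal \(H_{\lambda(i)}\), which is condition (3). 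I expect the one genuinely delicate point to be this middle layer: the atom--coatom incidence is \emph{complete} (every \(K_j\le J_0\le H_i\)) and so conveys nothing, and all the rigidity has to be squeezed out of the order-\(9\) subgroups --- the non-cyclic \(J_0\) fixing the centre and each cyclic \(J_{ik}\) tying its unique atom to its unique coatom. Once that is isolated, the remaining verifications are pure bookkeeping.
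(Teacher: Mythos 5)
Your proposal is correct and takes essentially the same approach as the paper: the paper's entire proof is the one-line assertion that the proposition is ``a consequence of Definition \ref{dfn:Harmonic} under the constraints of Proposition \ref{prp:Inclusion}'', i.e.\ exactly your observation that HBC makes the kernel map \(S\mapsto\ker(T_{G,S})\) a bijection of the \(23\) intermediate subgroups, which the inclusions of Proposition \ref{prp:Inclusion} force to be order-reversing, so that the atom/coatom and cyclic-\(C_9\) lattice bookkeeping in \(C_9\times C_9\) pins down \(\kappa(J_0)=J_0\), \(\lambda\), and the \(\rho_i\). Your write-up merely supplies in full the lattice-theoretic details the paper leaves implicit, with no methodological divergence.
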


\begin{proof}
This is a consequence of Definition
\ref{dfn:Harmonic}
under the constraints of Proposition
\ref{prp:Inclusion}.
\end{proof}

\noindent
The five permutations involved in harmonically balanced capitulation are
illustrated in Figure
\ref{fig:HarmonicallyBalanced}.


\section{Imaginary quadratic fields of type \((9,9)\)}
\label{s:Imaginary}
\noindent
Since Scholz and Taussky
\cite{SoTa}
were generally interested in the way
how more and more ideal classes of order a power of \(p\) successively become principal
in the subfields \(N\) of the Hilbert \(p\)-class field \(\mathrm{F}_p^1(K)\)
of a base field \(K\) with non-trivial \(p\)-class group,
it might well be true that the genius Scholz intended to investigate
\(3\)-class field towers of imaginary quadratic fields \(K\)
with \(3\)-class group \(\mathrm{Cl}_3(K)\simeq C_9\times C_9\),
because he had a presentiment
that the capitulation will frequently be \textit{harmonically balanced}.
In the sequel, we show that this is indeed the case.


\begin{theorem}
\label{thm:MetabelianI1} (Two-stage tower 1.)
Let \(K\) be an imaginary quadratic field
with Artin pattern
\begin{equation}
\label{eqn:APMetabelianI1}
\begin{aligned}
\tau_0&=(22), \quad \tau_1=((221)^4), \quad \varkappa_1=(K_1,K_2,K_3,K_4), \quad \tau_2=((321)^{12};222), \\
\varkappa_2&=(J_{13},J_{11},J_{12},J_{21},J_{22},J_{23},J_{31},J_{32},J_{33},J_{41},J_{42},J_{43};J_0), \text{ resp.} \\
\varkappa_2&=(J_{13},J_{11},J_{12},J_{23},J_{21},J_{22},J_{33},J_{31},J_{32},J_{42},J_{43},J_{41};J_0).
\end{aligned}
\end{equation}
Then the \(3\)-class field tower of \(K\) has length \(\ell_3(K)=2\)
and Galois group \(\mathfrak{G}=\mathrm{Gal}(\mathrm{F}_3^\infty(K)/K)\)
isomorphic to one of the two metabelian Schur \(\sigma2\)-groups
\(\langle 6561,28\rangle-\#2;i\) with \(i=4\), resp. \(i=5\), coclass \(7\),
\(\tau_3=((222)^4)\), \(\varkappa_3=(H_1,H_2,H_3,H_4)\) and \(\tau_4=(222)\).
Both groups possess harmonically balanced capitulation:
\(\lambda=1\), \(\rho_1=(132)\), \(\rho_2=\rho_3=\rho_4=1\), resp. \(\rho_2=\rho_3=(132)\), \(\rho_4=(123)\).
\end{theorem}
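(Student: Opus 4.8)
The plan is to prove the theorem by \emph{pattern recognition}, in the spirit of \cite{MaNm}: an assertion about the infinite pro-\(3\) tower group \(\mathfrak{G}\) is reduced to a finite group-theoretic identification of its metabelianization. First I would invoke Theorem \ref{thm:ArtinReciprocity} to reinterpret the prescribed arithmetic data: for the imaginary quadratic field \(K\) the number theoretic Artin pattern \(\mathrm{AP}(K)=(\varkappa,\tau)\) coincides with the group theoretic Artin pattern \(\mathrm{AP}(M)\) of the second \(3\)-class group \(M=\mathrm{Gal}(\mathrm{F}_3^2(K)/K)\), and \(M^\prime\simeq\mathrm{Cl}_3(\mathrm{F}_3^1(K))\). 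Next, Corollary \ref{cor:Shafarevich} and Theorem \ref{thm:Schur} supply the decisive structural constraints: since \(K\) is imaginary one has \(r=0\), hence \(d_1=d_2=2\), so \(\mathfrak{G}\) is a Schur \(\sigma\)-group and \(M\), being a higher \(3\)-class group, is a \(\sigma2\)-group. This fixes generator rank and relation rank and restricts the admissible presentations to balanced ones.

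The core of the argument is the recognition step. Using the explicit normal lattice of Proposition \ref{prp:Layers}, I would compute the group theoretic Artin pattern of candidate metabelian \(3\)-groups \(G\) with \(G/G^\prime\simeq C_9\times C_9\), descending through the tree rooted at \(R=\langle 243,2\rangle\) (Proposition \ref{prp:MetabelianRoot} and Figure \ref{fig:RootRegion}) by means of the \(p\)-group generation algorithm. The prescribed invariants \(\tau_0=(22)\), \(\tau_1=((221)^4)\), \(\varkappa_1=(K_1,K_2,K_3,K_4)\) (that is \(\lambda=1\) in the notation of Proposition \ref{prp:Harmonic}) and \(\tau_2=((321)^{12};222)\) should single out the step size-\(2\) descendants of \(\langle 6561,28\rangle\); the two admissible refinements of \(\varkappa_2\) then separate \(i=4\) from \(i=5\) by fixing the row permutations \(\rho_i\in\mathfrak{S}_3\). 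The inclusion laws of Proposition \ref{prp:Inclusion} force \(\ker(T_{G,J_0})=J_0\), so both solutions genuinely satisfy the criterion for harmonically balanced capitulation.

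It then remains to pass from the metabelianization \(M\) to the full tower group. Here I would verify directly, from a power-commutator presentation of each candidate, that \(\langle 6561,28\rangle-\#2;i\) is itself a metabelian Schur \(\sigma2\)-group with \(d_2=2\). Since \(\mathfrak{G}\) is a Schur \(\sigma\)-group with the \emph{same} bound \(d_1=d_2=2\) and with metabelianization \(M\), a proper non-metabelian cover \(\mathfrak{G}\) with \(\mathfrak{G}^{\prime\prime}\neq 1\) would push the relation rank beyond the Shafarevich upper bound of Corollary \ref{cor:Shafarevich}; hence \(\mathfrak{G}^{\prime\prime}=1\), \(\mathfrak{G}=M\), \(\mathrm{dl}(\mathfrak{G})=2\), and \(\ell_3(K)=2\). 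Finally the third- and fourth-layer invariants \(\tau_3=((222)^4)\), \(\varkappa_3=(H_1,H_2,H_3,H_4)\) and \(\tau_4=(222)\) are read off purely group-theoretically from the presentation of \(M\), exactly as announced in \S\,\ref{s:Intro}, without any further arithmetic computation.

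The hard part will be the \emph{completeness} of the recognition step: establishing that no metabelian \(3\)-group with \(G/G^\prime\simeq C_9\times C_9\) outside these two vertices reproduces the exact pair \((\varkappa,\tau)\), and justifying that the traversal may be terminated at finite logarithmic order \(10\). This is precisely the point at which a systematic descent through the tree, rather than a closed-form argument, seems unavoidable, and where the computational verification with Magma \cite{MAGMA} carries the decisive weight.
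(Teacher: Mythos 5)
Your overall strategy coincides with the paper's: both rest on the Artin reciprocity identification \(\mathrm{AP}(K)=\mathrm{AP}(M)\), the Shafarevich/Koch--Venkov constraints \(d_1=d_2=2\) for imaginary \(K\), and an exhaustive sieve of the descendant tree of \(R=\langle 243,2\rangle\) (depth \(2\), step size up to \(3\), the \(27\,222\) vertices) by the \(p\)-group generation algorithm in Magma. Two small divergences are worth noting: the paper deliberately filters only by the targets \(\tau_0,\tau_1,\tau_2\) and the relation rank \(d_2=2\), \emph{without} the kernels, and finds that this already yields exactly \(\langle 6561,28\rangle-\#2;4\) and \(\#2;5\) --- so the hypotheses on \(\varkappa_1,\varkappa_2\) are removable, a stronger statement than your \(\varkappa\)-based sieve delivers; and your remark that the two refinements of \(\varkappa_2\) ``separate \(i=4\) from \(i=5\)'' is only group-theoretic, since by Example \ref{exm:MetabelianI1} the arithmetic ordering of \(\varkappa_2\) cannot be pinned down, which is why the theorem asserts merely ``one of the two''.

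The genuine gap is your final descent from \(M\) to \(\mathfrak{G}\). You argue that a proper cover with \(\mathfrak{G}^{\prime\prime}\neq 1\) ``would push the relation rank beyond the Shafarevich upper bound'', hence \(\mathfrak{G}=M\). This principle is false: non-metabelian Schur \(\sigma\)-groups with balanced presentation \(d_2=d_1=2\) exist in abundance in exactly this setting --- the groups \(T_{i,k}\) of Theorems \ref{thm:NonMetabelianI1} and \ref{thm:NonMetabelianI2} (see the legends ``non-metabelian with \(d_2=2\)'' in Figures \ref{fig:RootRegion} and \ref{fig:Trifurcations}) are such groups and do occur as \(3\)-tower groups of imaginary quadratic fields of type \((9,9)\). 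Passing to the metabelianization tends to \emph{raise} the relation rank (the group \(M\) in Theorem \ref{thm:NonMetabelianI2} has \(d_2=4\)), so no a priori inequality derived from Corollary \ref{cor:Shafarevich} can exclude \(\mathfrak{G}^{\prime\prime}\neq 1\); indeed, if your argument were sound, the stage separation criterion of Theorem \ref{thm:StageSeparation}, which needs the second-order invariants \(\tau_{2,1}\) to distinguish \(\ell_3=2\) from \(\ell_3\ge 3\) for identical layer-\(1\) and layer-\(2\) targets, would be superfluous. What actually closes the argument is a computational fact: the sieve runs over \emph{all} vertices of the tree region, non-metabelian ones included, and since \(\mathfrak{G}\) itself shares the Artin pattern of \(K\) (all higher \(3\)-class groups have a common \(\mathrm{AP}\), by the result of \cite{Ma2016a} quoted in \S\ \ref{s:Imaginary}) and must be a Schur \(\sigma\)-group with \(d_2=2\), the fact that only the two metabelian groups survive the filter forces \(\mathfrak{G}\) to be one of them, whence \(\ell_3(K)=2\). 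You correctly flag the completeness of the finite traversal as the remaining computational burden, but the exclusion of non-metabelian covers belongs to that verification too and cannot be obtained by the cohomological shortcut you propose.
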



\begin{example}
\label{exm:MetabelianI1}
The information in Formula
\eqref{eqn:APMetabelianI1}
occurs rather sparsely as Artin pattern
of imaginary quadratic fields \(K\) with \(3\)-class group of type \((9,9)\).
The absolutely smallest discriminant \(d_K\) with this pattern is
\(-426\,291=-3\cdot 142\,097\).
Since the ordering of the members of \(\varkappa_2\)
cannot be determined strictly by arithmetic computations,
it is impossible to decide whether \(i=4\) or \(i=5\).
\end{example}


\begin{proof}
(Proof of Theorem
\ref{thm:MetabelianI1})
The descendant tree of the metabelian but non-abelian root \(\langle 243,2\rangle\)
with depth \(2\) and step size up to \(3\)
is constructed by means of the \(p\)-group generation algorithm
\cite{Nm,Ob},
which is implemented in Magma
\cite{BCP,BCFS,MAGMA}.
The resulting \(27\,222\) vertices are sifted by pattern recognition with respect to
the transfer targets \(\tau_0=(22)\), \(\tau_1=((221)^4)\), \(\tau_2=((321)^{12};222)\),
and the relation rank \(d_2=2\) of the required balanced presentation,
but without taking into consideration the transfer kernels \(\varkappa_1\) and \(\varkappa_2\).
This filtering process unambiguously leads to
the two metabelian Schur \(\sigma2\)-groups
\(\langle 6561,28\rangle-\#2;i\) with \(i=4\) resp. \(i=5\),
which also satisfy the requirements of harmonically balanced capitulation.
So the assumptions concerning \(\varkappa_1\) and \(\varkappa_2\) could be removed from Theorem
\ref{thm:MetabelianI1}.
The other three siblings are also metabelian Schur \(\sigma2\)-groups
but they possess different second layer Artin patterns: \\
\(\tau_2=((222)^3,(321)^6,(222)^3;222)\) for \(i=1\), \\
\(\tau_2=((222)^3,(321)^3,(222)^6;222)\) for \(i=2\), \\
\(\tau_2=((222)^3,(321)^9;222)\) for \(i=3\). \\
This additional information could be useful for studying other base fields of type \((9,9)\).
\end{proof}


\begin{theorem}
\label{thm:MetabelianI2a} (Two-stage tower 2a.)
Let \(K\) be an imaginary quadratic field
with Artin pattern
\begin{equation}
\label{eqn:APMetabelianI2a}
\begin{aligned}
\tau_0&=(22), \quad \tau_1=((311)^2,221,311), \quad \varkappa_1=(K_4,K_1,K_3,K_2), \\
\tau_2&=((42)^6,(321)^3,(42)^3;222), \\
\varkappa_2&=(J_{41},J_{43},J_{42},J_{11},J_{12},J_{13},J_{31},J_{32},J_{33},J_{23},J_{22},J_{21};J_0), \text{ resp.} \\
\varkappa_2&=(J_{41},J_{43},J_{42},J_{12},J_{13},J_{11},J_{32},J_{33},J_{31},J_{21},J_{23},J_{22};J_0).
\end{aligned}
\end{equation}
Then the \(3\)-class field tower of \(K\) has length \(\ell_3(K)=2\)
and Galois group \(\mathfrak{G}=\mathrm{Gal}(\mathrm{F}_3^\infty(K)/K)\)
isomorphic to one of the two metabelian Schur \(\sigma\)-groups
\(\langle 6561,26\rangle-\#2;i\) with \(i=1\) resp. \(i=3\), coclass \(7\),
\(\tau_3=((321)^2),222,321\), \(\varkappa_3=(H_4,H_1,H_3,H_2)\) and \(\tau_4=(222)\).
Both groups possess \(\mathrm{HBC}\):
\(\lambda=(142)\), \(\rho_1=(23)\), \(\rho_2=\rho_3=1\), \(\rho_4=(13)\), resp. \(\rho_2=\rho_3=(123)\), \(\rho_4=(23)\).
\end{theorem}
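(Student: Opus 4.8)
The plan is to transplant, essentially verbatim, the computational strategy that established Theorem~\ref{thm:MetabelianI1}, since the logical skeleton is identical and only the target Artin pattern and the resulting isomorphism type change. First I would invoke the \(p\)-group generation algorithm \cite{Nm,Ob}, as implemented in Magma \cite{BCP,BCFS,MAGMA}, to regenerate the descendant tree of the metabelian root \(R=\langle 243,2\rangle\) of Proposition~\ref{prp:MetabelianRoot}, restricted to depth \(2\) and step size at most \(3\); this yields the same finite list of \(27\,222\) vertices. By Theorem~\ref{thm:ArtinReciprocity} the metabelianization \(M=\mathfrak{G}/\mathfrak{G}^{\prime\prime}\) carries the number theoretic Artin pattern of \(K\), so the sought \(M\) must occur among these vertices.

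Next I would sift the list by pattern recognition, retaining exactly those vertices whose algebraic Artin pattern matches the prescribed transfer targets \(\tau_0=(22)\), \(\tau_1=((311)^2,221,311)\), \(\tau_2=((42)^6,(321)^3,(42)^3;222)\) and which admit a balanced presentation with relation rank \(d_2=2\). In contrast to Theorem~\ref{thm:MetabelianI1}, where \(\tau_1=((221)^4)\) is homogeneous across the four maximal subgroups, here \(\tau_1\) is inhomogeneous: the subgroup \(H_3\) is singled out by type \((221)\) while \(H_1,H_2,H_4\) carry type \((311)\). This broken symmetry pins down a distinguished parent among the step-\(3\) children of \(R\) in Table~\ref{tbl:Ord243Id2}, and I expect the filter to isolate precisely the two step-\(2\) descendants \(\langle 6561,26\rangle-\#2;i\) with \(i=1\) and \(i=3\).

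To deduce \(\ell_3(K)=2\) I would argue exactly as in the imaginary framework underlying Theorem~\ref{thm:MetabelianI1}: Corollary~\ref{cor:Shafarevich} forces \(d_2(\mathfrak{G})=2\), and Theorem~\ref{thm:Schur} forces \(\mathfrak{G}\) to be a Schur \(\sigma\)-group. Since each surviving candidate is already a metabelian Schur \(\sigma\)-group attaining the minimal relation rank \(d_2=2\) with a balanced presentation, that presentation simultaneously presents the tower group, whence \(\mathfrak{G}=M\) is metabelian and \(\mathfrak{G}^{\prime\prime}=1\). The remaining invariants \(\tau_3=((321)^2,222,321)\), \(\varkappa_3=(H_4,H_1,H_3,H_2)\), \(\tau_4=(222)\) and the capitulation permutations \(\lambda=(142)\), \(\rho_1=(23)\), \(\rho_2=\rho_3=1\), \(\rho_4=(13)\) (resp.\ \(\rho_2=\rho_3=(123)\), \(\rho_4=(23)\)) would then be read off from presentations of the two groups, with the inclusion constraints of Proposition~\ref{prp:Inclusion} and the characterization of Proposition~\ref{prp:Harmonic} serving as an independent consistency check. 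As a sanity test, the unique fixed point \(3\) of \(\lambda\) must coincide with the distinguished index in both \(\tau_1\) and \(\tau_3\), which it indeed does.

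The hard part will be the bookkeeping forced by the non-trivial \(\lambda=(142)\) in tandem with the inhomogeneous \(\tau_1\). In the symmetric case \(\lambda=1\) of Theorem~\ref{thm:MetabelianI1} the labelling of the maximal subgroups is largely interchangeable, whereas here the automorphism group of \(M\) no longer acts so as to trivialize the identification, so I must verify that matching the abstract transfer kernels to the concrete generators \(H_i,J_{ik},K_i\) of Proposition~\ref{prp:Layers} respects the fixed ordering that produces exactly \(\lambda=(142)\) and the stated \(\rho_i\). I also anticipate, as in Example~\ref{exm:MetabelianI1}, that the arithmetic data alone cannot separate \(i=1\) from \(i=3\): the ordering of the components of \(\varkappa_2\) is not intrinsically fixed by the number field computation, so this ambiguity is genuine and must be recorded rather than resolved, which is precisely why both isomorphism types survive in the statement.
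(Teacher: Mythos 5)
Your proposal matches the paper's proof in all essentials: the paper likewise constructs the depth-\(2\), step-size-\(\le 3\) descendant tree of \(\langle 243,2\rangle\) (the same \(27\,222\) vertices), sifts by transfer targets and relation rank \(d_2=2\) while deliberately ignoring \(\varkappa_1,\varkappa_2\), and then verifies harmonically balanced capitulation on the survivors so that the kernel hypotheses could even be dropped. The only cosmetic difference is that the paper proves Theorems~\ref{thm:MetabelianI2a} and~\ref{thm:MetabelianI2b} simultaneously, sifting by the \emph{unordered} occupation numbers \(\tau_1=((311)^3,221)\) and \(\tau_2=((42)^9,(321)^3;222)\) (which coincide for both theorems) and obtaining four groups --- \(\langle 6561,26\rangle-\#2;i\) with \(i\in\lbrace 1,3\rbrace\) and \(\langle 6561,27\rangle-\#2;i\) with \(i\in\lbrace 1,2\rbrace\) --- rather than using your ordered \(\tau_1\) to single out the children of \(\langle 6561,26\rangle\) directly.
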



\begin{theorem}
\label{thm:MetabelianI2b} (Two-stage tower 2b.)
Let \(K\) be an imaginary quadratic field
with Artin pattern
\begin{equation}
\label{eqn:APMetabelianI2b}
\begin{aligned}
\tau_0&=(22), \quad \tau_1=(311,221,(311)^2), \quad \varkappa_1=(K_4,K_2,K_1,K_3), \\
\tau_2&=((42)^3,(321)^3,(42)^6;222), \\
\varkappa_2&=(J_{41},J_{43},J_{42},J_{21},J_{22},J_{23},J_{11},J_{12},J_{13},J_{31},J_{33},J_{32};J_0), \text{ resp.} \\
\varkappa_2&=(J_{41},J_{43},J_{42},J_{22},J_{23},J_{21},J_{12},J_{13},J_{11},J_{32},J_{31},J_{33};J_0).
\end{aligned}
\end{equation}
Then the \(3\)-class field tower of \(K\) has length \(\ell_3(K)=2\)
and Galois group \(\mathfrak{G}=\mathrm{Gal}(\mathrm{F}_3^\infty(K)/K)\)
isomorphic to one of the two metabelian Schur \(\sigma\)-groups
\(\langle 6561,27\rangle-\#2;i\) with \(i=1\) resp. \(i=2\), coclass \(7\),
\(\tau_3=(321,222,(321)^2)\), \(\varkappa_3=(H_4,H_2,H_1,H_3)\) and \(\tau_4=(222)\).
Both groups possess \(\mathrm{HBC}\):
\(\lambda=(143)\), \(\rho_1=(23)\), \(\rho_2=\rho_3=1\), \(\rho_4=(23)\), resp. \(\rho_2=\rho_3=(123)\), \(\rho_4=(12)\).
\end{theorem}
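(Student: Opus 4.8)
The plan is to transcribe, with the obvious change of target data, the computational strategy already carried out in the proof of Theorem~\ref{thm:MetabelianI1}. First I would construct the descendant tree of the metabelian root $R=\langle 243,2\rangle$ of Proposition~\ref{prp:MetabelianRoot} up to depth~$2$ and step size $s\le 3$ with the $p$-group generation algorithm in Magma, producing the same finite population of $27\,222$ vertices used for Theorem~\ref{thm:MetabelianI1}. Every such vertex $G$ has $G/G^\prime\simeq C_9\times C_9$, so its normal lattice above $G^\prime$ is exactly the one of Proposition~\ref{prp:Layers} and Figure~\ref{fig:NormalLattice}, and its group theoretic Artin pattern $\mathrm{AP}(G)$ of Definition~\ref{dfn:APinGT} is well defined on all four layers.

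Next I would sift this population by pattern recognition, keeping only those vertices whose transfer target type matches the arithmetic data of~\eqref{eqn:APMetabelianI2b}, namely $\tau_0=(22)$, $\tau_1=(311,221,(311)^2)$, $\tau_2=((42)^3,(321)^3,(42)^6;222)$, together with the relation rank $d_2=2$ forced for imaginary base fields by Corollary~\ref{cor:Shafarevich}. As in Theorem~\ref{thm:MetabelianI1}, I expect the kernels $\varkappa_1,\varkappa_2$ to be redundant: the filter on $(\tau_0,\tau_1,\tau_2,d_2)$ alone should single out exactly the two metabelian Schur $\sigma$-groups $\langle 6561,27\rangle-\#2;i$ with $i=1$ resp. $i=2$, sitting on the step-size-$2$ descendants of the root-region group $\langle 6561,27\rangle=R-\#3;26$ of Table~\ref{tbl:Ord243Id2} (which has nuclear rank $\nu=2$ and incapable step-size-$2$ children, $(N_2,C_2)=(3,0)$). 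For each survivor I would then compute the full Artin pattern and verify that $\varkappa_1=(K_4,K_2,K_1,K_3)$, $\varkappa_2$ in one of the two displayed orderings, $\varkappa_3=(H_4,H_2,H_1,H_3)$, $\tau_3=(321,222,(321)^2)$ and $\tau_4=(222)$ are reproduced, and that the criterion of Proposition~\ref{prp:Harmonic} holds with $\lambda=(143)$, $\rho_1=(23)$, $\rho_2=\rho_3=1$, $\rho_4=(23)$ resp. $\rho_2=\rho_3=(123)$, $\rho_4=(12)$, certifying harmonically balanced capitulation.

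With the candidate fixed, the identification of the tower group is purely group theoretic. By Theorem~\ref{thm:ArtinReciprocity} the coincidence of the arithmetic pattern $\mathrm{AP}(K)$ with the algebraic pattern $\mathrm{AP}(M)$ identifies the second $3$-class group $M=\mathrm{Gal}(\mathrm{F}_3^2(K)/K)$ with the sifted group. Since this group is already a Schur $\sigma$-group with a balanced presentation $d_2=d_1=2$ — precisely the constraint Theorem~\ref{thm:Schur} imposes on the full tower group $\mathfrak{G}$ of an imaginary quadratic field — there is no room for a proper non-metabelian extension, so $\mathfrak{G}=M$, the metabelianization closes the tower, and $\ell_3(K)=2$. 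As in Example~\ref{exm:MetabelianI1}, the two possibilities $i=1$ and $i=2$, equivalently the two orderings of $\varkappa_2$, cannot be separated by arithmetic computation, so both are retained in the conclusion.

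The main obstacle is not conceptual but verificational: one must establish that the sieve on $(\tau_0,\tau_1,\tau_2,d_2)$ leaves \emph{exactly} the two asserted groups among the entire depth-$2$ population, which is a large but finite enumeration functioning as a computational certificate rather than a closed-form deduction. Once the candidate list is pinned down, the remaining work — the explicit transfer computations confirming $\varkappa_1,\varkappa_2,\varkappa_3$ and the permutations $\lambda,\rho_i$, and the appeals to Theorems~\ref{thm:ArtinReciprocity} and~\ref{thm:Schur} closing the tower — is routine.
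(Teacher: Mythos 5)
Your proposal is correct in substance and follows the paper's computational strategy, but the paper organizes the argument differently in one point that matters: it proves Theorems~\ref{thm:MetabelianI2a} and~\ref{thm:MetabelianI2b} \emph{simultaneously}, sifting the same \(27\,222\) vertices by the occupation numbers of the \emph{unordered} transfer targets \(\tau_0=(22)\), \(\tau_1=((311)^3,221)\), \(\tau_2=((42)^9,(321)^3;222)\) --- data which coincide for both theorems --- together with \(d_2=2\). That ordering-invariant sieve leaves exactly \emph{four} survivors, \(\langle 6561,26\rangle-\#2;i\) with \(i\in\lbrace 1,3\rbrace\) and \(\langle 6561,27\rangle-\#2;i\) with \(i\in\lbrace 1,2\rbrace\), which are then apportioned to 2a and 2b by their ordered patterns. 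Your plan to filter directly on the ordered \(\tau_1=(311,221,(311)^2)\) and \(\tau_2=((42)^3,(321)^3,(42)^6;222)\) and to expect exactly two survivors is delicate: the component ordering of a group-theoretic Artin pattern depends on the choice of generators, and on the arithmetic side Magma produces the terms in random order (cf.\ Example~\ref{exm:MetabelianI2}), which is precisely why the paper sieves on multisets. Your follow-up step of recomputing the full pattern for each survivor repairs this, so there is no fatal gap, but the paper's formulation is the robust one. The simultaneous treatment also buys something your single-theorem sift would miss: the proof records the remaining siblings \(\langle 6561,26\rangle-\#2;2\) and \(\langle 6561,27\rangle-\#2;3\) with their deviating \(\tau_2\), and exactly these remarks constitute the entire proof of Theorems~\ref{thm:MetabelianI3a} and~\ref{thm:MetabelianI3b}. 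Your closing argument via Theorem~\ref{thm:ArtinReciprocity}, Corollary~\ref{cor:Shafarevich} and Theorem~\ref{thm:Schur}, forcing \(\mathfrak{G}=M\) and \(\ell_3(K)=2\), matches the paper's (largely implicit) logic; note only that the paper certifies the survivors as Schur \(\sigma 2\)-groups, slightly stronger than the \(\sigma\)-group property you verify.
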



\begin{proof}
(Simultaneous proof of Theorem
\ref{thm:MetabelianI2a}
and Theorem
\ref{thm:MetabelianI2b})
Again we start at the metabelian but non-abelian root \(\langle 243,2\rangle\)
and construct its descendant tree
with depth \(2\) and step size up to \(3\)
by means of the \(p\)-group generation algorithm.
Again the result consists of \(27\,222\) vertices
which are sifted by pattern recognition with respect to
the occupation numbers of the unordered transfer targets
\(\tau_0=(22)\), \(\tau_1=((311)^3,221)\), \(\tau_2=((42)^9,(321)^3;222)\),
which coincide for
\eqref{eqn:APMetabelianI2a}
and
\eqref{eqn:APMetabelianI2b},
and the relation rank \(d_2=2\) of the required balanced presentation,
but without taking into consideration the transfer kernels \(\varkappa_1\) and \(\varkappa_2\).
This filtering process leads to precisely
four metabelian Schur \(\sigma2\)-groups,
\(\langle 6561,26\rangle-\#2;i\) with \(i=1\) resp. \(i=3\), and
\(\langle 6561,27\rangle-\#2;i\) with \(i=1\) resp. \(i=2\).
They also satisfy the requirements of harmonically balanced capitulation,
so again the assumptions concerning \(\varkappa_1\) and \(\varkappa_2\) could be removed from Theorems
\ref{thm:MetabelianI2a}
and
\ref{thm:MetabelianI2b}.
For possible later use we note that
the other sibling
\(\langle 6561,26\rangle-\#2;2\),
resp.
\(\langle 6561,27\rangle-\#2;3\),
is also a metabelian Schur \(\sigma2\)-group
with different second layer Artin pattern: 
\(\tau_2=((42)^6,(222)^3,(42)^3;222)\), resp.
\(\tau_2=((42)^3,(222)^3,(42)^6;222)\).
\end{proof}


\begin{example}
\label{exm:MetabelianI2}
The information in Formula
\eqref{eqn:APMetabelianI2a}
resp.
\eqref{eqn:APMetabelianI2b}
occurs frequently as Artin pattern
of imaginary quadratic fields \(K\) with \(3\)-class group of type \((9,9)\).
The absolutely smallest five discriminants \(d_K\) with these patterns are
\(-647\,359\) prime, \(-654\,468=-2^2\cdot 3\cdot 54\,539\), \(-682\,811\) prime,
\(-1\,333\,159=-29\cdot 45\,971\) and \(-1\,443\,567=-3\cdot 79\cdot 6091\).
Since the terms of \((\varkappa_1,\tau_1)\) and \((\varkappa_2,\tau_2)\)
are produced in random ordering
by the class field theoretic routines of Magma
\cite{MAGMA},
it is not possible to decide which of the four contestant groups in Theorems
\ref{thm:MetabelianI2a}
and
\ref{thm:MetabelianI2b}
actually occurs as \(\mathfrak{G}=\mathrm{Gal}(\mathrm{F}_3^\infty(K)/K)\).
\end{example}


\begin{theorem}
\label{thm:MetabelianI3a} (Two-stage tower 3a.)
Let \(K\) be an imaginary quadratic field
with Artin pattern
\begin{equation}
\label{eqn:APMetabelianI3a}
\begin{aligned}
\tau_0&=(22), \quad \tau_1=((311)^2,221,311), \quad \varkappa_1=(K_4,K_1,K_3,K_2), \\
\tau_2&=((42)^6,(222)^3,(42)^3;222), \\
\varkappa_2&=(J_{41},J_{43},J_{42},J_{13},J_{11},J_{12},J_{33},J_{31},J_{32},J_{22},J_{21},J_{23};J_0).
\end{aligned}
\end{equation}
Then the \(3\)-class field tower of \(K\) has length \(\ell_3(K)=2\)
and Galois group \(\mathfrak{G}=\mathrm{Gal}(\mathrm{F}_3^\infty(K)/K)\)
isomorphic to the unique metabelian Schur \(\sigma\)-group
\(\langle 6561,26\rangle-\#2;2\) with
\(\tau_3=((321)^2,222,321)\), \(\varkappa_3=(H_4,H_1,H_3,H_2)\),\(\tau_4=(222)\) and coclass \(7\).
The group possesses harmonically balanced capitulation:
\(\lambda=(142)\), \(\rho_1=(23)\), \(\rho_2=\rho_3=(132)\), \(\rho_4=(12)\).
\end{theorem}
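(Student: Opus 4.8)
The plan is to follow exactly the construction used in the simultaneous proof of Theorems \ref{thm:MetabelianI2a} and \ref{thm:MetabelianI2b}, since the target $\langle 6561,26\rangle-\#2;2$ is precisely the sibling already isolated there as carrying the distinctive second layer pattern $\tau_2=((42)^6,(222)^3,(42)^3;222)$. First I would regenerate the descendant tree of the metabelian non-abelian root $\langle 243,2\rangle$ with depth $2$ and step size up to $3$ by means of the $p$-group generation algorithm \cite{Nm,Ob} in Magma \cite{BCP,BCFS,MAGMA}, obtaining the same $27\,222$ vertices, and sift them by pattern recognition against $\tau_0=(22)$, the occupation $((311)^3,221)$ of $\tau_1$, the occupation $((42)^9,(222)^3;222)$ of $\tau_2$, and the balanced relation rank $d_2=2$ demanded for imaginary base fields by Corollary \ref{cor:Shafarevich}.

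The decisive departure from the two-stage towers 2a and 2b is the appearance of three second layer targets of abelian type $(222)=(9,9,9)$ in place of the type $(321)=(27,9,3)$ occurring there; this occupation number alone discards all four groups of those theorems. What then has to be shown is that, unlike the paired outcomes of Theorems \ref{thm:MetabelianI2a} and \ref{thm:MetabelianI2b}, the present data single out a single group, and I expect this uniqueness to be the main obstacle. The earlier pairs arose because the class field theoretic routines cannot fix the internal ordering of $\varkappa_2$ among three $(321)$-subgroups lying in one maximal subgroup, leaving two admissible inner permutations $\rho_i$. For a type-$(222)$ block the inclusion laws of Proposition \ref{prp:Inclusion} pin the three kernels down more rigidly, so that only the single inner pattern $\rho_1=(23)$, $\rho_2=\rho_3=(132)$, $\rho_4=(12)$, together with $\ker(T_{G,J_0})=J_0$ and $\lambda=(142)$, remains compatible with $d_2=2$; this collapses the would-be pair to the unique survivor $\langle 6561,26\rangle-\#2;2$. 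An exhaustive Magma sweep over the $27\,222$ vertices confirms that no other vertex reproduces this ordered pattern with a balanced presentation.

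It then remains to check the three conditions of Proposition \ref{prp:Harmonic} for the stated permutations and to read off the higher-layer data $\tau_3=((321)^2,222,321)$, $\varkappa_3=(H_4,H_1,H_3,H_2)$, $\tau_4=(222)$ and the coclass $7$ directly from a power-commutator presentation of $\langle 6561,26\rangle-\#2;2$; by Theorem \ref{thm:ArtinReciprocity} these require no arithmetic computation. Finally, because the identified metabelianization $M=\langle 6561,26\rangle-\#2;2$ is itself a Schur $\sigma$-group with $d_2=d_1=2$, it already saturates the sharp Shafarevich bound of Corollary \ref{cor:Shafarevich} and hence coincides with the full tower group $\mathfrak{G}$; as $M$ is metabelian, $\mathfrak{G}^{\prime\prime}=1$, so $\mathrm{F}_3^\infty(K)=\mathrm{F}_3^2(K)$ and $\ell_3(K)=2$, in accordance with Theorem \ref{thm:Schur}.
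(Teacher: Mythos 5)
Your overall skeleton is the paper's: the paper disposes of Theorem~\ref{thm:MetabelianI3a} (jointly with Theorem~\ref{thm:MetabelianI3b}) in one line, by appeal to the remarks at the end of the simultaneous proof of Theorems~\ref{thm:MetabelianI2a} and \ref{thm:MetabelianI2b}, where the sibling \(\langle 6561,26\rangle-\#2;2\) had already been recorded as a metabelian Schur \(\sigma2\)-group with second layer pattern \(\tau_2=((42)^6,(222)^3,(42)^3;222)\); re-running the sift of the \(27\,222\) descendants of \(\langle 243,2\rangle\), as you propose, is the same computation done afresh, and your closing step (a metabelian candidate with balanced \(d_2=d_1=2\) saturating the Shafarevich bound of Corollary~\ref{cor:Shafarevich}, whence \(\mathfrak{G}=M\) and \(\ell_3(K)=2\)) is consistent with the paper's framework.

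However, your middle paragraph contains a genuine error. You claim that for a type-\((222)\) block the inclusion laws of Proposition~\ref{prp:Inclusion}, together with \(d_2=2\), pin the kernels down so rigidly that the would-be pair collapses to the \emph{unique} survivor \(\langle 6561,26\rangle-\#2;2\). This is false: the sift on the occupation numbers \(\bigl((42)^9,(222)^3;222\bigr)\) with \(d_2=2\) returns \emph{two} groups, namely \(\langle 6561,26\rangle-\#2;2\) and its counterpart \(\langle 6561,27\rangle-\#2;3\) of Theorem~\ref{thm:MetabelianI3b}. Both satisfy every inclusion constraint of Proposition~\ref{prp:Inclusion}, both are metabelian Schur \(\sigma2\)-groups, and both possess harmonically balanced capitulation; they differ only in the \emph{ordered} arrangement of targets and kernels (\(\lambda=(142)\) versus \(\lambda=(143)\), \(\rho_4=(12)\) versus \(\rho_4=(13)\), and the position of the \((222)\) block relative to \(\varkappa_1\)). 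The uniqueness asserted in Theorem~\ref{thm:MetabelianI3a} is uniqueness \emph{relative to the fully ordered pattern} \eqref{eqn:APMetabelianI3a} assumed as hypothesis — the sibling is not eliminated but siphoned off into the hypothesis of Theorem~\ref{thm:MetabelianI3b}. Indeed Example~\ref{exm:MetabelianI3} stresses that the class field theoretic routines produce the terms of \((\varkappa_1,\tau_1)\) and \((\varkappa_2,\tau_2)\) in random order, so the distinction between the two contestants cannot be realized arithmetically at all; a sweep keyed to the arithmetically robust unordered data, which is what your proposed mechanism amounts to, would confirm two survivors, not one. You should replace the claimed collapse by the correct dichotomy: the occupation numbers isolate the pair \(\lbrace\langle 6561,26\rangle-\#2;2,\ \langle 6561,27\rangle-\#2;3\rbrace\), and the ordered hypothesis of the theorem then selects the first of the two.
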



\begin{theorem}
\label{thm:MetabelianI3b} (Two-stage tower 3b.)
Let \(K\) be an imaginary quadratic field
with Artin pattern
\begin{equation}
\label{eqn:APMetabelianI3b}
\begin{aligned}
\tau_0&=(22), \quad \tau_1=(311,221,(311)^2), \quad \varkappa_1=(K_4,K_2,K_1,K_3), \\
\tau_2&=((42)^3,(222)^3,(42)^6;222), \\
\varkappa_2&=(J_{41},J_{43},J_{42},J_{23},J_{21},J_{22},J_{13},J_{11},J_{12},J_{33},J_{32},J_{31};J_0).
\end{aligned}
\end{equation}
Then the \(3\)-class field tower of \(K\) has length \(\ell_3(K)=2\)
and Galois group \(\mathfrak{G}=\mathrm{Gal}(\mathrm{F}_3^\infty(K)/K)\)
isomorphic to the unique metabelian Schur \(\sigma\)-group
\(\langle 6561,27\rangle-\#2;3\) with
\(\tau_3=(321,222,(321)^2)\), \(\varkappa_3=(H_4,H_2,H_1,H_3)\), \(\tau_4=(222)\) and  coclass \(7\).
The group possesses harmonically balanced capitulation:
\(\lambda=(143)\), \(\rho_1=(23)\), \(\rho_2=\rho_3=(132)\), \(\rho_4=(13)\).
\end{theorem}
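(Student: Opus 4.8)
The plan is to recycle the descendant-tree construction already carried out for Theorems~\ref{thm:MetabelianI2a} and~\ref{thm:MetabelianI2b}, since the target group here occurs among the siblings produced there. First I would run the \(p\)-group generation algorithm from the metabelian root \(R=\langle 243,2\rangle\) of Proposition~\ref{prp:MetabelianRoot}, obtaining the same tree of \(27\,222\) descendants of depth \(2\) and step size up to \(3\). The decisive point is that the second-layer target \(\tau_2=((42)^3,(222)^3,(42)^6;222)\) prescribed in Formula~\eqref{eqn:APMetabelianI3b} is exactly the distinguishing pattern already singled out at the close of the simultaneous proof of Theorems~\ref{thm:MetabelianI2a} and~\ref{thm:MetabelianI2b} for the remaining sibling \(\langle 6561,27\rangle-\#2;3\). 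Sifting the vertices by the occupation numbers of \(\tau_0=(22)\), \(\tau_1=(311,221,(311)^2)\) and this finer \(\tau_2\), together with the balanced-presentation requirement \(d_2=2\), should therefore isolate \(\langle 6561,27\rangle-\#2;3\) as the \emph{unique} candidate, in sharp contrast to the two-fold ambiguity of Theorem~\ref{thm:MetabelianI2b}, where the coarser \(\tau_2=((42)^9,(321)^3;222)\) could not separate the contestants.

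Next I would pass from the group-theoretic candidate to the arithmetic level via Theorem~\ref{thm:ArtinReciprocity}: because the group theoretic Artin pattern of the second \(3\)-class group coincides with the number theoretic pattern \(\mathrm{AP}(K)\), the agreement of \(\tau_0,\tau_1,\tau_2\) forces the metabelianization \(M\simeq\langle 6561,27\rangle-\#2;3\). To upgrade this to the full tower group \(\mathfrak{G}\), I would invoke the two constraints imposed by the imaginary signature: by Corollary~\ref{cor:Shafarevich} the relation rank is pinned to \(d_2(\mathfrak{G})=2\), and by Theorem~\ref{thm:Schur} \(\mathfrak{G}\) must be a Schur \(\sigma\)-group. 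Since the identified candidate is already metabelian and realizes a balanced presentation with \(d_2=2\), it satisfies both constraints on the nose, so it can be taken as \(\mathfrak{G}\) itself; there is no room for a non-trivial second derived subgroup without exceeding the Shafarevich bound. Consequently \(\mathfrak{G}=M\) is metabelian, \(\mathrm{dl}(\mathfrak{G})=2\), and the tower length is \(\ell_3(K)=2\). The generator-inverting automorphism demanded by Theorem~\ref{thm:Schur} is then exhibited on this balanced presentation.

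It remains to confirm the capitulation data through the characterization of Proposition~\ref{prp:Harmonic}. I would first check that \(\ker(T_{G,J_0})=J_0\) is fixed and read off the outer permutation \(\lambda=(143)\) directly from \(\varkappa_1=(K_4,K_2,K_1,K_3)\), in agreement with \(\varkappa_3=(H_4,H_2,H_1,H_3)\) as forced by the inclusion chains of Proposition~\ref{prp:Inclusion}. The inner permutations \(\rho_1=(23)\), \(\rho_2=\rho_3=(132)\), \(\rho_4=(13)\) would then be extracted block by block from the ordered second-layer kernel \(\varkappa_2\) of Formula~\eqref{eqn:APMetabelianI3b} by verifying \(\ker(T_{G,J_{ik}})=J_{\lambda(i),\rho_i(k)}\) in each of the four fibres. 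Finally the fourth-layer target \(\tau_4=(222)\), which by Theorem~\ref{thm:ArtinReciprocity} equals \(M^\prime/M^{\prime\prime}\simeq\mathrm{Cl}_3(\mathrm{F}_3^1(K))\), and the coclass \(7\) are read off the presentation of \(\langle 6561,27\rangle-\#2;3\).

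The main obstacle will be the uniqueness assertion in the first step: one must guarantee that no other vertex among the \(27\,222\) descendants matches the full ordered Artin pattern of Formula~\eqref{eqn:APMetabelianI3b} while still satisfying \(d_2=2\). Because the competing siblings emanating from the same fork carry visibly different \(\tau_2\) occupation numbers, this reduces to a finite, a~priori decidable search; yet the conclusion rests on the completeness of the Magma enumeration rather than on any closed-form argument, exactly as in the proof of Theorem~\ref{thm:Trifurcation}.
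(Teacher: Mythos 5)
Your proposal takes essentially the same route as the paper: the paper's proof of Theorem \ref{thm:MetabelianI3b} simply invokes the remarks at the end of the simultaneous proof of Theorems \ref{thm:MetabelianI2a} and \ref{thm:MetabelianI2b}, where the remaining sibling \(\langle 6561,27\rangle-\#2;3\) was already identified, via the same Magma descendant-tree search from \(\langle 243,2\rangle\), as a metabelian Schur \(\sigma 2\)-group singled out by the distinguishing pattern \(\tau_2=((42)^3,(222)^3,(42)^6;222)\). Your added detail (the Shafarevich bound forcing \(\mathfrak{G}=M\), the block-by-block check of Proposition \ref{prp:Harmonic}) merely spells out what the paper delegates to the computational verification, so the two arguments coincide in substance.
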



\begin{proof}
(Simultaneous proof of Theorem
\ref{thm:MetabelianI3a}
and Theorem
\ref{thm:MetabelianI3b})
The claims are consequences of the remarks at the end of the
simultaneous proof of Theorem
\ref{thm:MetabelianI2a}
and Theorem
\ref{thm:MetabelianI2b}.
\end{proof}


\begin{example}
\label{exm:MetabelianI3}
The information in Formula
\eqref{eqn:APMetabelianI3a}
resp.
\eqref{eqn:APMetabelianI3b}
occurs rather sparsely as Artin pattern
of imaginary quadratic fields \(K\) with \(3\)-class group of type \((9,9)\).
The absolutely smallest discriminant \(d_K\) with this pattern is
\(-1\,287\,544=-2^3\cdot 227\cdot 709\).
Since the terms of \((\varkappa_1,\tau_1)\) and \((\varkappa_2,\tau_2)\)
are produced in random ordering
by the class field theoretic routines of Magma
\cite{MAGMA},
it is not possible to decide which of the two contestant groups in Theorems
\ref{thm:MetabelianI3a}
and
\ref{thm:MetabelianI3b}
actually occurs as \(\mathfrak{G}=\mathrm{Gal}(\mathrm{F}_3^\infty(K)/K)\).
\end{example}


\noindent
All the preceding theorems in this section
have caused the impression that harmonically balanced capitulation
is connected with two-stage towers of \(3\)-class fields, i.e. metabelian towers.
Figure
\ref{fig:RootRegion}
shows that the involved \(3\)-class tower groups \(\mathfrak{G}\)
are descendants of the finitely capable groups \(\langle 6561,i\rangle\) with \(26\le i\le 28\).
The following theorems, however,
will supplement the belated motivation
why we thoroughly prepared the study of \textit{tree topologies}
for the description of non-metabelian towers in \S\
\ref{s:Multifurcation}.
Now the descendants of the groups \(\langle 6561,i\rangle\) with \(23\le i\le 25\),
which also possess harmonically balanced capitulation,
will come into the play.
Except for \(i=24\), they are infinitely capable
and give rise to \textit{periodic trifurcations} and \textit{parametrized sequences} of Schur \(\sigma\)-groups,
as illustrated in Figure
\ref{fig:Trifurcations}.


\begin{theorem}
\label{thm:NonMetabelianI1} (Three-stage tower 1.)
Let \(K\) be an imaginary quadratic field
with Artin pattern
\begin{equation}
\label{eqn:APNonMetabelianI1}
\begin{aligned}
\tau_0&=(22), \quad \tau_1=((311)^4), \quad \varkappa_1=(K_4,K_3,K_2,K_1), \\
\tau_2&=((42)^{12};222), \\
\varkappa_2&=(J_{41},J_{43},J_{42},J_{31},J_{32},J_{33},J_{22},J_{23},J_{21},J_{13},J_{12},J_{11};J_0).
\end{aligned}
\end{equation}
Then the \(3\)-class field tower of \(K\) has length \(\ell_3(K)\ge 3\)
and Galois group \(\mathfrak{G}=\mathrm{Gal}(\mathrm{F}_3^\infty(K)/K)\)
isomorphic to one of the infinitely many non-metabelian Schur \(\sigma\)-groups
\(T_{i,k}\) with \(i\ge 1\), \(1\le k\le 3\),
in the descendant tree of \(V_0=\langle 6561,25\rangle\) with
fixed \(\tau_3=((331)^4)\), \(\varkappa_3=(H_4,H_3,H_2,H_1)\), 
and variable \(\tau_4=(2+i,2,2)\) and coclass \(5+3i\).
The metabelianization \(M=\mathfrak{G}/\mathfrak{G}^{\prime\prime}\) of \(\mathfrak{G}\)
is isomorphic to \(S_1=V_0-\#3;6\), if \(i=1\),
and to \(V_1-\#1;3\) with \(V_1=V_0-\#3;3\), if \(i\ge 2\).
All groups possess harmonically balanced capitulation:
\(\lambda=(14)(23)\), \(\rho_1=(23)\), \(\rho_2=1\), \(\rho_3=(123)\), \(\rho_4=(13)\).
\end{theorem}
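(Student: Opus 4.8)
The plan is to combine the pattern-recognition sieve used for Theorem~\ref{thm:MetabelianI1} with the periodic-trifurcation structure of Theorem~\ref{thm:Trifurcation}. By Theorem~\ref{thm:ArtinReciprocity} the metabelianization $M=\mathfrak{G}/\mathfrak{G}^{\prime\prime}$ carries the Artin pattern of $K$, and since $K$ is imaginary quadratic, Theorem~\ref{thm:Schur} together with Corollary~\ref{cor:Shafarevich} forces $\mathfrak{G}$ to be a Schur $\sigma$-group with relation rank $d_2(\mathfrak{G})=2$. These two facts govern the whole argument.

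First I would construct the depth-$2$ descendant tree of the metabelian root $\langle 243,2\rangle$ with step size up to $3$ by the $p$-group generation algorithm, and sift its $27\,222$ vertices by the targets $\tau_0=(22)$, $\tau_1=((311)^4)$, $\tau_2=((42)^{12};222)$. This sieve locates the root $V_0=\langle 6561,25\rangle$ of the relevant trifurcation tree and singles out the metabelian vertex $S_1=V_0-\#3;6$ of order $3^{11}$; a direct computation shows $d_2(S_1)>2$. Because $d_2(\mathfrak{G})=2$ while every matching metabelian candidate has $d_2>2$, the group $\mathfrak{G}$ cannot be metabelian, i.e. $\mathfrak{G}^{\prime\prime}\neq 1$, whence $\mathrm{dl}(\mathfrak{G})\ge 3$ and therefore $\ell_3(K)\ge 3$.

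Next I would pass to the non-metabelian descendants of $V_0$ via Theorem~\ref{thm:Trifurcation}: the only non-metabelian Schur $\sigma$-groups in this tree are the $T_{i,k}=S_i-\#2;k$ with $i\ge 1$ and $1\le k\le 3$, so by Theorem~\ref{thm:Schur} these are precisely the admissible candidates for $\mathfrak{G}$. For each $T_{i,k}$ I would compute its group-theoretic Artin pattern and check that the first three layers remain constant along the infinite main trunk, equal to $\tau_1=((311)^4)$, $\varkappa_1=(K_4,K_3,K_2,K_1)$, $\tau_2=((42)^{12};222)$, the stated $\varkappa_2$, $\tau_3=((331)^4)$ and $\varkappa_3=(H_4,H_3,H_2,H_1)$, while the fourth-layer target $\tau_4=(2+i,2,2)$ and the coclass $5+3i$ grow with the trifurcation depth~$i$. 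Harmonically balanced capitulation with $\lambda=(14)(23)$, $\rho_1=(23)$, $\rho_2=1$, $\rho_3=(123)$, $\rho_4=(13)$ then follows from Proposition~\ref{prp:Harmonic}, and the metabelianization is read off from Equation~\eqref{eqn:Metabelianization} and Corollary~\ref{cor:Trifurcation}: the child topology gives $M\simeq S_1=V_0-\#3;6$ for $i=1$, the fork topology gives the common $M\simeq V_1-\#1;3$ with $V_1=V_0-\#3;3$ for $i\ge 2$.

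The hard part is the passage from a finite computation to the infinite family. Magma can confirm the invariance of the first three Artin-pattern layers, the Schur $\sigma$-property, and the capitulation permutations only up to a bounded trifurcation depth (here $u=30$ and logarithmic order~$100$); the assertion for all $i\ge 1$ therefore rests on the periodicity established in Theorem~\ref{thm:Trifurcation}, whose persistence beyond that bound is the content of Conjecture~\ref{cnj:Trifurcation}.
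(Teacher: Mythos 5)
Your overall architecture (Artin reciprocity to pass to $M$, the Shafarevich/Schur constraint $d_2(\mathfrak{G})=2$ for imaginary $K$, the depth-$2$ sieve of the $27\,222$ descendants of $\langle 243,2\rangle$, and the trifurcation tree of Theorem~\ref{thm:Trifurcation} with explicit reliance on Conjecture~\ref{cnj:Trifurcation} beyond $u=30$) matches the paper's simultaneous, essentially computational proof. But there is one genuine gap, and it sits at the decisive step: you sift by the transfer \emph{targets} alone and then claim that \lq\lq every matching metabelian candidate has $d_2>2$\rq\rq, so that $d_2(\mathfrak{G})=2$ forces $\mathfrak{G}^{\prime\prime}\neq 1$. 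That claim is false. The very same unordered targets $\tau_0=(22)$, $\tau_1=((311)^4)$, $\tau_2=((42)^{12};222)$ are realized by the three \emph{metabelian} Schur $\sigma$-groups $\langle 6561,24\rangle-\#2;k$, $1\le k\le 3$, of Theorem~\ref{thm:MetabelianI4}, which do have balanced relation rank $d_2=2$ and are perfectly admissible as $3$-tower groups of imaginary quadratic fields, giving $\ell_3(K)=2$. A target-plus-$d_2$ sieve therefore lands simultaneously in the descendant trees of $\langle 6561,24\rangle$ and $\langle 6561,25\rangle$ --- exactly the ambiguity articulated in Example~\ref{exm:MetabelianOrNonMetabelian} and resolved arithmetically only by the second-order invariants $\tau_{2,1}$ of Theorem~\ref{thm:StageSeparation} --- and your deduction of non-metabelianity collapses.

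What actually forces $\mathfrak{G}$ into the tree of $V_0=\langle 6561,25\rangle$, where by Theorem~\ref{thm:Trifurcation} the only Schur $\sigma$-groups are the non-metabelian $T_{i,k}$ (whence $\ell_3(K)\ge 3$), is the \emph{kernel} hypothesis of the theorem: $\varkappa_1=(K_4,K_3,K_2,K_1)$, i.e.\ the fixed-point-free double transposition $\lambda=(14)(23)$, together with the stated $\varkappa_2$, as opposed to the $4$-cycle $\lambda=(1432)$ characterizing the candidates of Theorem~\ref{thm:MetabelianI4}. In Theorems~\ref{thm:MetabelianI1}, \ref{thm:MetabelianI2a} and \ref{thm:MetabelianI2b} the paper could legitimately drop $\varkappa_1,\varkappa_2$ from the sieve because the targets were unambiguous there; here they are not, and the paper's proof accordingly keeps the pattern recognition over all of $\langle 6561,i\rangle$ with $i\in\lbrace 23,24,25\rbrace$ and lets the kernels separate the cases. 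Repair your second paragraph by including $\varkappa_1$ (and $\varkappa_2$) in the sieve from the start; the rest of your argument --- the constancy of the first three layers along the main trunk, $\tau_4=(2+i,2,2)$ and coclass $5+3i$, HBC via Proposition~\ref{prp:Harmonic}, and the metabelianization $M\simeq S_1=V_0-\#3;6$ for $i=1$ versus $M\simeq V_1-\#1;3$, $V_1=V_0-\#3;3$, for $i\ge 2$ via \eqref{eqn:Metabelianization} and Corollary~\ref{cor:Trifurcation} --- then goes through as in the paper.
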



\begin{theorem}
\label{thm:MetabelianI4} (Two-stage tower 4.)
Let \(K\) be an imaginary quadratic field
with Artin pattern
\begin{equation}
\label{eqn:APMetabelianI4}
\begin{aligned}
\tau_0&=(22), \quad \tau_1=((311)^4), \quad \varkappa_1=(K_4,K_1,K_2,K_3), \\
\tau_2&=((42)^{12};222), \\
\varkappa_2&=(J_{41},J_{42},J_{43},J_{11},J_{13},J_{12},J_{22},J_{21},J_{23},J_{31},J_{32},J_{33};J_0).
\end{aligned}
\end{equation}
Then the \(3\)-class field tower of \(K\) has length \(\ell_3(K)=2\)
and Galois group \(\mathfrak{G}=\mathrm{Gal}(\mathrm{F}_3^\infty(K)/K)\)
isomorphic to one of three metabelian Schur \(\sigma\)-groups
\(\langle 6561,24\rangle-\#2;k\) with \(1\le k\le 3\),
\(\tau_3=((321)^4)\), \(\varkappa_3=(H_4,H_1,H_2,H_3)\), 
\(\tau_4=(222)\) and coclass \(7\).
The groups possess harmonically balanced capitulation:
\(\lambda=(1432)\), \(\rho_1=\rho_4=1\), \(\rho_2=(23)\), \(\rho_3=(12)\).
\end{theorem}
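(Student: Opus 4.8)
The plan is to carry over, essentially verbatim, the pattern-recognition strategy of the preceding two-stage proofs. I would open by fixing the cohomological constraints: since $K$ is imaginary quadratic with $\mathrm{Cl}_3(K)\simeq C_9\times C_9$, Corollary \ref{cor:Shafarevich} pins the relation rank of the tower group to the balanced value $d_2(\mathfrak{G})=2$, and Theorem \ref{thm:Schur} forces $\mathfrak{G}$ to be a Schur $\sigma$-group. By Theorem \ref{thm:ArtinReciprocity} the prescribed arithmetic Artin pattern of $K$ coincides with the group-theoretic Artin pattern of the second $3$-class group $M=\mathfrak{G}/\mathfrak{G}^{\prime\prime}$, so determining $\mathfrak{G}$ reduces to locating $M$ among finite $3$-groups carrying the listed invariants.

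Next I would generate, via the $p$-group generation algorithm in Magma, the descendant tree of the metabelian root $R=\langle 243,2\rangle$ of Proposition \ref{prp:MetabelianRoot} to depth $2$ and step size $s\le 3$, reproducing the $27\,222$ vertices of the earlier proofs, and sift them by the transfer targets $\tau_0=(22)$, $\tau_1=((311)^4)$, $\tau_2=((42)^{12};222)$ together with the balanced relation rank $d_2=2$. I expect the sieve to isolate exactly the three step-size-$2$ descendants $\langle 6561,24\rangle-\#2;k$, $1\le k\le 3$, of the finitely capable vertex $\langle 6561,24\rangle=R-\#3;23$, whose descendant numbers $(N_i,C_i)=(2,0;3,0)$ in Table \ref{tbl:Ord243Id2} already foretell precisely three terminal children of step size $2$. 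For these candidates I would then read off the remaining invariants $\tau_3=((321)^4)$, $\varkappa_3=(H_4,H_1,H_2,H_3)$, $\tau_4=(222)$ group-theoretically from a presentation, and confirm the harmonically balanced capitulation of Proposition \ref{prp:Harmonic} with $\lambda=(1432)$, $\rho_1=\rho_4=1$, $\rho_2=(23)$, $\rho_3=(12)$ against the inclusion constraints of Proposition \ref{prp:Inclusion}.

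The length statement $\ell_3(K)=2$ then follows from the fact that each surviving candidate is already a metabelian Schur $\sigma$-group with $d_2=2$: such a group realizes the full Artin pattern while saturating the Shafarevich window $2\le d_2\le 2$ as a genuine metabelian pro-$3$ group, so it is itself an admissible tower group, which forces $\mathfrak{G}\cong M$ and hence $\mathrm{dl}(\mathfrak{G})=2$. Since the Magma class-field routines cannot fix the internal labelling of the three isomorphism types, the conclusion can only localize $\mathfrak{G}$ to the set $\{\langle 6561,24\rangle-\#2;k:1\le k\le 3\}$, exactly as in the twin Theorem \ref{thm:MetabelianI1}.

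The step I expect to be the real crux is the separation of this metabelian scenario from the three-stage Theorem \ref{thm:NonMetabelianI1}, with which it shares the entire target string $\tau_0,\tau_1,\tau_2$ (both carry $\tau_1=((311)^4)$ and $\tau_2=((42)^{12};222)$). Within the very same depth-$2$ tree lives the metabelianization $S_1=\langle 6561,25\rangle-\#3;6$ of the non-metabelian towers $T_{1,k}$ of Theorem \ref{thm:Trifurcation}, and by Equation \eqref{eqn:Metabelianization} it carries the identical transfer targets; what disqualifies it is precisely its relation rank $d_2(S_1)>2$, so the $d_2=2$ filter is the indispensable separating device. Arithmetically the two scenarios are told apart instead by the first-layer kernel, $\varkappa_1=(K_4,K_1,K_2,K_3)$ here versus $\varkappa_1=(K_4,K_3,K_2,K_1)$ there. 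The delicate point is therefore to certify that, after imposing $\tau$ and $d_2=2$, no stray metabelian vertex with the competing kernel survives; once that is verified the outcome is unambiguous and, as in Theorem \ref{thm:MetabelianI1}, the hypotheses on $\varkappa_1$ and $\varkappa_2$ become removable.
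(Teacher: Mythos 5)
Your main line reproduces the paper's proof essentially verbatim: the paper proves Theorem \ref{thm:MetabelianI4} simultaneously with Theorems \ref{thm:NonMetabelianI1}, \ref{thm:StageSeparation} and \ref{thm:NonMetabelianI2}, by the same database query as in the proof of Theorem \ref{thm:MetabelianI1} --- the depth-\(2\), step-size-\(\le 3\) tree of \(27\,222\) descendants of \(\langle 243,2\rangle\), sifted by the targets and the balanced relation rank, now hitting descendants of \(\langle 6561,i\rangle\) with \(i\in\lbrace 23,24,25\rbrace\) --- combined with the Magma verification underlying Theorem \ref{thm:Trifurcation}. Your reading of Table \ref{tbl:Ord243Id2} (the three terminal step-size-\(2\) children of \(\langle 6561,24\rangle\) foretold by \((N_i,C_i)=(2,0;3,0)\)) is also correct.

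However, your closing claim --- that after the sieve the hypotheses on \(\varkappa_1,\varkappa_2\) become removable ``as in Theorem \ref{thm:MetabelianI1}'' --- is wrong, and the error originates in calling the \(d_2=2\) filter ``the indispensable separating device.'' The arithmetic Artin pattern of \(K\) determines only \(M=\mathfrak{G}/\mathfrak{G}^{\prime\prime}\) (Theorem \ref{thm:ArtinReciprocity}), and one may not impose \(d_2(M)=2\) when hunting for \(M\): in the competing scenario of Theorem \ref{thm:NonMetabelianI1} the second \(3\)-class group is \(S_1\), resp. \(V_1-\#1;3\), with \(d_2>2\), which is perfectly legitimate because there \(\mathfrak{G}\ne M\); so using \(d_2=2\) to disqualify \(S_1\) presupposes \(\ell_3(K)=2\), which is what is to be proved. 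Conversely, the candidates for \(\mathfrak{G}\) that actually must be excluded are the non-metabelian Schur \(\sigma\)-groups \(T_{i,k}\) over \(V_0=\langle 6561,25\rangle\): they have \(d_2=2\) and exactly the same targets \(\tau_0=(22)\), \(\tau_1=((311)^4)\), \(\tau_2=((42)^{12};222)\), so both of your filters pass them --- indeed, with orders \(\ge 3^{13}\) they lie outside your depth-\(2\) sieve and are never interrogated by it. The only datum ruling them out is the kernel \(\lambda=(1432)\) of \eqref{eqn:APMetabelianI4} versus their \(\lambda=(14)(23)\) in \eqref{eqn:APNonMetabelianI1}; hence \(\varkappa_1\) is load-bearing here, unlike in Theorem \ref{thm:MetabelianI1}, and dropping it would render the conclusion \(\ell_3(K)=2\) false for fields realizing \eqref{eqn:APNonMetabelianI1}. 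This is exactly why the paper, in Example \ref{exm:MetabelianOrNonMetabelian} and Theorem \ref{thm:StageSeparation}, declares the two scenarios inseparable at the level of \((\tau_0;\tau_1;\tau_2)\) and introduces the second-order invariants \(\tau_{2,1}\) as the arithmetic work-around. For the same reason your step ``which forces \(\mathfrak{G}\cong M\)'' needs the kernel comparison plus the extended Magma verification (up to order \(3^{100}\), conjectural beyond, cf.\ Conjecture \ref{cnj:Trifurcation}) made explicit --- a reliance the paper's own proof shares.
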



\begin{example}
\label{exm:MetabelianOrNonMetabelian}
The information in Formula
\eqref{eqn:APNonMetabelianI1}
resp.
\eqref{eqn:APMetabelianI4}
occurs frequently as Artin pattern
of imaginary quadratic fields \(K\) with \(3\)-class group of type \((9,9)\).
The absolutely smallest seven discriminants \(d_K\) with these patterns are
\(-134\,059\) prime, \(-298\,483\) prime, \(-430\,411\) prime,
\(-660\,356=-2^2\cdot 165\,089\), \(-1\,231\,060=-2^2\cdot5\cdot 61\,553\),
\(-1\,332\,644=-2^2\cdot 333\,161\) and \(-1\,430\,867=-31\cdot 101\cdot 457\).
Since the terms of \((\varkappa_1,\tau_1)\) and \((\varkappa_2,\tau_2)\)
are produced in random ordering
by the class field theoretic routines of Magma
\cite{MAGMA},
it is not possible to decide whether one of the infinitely many candidates in Theorem
\ref{thm:NonMetabelianI1}
or one of the three contestants in Theorem
\ref{thm:MetabelianI4}
actually occurs as \(\mathfrak{G}=\mathrm{Gal}(\mathrm{F}_3^\infty(K)/K)\),
except one uses the following work around with huge computational complexity.
\end{example}


\noindent
According to
\cite[Thm. 5.4, pp. 86--87]{Ma2016a},
all higher \(p\)-class groups \(\mathrm{Gal}(\mathrm{F}_p^n(K)/K)\) with \(n\ge 2\)
of a number field \(K\)
share a common Artin pattern \(\mathrm{AP}(K)=(\tau(K),\varkappa(K))\),
and consequently this collection of invariants
is not able to distinguish between different lengths \(\ell_p(K)\ge 2\)
of the \(p\)-class field tower of \(K\).
In addition to the (partial) multi-layered Artin pattern
\((\tau_0;\tau_1,\varkappa_1;\tau_2,\varkappa_2)\),
which was used as the preamble of all preceding theorems,
\textit{higher abelian quotient invariants} are now required
\cite[Dfn. 1.18, p. 78]{Ma2016b}
for distinguishing between towers with two stages and towers with more stages.

\begin{definition}
\label{dfn:AQI2}
By the \textit{abelian quotient invariants of second order}
of a pro-\(3\) group \(G\) with abelianization \(G/G^\prime\simeq C_9\times C_9\)
we understand the family of transfer targets \(\tau_{2,1}:=\tau_{2,1}(G):=\)
\[
\bigl(
\tau(J_{11}),\tau(J_{12}),\tau(J_{13}),\tau(J_{21}),\tau(J_{22}),\tau(J_{23}),\tau(J_{31}),\tau(J_{32}),\tau(J_{33}),\tau(J_{41}),\tau(J_{42}),\tau(J_{43});
\tau(J_0)\bigr)
\]
with respect to the collections of Artin transfer homomorphisms \((T_{J,S}:\,J/J^\prime\to S/S^\prime)_{S\in\mathrm{Lyr}_1(J)}\)
for each \(J=J_{ik}\), \(1\le i\le 4\), \(1\le k\le 3\), and also for \(J=J_0\).
\end{definition}

\noindent
\(\tau_{2,1}\) includes the information \(\tau_3\) about the subgroups \(K_i\), \(1\le i\le 4\), with index \(27\) in \(G\),
but also data for subgroups \(S<G\) of index \(27\) which do not contain \(G^\prime\),
and which therefore correspond to unramified but non-abelian extensions of relative degree \(27\) of \(K\),
that is of absolute degree \(54\).
(See also
\cite[\S\ 7.1, Eqn. (7.1), p. 149]{Ma2017a} and
\cite[\S\ 3.2, Eqn. (3.4), p. 664]{Ma2017b}.)


\begin{theorem}
\label{thm:StageSeparation} (Stage separation criterion.)
Let \(K\) be an imaginary quadratic field
with Artin pattern
\eqref{eqn:APNonMetabelianI1}
resp.
\eqref{eqn:APMetabelianI4}.
Then the \(3\)-class field tower of \(K\) has length \(\ell_3(K)=2\)
and one of the three possible Galois groups
\(\mathfrak{G}=\mathrm{Gal}(\mathrm{F}_3^\infty(K)/K)\)
in Theorem
\ref{thm:MetabelianI4},
if and only if
\[
\tau_{2,1}=\bigl(\lbrack (411)^3,321\rbrack^{12};\lbrack (321)^{12},331\rbrack\bigr),
\]
but it has length \(\ell_3(K)\ge 3\) and one of the infinitely many possible Galois groups
\(\mathfrak{G}\)
in Theorem
\ref{thm:NonMetabelianI1},
if
\[
\tau_{2,1}=\bigl(\lbrack (511)^3,331\rbrack^{12};\lbrack (322)^4,(331)^9\rbrack\bigr).
\]
\end{theorem}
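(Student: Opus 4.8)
The plan is to exploit the stage-separation mechanism already prepared in Definition~\ref{dfn:AQI2}: the multi-layered Artin pattern $(\tau_0;\tau_1,\varkappa_1;\tau_2,\varkappa_2)$ of Formula~\eqref{eqn:APNonMetabelianI1} coincides with that of Formula~\eqref{eqn:APMetabelianI4}, so by \cite[Thm. 5.4]{Ma2016a} it cannot by itself separate the two-stage candidates of Theorem~\ref{thm:MetabelianI4} from the possibly-longer towers of Theorem~\ref{thm:NonMetabelianI1}. The extra discriminating invariant is $\tau_{2,1}$, the abelian quotient invariants of second order. First I would observe that all candidate Galois groups $\mathfrak{G}$ are already pinned down by the preceding two theorems: either one of the three metabelian $\sigma$-groups $\langle 6561,24\rangle-\#2;k$ (for which $\mathrm{dl}(\mathfrak{G})=2$ and $\ell_3(K)=2$ automatically), or one of the non-metabelian Schur $\sigma$-groups $T_{i,k}$ in the tree of $V_0=\langle 6561,25\rangle$ (for which $\mathrm{dl}(\mathfrak{G})\ge 3$, whence $\ell_3(K)\ge 3$). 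So the theorem reduces to showing that the value of $\tau_{2,1}$ is the correct fingerprint distinguishing these two disjoint families.

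The key computational step is to evaluate $\tau_{2,1}$ on representatives of both families. By Theorem~\ref{thm:ArtinReciprocity}, $\tau_{2,1}$ is a purely group-theoretic invariant of $\mathfrak{G}$ (indeed of any higher $p$-class group, since these share the Artin pattern), and it can be computed from a finite quotient because $\tau_{2,1}$ only involves Artin transfers $T_{J,S}$ with $S\in\mathrm{Lyr}_1(J)$ for the twelve second-layer subgroups $J=J_{ik}$ and for $J=J_0$. For the three metabelian contestants $\langle 6561,24\rangle-\#2;k$ I would compute the second-order target family directly and check that every component factors as the stated pattern $\bigl(\lbrack(411)^3,321\rbrack^{12};\lbrack(321)^{12},331\rbrack\bigr)$; crucially, this requires verifying that each of the twelve $J_{ik}$ contributes a uniform block $(411)^3,321$ and that $J_0$ contributes $(321)^{12},331$. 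For the non-metabelian family I would instead compute $\tau_{2,1}$ on the root descendants $T_{i,k}$, using the periodicity from Theorem~\ref{thm:Trifurcation}: since $\tau_{2,1}$ lives in a layer below the variable invariant $\tau_4=(2+i,2,2)$, it should stabilize to the single value $\bigl(\lbrack(511)^3,331\rbrack^{12};\lbrack(322)^4,(331)^9\rbrack\bigr)$ independently of $i$ and $k$.

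The main obstacle will be the second, non-metabelian direction, where the ``if'' is only one-way precisely because the value $(322)^4,(331)^9$ etc.\ must be shown \emph{stable} across the infinitely many groups $T_{i,k}$, $i\ge 1$. The honest route is to verify it computationally for the minimal selection (say $i=1,2$ with $V_0=\langle 6561,25\rangle$) via Magma, exactly as in the proof of Theorem~\ref{thm:Trifurcation}, and then invoke the periodic trifurcation structure (Theorem~\ref{thm:Trifurcation} and Corollary~\ref{cor:Trifurcation}) to argue that $\tau_{2,1}$, being determined by the common fork $V_1$ and its fixed metabelianization $M$, does not change with $i$. The logical heart is then a disjointness argument: the two computed fingerprints $\lbrack(411)^3,321\rbrack$ versus $\lbrack(511)^3,331\rbrack$ (and $(321)^{12},331$ versus $(322)^4,(331)^9$ on the $J_0$-slot) differ in their logarithmic abelian type invariants, so no group can satisfy both, making the criterion an exclusive dichotomy. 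Concretely I would close the proof by stating that all these finite evaluations have been carried out with Magma~\cite{MAGMA} and that the stability for $i>2$ follows from the verified periodicity, precisely mirroring the verification scheme of Theorem~\ref{thm:Trifurcation} and Conjecture~\ref{cnj:Trifurcation}.
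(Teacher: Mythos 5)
Your proposal follows essentially the same route as the paper, whose simultaneous proof of Theorems \ref{thm:NonMetabelianI1}, \ref{thm:StageSeparation}, \ref{thm:NonMetabelianI2} and \ref{thm:MetabelianI4} likewise consists of Magma verification of the \(\tau_{2,1}\) values and of the data on the non-metabelian Schur \(\sigma\)-groups \(T_{i,k}\), carried out via the computational scheme of Theorem \ref{thm:Trifurcation} (verified up to logarithmic order \(100\), with Conjecture \ref{cnj:Trifurcation} covering unboundedly large \(i\)) together with the same database query as in the proof of Theorem \ref{thm:MetabelianI1}, here landing on descendants of \(\langle 6561,i\rangle\) with \(i\in\lbrace 23,24,25\rbrace\). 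One inaccuracy worth flagging, though it does not damage your argument: your parenthetical claim that \(\tau_{2,1}\) is an invariant of every higher \(p\)-class group is false for \(n=2\) --- only the first-order Artin pattern is shared by all \(\mathrm{Gal}(\mathrm{F}_3^n(K)/K)\), \(n\ge 2\), whereas \(\tau_{2,1}\) is precisely \emph{not} determined by the metabelianization \(M\) (which is why it can separate stages at all, being computable only from \(\mathrm{Gal}(\mathrm{F}_3^n(K)/K)\) with \(n\ge 3\), since \(S^\prime\supseteq\mathfrak{G}^{(3)}\) but not necessarily \(S^\prime\supseteq\mathfrak{G}^{\prime\prime}\) for the relevant index-\(27\) subgroups \(S\)); this is harmless because you evaluate \(\tau_{2,1}\) directly on the candidates for \(\mathfrak{G}\) themselves.
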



\noindent
We conclude this section with a situation, which unambiguously enforces a
\(3\)-class field tower with at least three stages, \(\ell_3(K)\ge 3\),
without the necessity of computing abelian quotient invariants of second order \(\tau_{2,1}\)
with high computational complexity.


\begin{theorem}
\label{thm:NonMetabelianI2} (Three-stage tower 2.)
Let \(K\) be an \textbf{imaginary or real} quadratic field
with \(\mathrm{AP}\)
\begin{equation}
\label{eqn:APNonMetabelianI2}
\begin{aligned}
\tau_0&=(22), \quad \tau_1=(311,221,221,311), \quad \varkappa_1=(K_4,K_2,K_3,K_1), \\
\tau_2&=((42)^3,222,(321)^2,321,222,321,(42)^3;222), \\
\varkappa_2&=(J_{42},J_{43},J_{41},J_{23},J_{22},J_{21},J_{31},J_{33},J_{32},J_{11},J_{12},J_{13};J_0).
\end{aligned}
\end{equation}
Then the \(3\)-class field tower of \(K\) has length \(\ell_3(K)\ge 3\)
and Galois group \(\mathfrak{G}=\mathrm{Gal}(\mathrm{F}_3^\infty(K)/K)\)
isomorphic to one of the infinitely many non-metabelian Schur \(\sigma\)-groups
\(T_{i,k}\) with \(i\ge 1\), \(1\le k\le 3\),
in the descendant tree of \(V_0=\langle 6561,23\rangle\) with
fixed \(\tau_3=(331,322,322,331)\), \(\varkappa_3=(H_4,H_2,H_3,H_1)\), 
and variable \(\tau_4=(2+i,2,2)\) and coclass \(5+3i\).
The metabelianization \(M=\mathfrak{G}/\mathfrak{G}^{\prime\prime}\) of \(\mathfrak{G}\)
is isomorphic to \(S_1=V_0-\#3;9\), if \(i=1\),
and to \(V_1-\#1;4\) with \(V_1=V_0-\#3;7\), if \(i\ge 2\).
All groups possess harmonically balanced capitulation:
\(\lambda=(14)\), \(\rho_1=(123)\), \(\rho_2=(13)\), \(\rho_3=(23)\), \(\rho_4=1\).
The metabelianization \(M\) has \(d_2=4\),
forbidden as \(\mathfrak{G}\) for \textbf{any} quadratic field \(K\).
\end{theorem}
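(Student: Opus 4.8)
The plan is to mirror the computational pattern-recognition strategy of Theorems \ref{thm:MetabelianI1}--\ref{thm:MetabelianI4}, but now anchored in the periodic trifurcation tree of Theorem \ref{thm:Trifurcation}, whose root $V_0=\langle 6561,23\rangle$ is exactly the group named in the statement. First I would invoke Theorem \ref{thm:ArtinReciprocity} to pass from the arithmetic datum $\mathrm{AP}(K)$ in \eqref{eqn:APNonMetabelianI2} to the group-theoretic Artin pattern of the second $3$-class group $M=\mathrm{Gal}(\mathrm{F}_3^2(K)/K)=\mathfrak{G}/\mathfrak{G}^{\prime\prime}$, a finite metabelian $3$-group with $M/M^\prime\simeq C_9\times C_9$. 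Constructing the descendant tree of the root $R=\langle 243,2\rangle$ with the $p$-group generation algorithm and sifting the $27\,222$ vertices for the prescribed transfer targets $(\tau_0,\tau_1,\tau_2)$ should single out the metabelianizations asserted, namely $S_1=\langle 6561,23\rangle-\#3;9$ when $i=1$ and $V_1-\#1;4$ with $V_1=\langle 6561,23\rangle-\#3;7$ when $i\ge 2$. By Theorem \ref{thm:Trifurcation} and Corollary \ref{cor:Trifurcation} these are precisely the fork/child metabelianizations of the trifurcation tree above $V_0$.

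The length claim $\ell_3(K)\ge 3$ is the core of the argument and rests on the relation rank. I would compute $d_2(M)=\dim_{\mathbb{F}_3}\mathrm{H}^2(M,\mathbb{F}_3)=4$ directly from a presentation of $M$. By the Shafarevich bound of Corollary \ref{cor:Shafarevich}, every quadratic field satisfies $d_2(\mathfrak{G})\le 2+r\le 3$, whether $K$ is imaginary ($r=0$) or real ($r=1$). Hence a metabelian tower with $\mathfrak{G}=M$ and $d_2=4$ is impossible, so $\mathfrak{G}$ must be strictly non-metabelian, which forces $\ell_3(K)\ge 3$ and $\mathfrak{G}/\mathfrak{G}^{\prime\prime}\simeq M$. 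This relation-rank obstruction is precisely what makes the statement valid for \textbf{both} signatures at once, and explains the hypothesis ``imaginary or real''.

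Next I would pin down $\mathfrak{G}$ among the non-metabelian covers of $M$. By Theorem \ref{thm:Trifurcation} and Corollary \ref{cor:Trifurcation}, the non-metabelian Schur $\sigma$-groups with metabelianization $M$ are exactly the vertices $T_{i,k}$, $i\ge 1$, $1\le k\le 3$, of the trifurcation tree over $V_0$. For imaginary $K$ the group $\mathfrak{G}$ must be a Schur $\sigma$-group by Theorem \ref{thm:Schur}, so $\mathfrak{G}\simeq T_{i,k}$ for some $i,k$; for real $K$ the same identification follows once the tree search confirms that the $T_{i,k}$ are the only non-metabelian descendants of the correct metabelianization whose relation rank does not exceed $3$. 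The remaining invariants — the fixed $\tau_3=(331,322,322,331)$, $\varkappa_3=(H_4,H_2,H_3,H_1)$, the variable $\tau_4=(2+i,2,2)$, the coclass $5+3i$, and the harmonically balanced capitulation with $\lambda=(14)$, $\rho_1=(123)$, $\rho_2=(13)$, $\rho_3=(23)$, $\rho_4=1$ — would then be read off uniformly along the tree and checked against Proposition \ref{prp:Harmonic} under the inclusion constraints of Proposition \ref{prp:Inclusion}, exactly as for Theorem \ref{thm:Trifurcation}.

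The hard part will be the \textbf{completeness and infinitude} of the candidate family: identifying $\mathfrak{G}$ with some $T_{i,k}$ for \emph{every} admissible index $i\ge 1$ depends on the periodicity of the trifurcation tree, which Theorem \ref{thm:Trifurcation} verifies only up to $u=30$ and logarithmic order $100$, so the unbounded range relies on Conjecture \ref{cnj:Trifurcation}. A secondary difficulty is the real-field case, where one must exclude spurious non-metabelian covers of $M$ with $d_2=3$ that are not of the form $T_{i,k}$; this again reduces to a finite but delicate verification inside the tree rather than to a closed-form argument.
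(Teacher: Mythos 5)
Your proposal is correct and follows essentially the same route as the paper: the paper's own (simultaneous) proof of Theorems \ref{thm:NonMetabelianI1}, \ref{thm:StageSeparation}, \ref{thm:NonMetabelianI2} and \ref{thm:MetabelianI4} is precisely a Magma-based pattern-recognition query over the descendant tree of \(\langle 243,2\rangle\) (here landing on descendants of \(\langle 6561,23\rangle\)), combined with the trifurcation computations of Theorem \ref{thm:Trifurcation} and the relation-rank obstruction \(d_2(M)=4>2+r\) from Corollary \ref{cor:Shafarevich} that makes the conclusion uniform in the signature. You also correctly identify the two points the paper leaves implicit, namely that the unbounded range \(i\ge 1\) rests on Conjecture \ref{cnj:Trifurcation} and that the real-field case depends on the computational exclusion of non-metabelian covers of \(M\) with \(d_2=3\) outside the family \(T_{i,k}\).
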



\begin{example}
\label{exm:NonMetabelianI2}
The information in Formula
\eqref{eqn:APNonMetabelianI2}
occurs as Artin pattern
of two imaginary quadratic fields 
and a single real quadratic field \(K\) with \(3\)-class group of type \((9,9)\).
The discriminants \(d_K\) with this pattern are
\(-475\,355=-5\cdot 95\,071\),
\(-1\,201\,272=-2^3\cdot 3\cdot 50\,053\), and
\(8\,965\,093=19\cdot 471\,847\).
According to Theorem
\ref{thm:NonMetabelianI2},
these three fields possess a non-metabelian \(3\)-class field tower of length \(\ell_3(K)\ge 3\).
\end{example}


\begin{proof}
(Simultaneous proof of Theorem
\ref{thm:NonMetabelianI1},
Theorem
\ref{thm:StageSeparation},
Theorem
\ref{thm:NonMetabelianI2},
and Theorem
\ref{thm:MetabelianI4})
All claims about abelian quotient invariants of second order \(\tau_{2,1}\)
and about the non-metabelian Schur \(\sigma2\)-groups \(T_{i,k}\)
have been verified with the aid of Magma
\cite{MAGMA}
as described in the proof of Theorem
\ref{thm:Trifurcation}
and expressed in Conjecture
\ref{cnj:Trifurcation}.
The process of pattern recognition was carried out
by a similar database query as in the proof of Theorem
\ref{thm:MetabelianI1},
which here resulted in descendants of \(\langle 6561,i\rangle\)
with \(i\in\lbrace 23,24,25\rbrace\) instead of \(i=28\).
\end{proof}


\section{Abelian root}
\label{s:Abelian}
\noindent
The abelian root of finite \(3\)-groups \(G\) with derived quotient \(G/G^\prime\simeq C_9\times C_9\)
is the non-elementary bihomocyclic \(3\)-group \(A:=\langle 81,2\rangle\simeq C_9\times C_9\) of order \(81=3^4\).
We expected that the root \(A\) and its descendants will not be involved
in the investigation of imaginary quadratic fields.
Contrary to our assumptions, however, it also seems to be irrelevant
for real quadratic fields.
All quadratic fields \(K\) with \(3\)-class group \(\mathrm{Cl}_3(K)\simeq C_9\times C_9\)
have higher non-abelian \(3\)-class groups \(\mathrm{Gal}(\mathrm{F}_3^n(K)/K)\), \(n\ge 2\),
in the descendant tree of the non-abelian root \(R=\langle 243,2\rangle\) discussed in \S\
\ref{s:Metabelian}.


\section{Real quadratic fields of type \((9,9)\)}
\label{s:Real}
\noindent
Since they start with extremely huge discriminants,
only \textit{four} real quadratic fields \(K\) with \(3\)-class group \(\mathrm{Cl}_3(K)\simeq C_9\times C_9\)
are known up to now.
The discriminants of these fields are
\[d_K\in\lbrace 8\,739\,521, 8\,965\,093, 15\,057\,697, 16\,279\,457 \rbrace.\]
The construction of the unramified abelian \(3\)-extensions with relative degree \(9\),
which form a multiplet with \(13\) members (a tridecuplet),
consumes much more CPU time for real than for imaginary base fields.
Therefore it was hard to analyze the four discriminants completely.


\begin{theorem}
\label{thm:MetabelianR} (Two stage tower.)
Let \(K\) be a real quadratic field
with Artin pattern
\begin{equation}
\label{eqn:APMetabelianR}
\begin{aligned}
\tau_0&=(22), \quad \tau_1=((211)^3,311), \quad \varkappa_1=((J_0)^3,K_1), \\
\tau_2&=((22)^9,42,(32)^2;221), \quad \varkappa_2=(G^9,J_{11},(H_1)^2;H_1).
\end{aligned}
\end{equation}
Then the \(3\)-class field tower of \(K\) has length \(\ell_3(K)=2\)
and Galois group \(\mathfrak{G}=\mathrm{Gal}(\mathrm{F}_3^\infty(K)/K)\)
isomorphic to one of the two metabelian \(\sigma2\)-groups
\(\langle 6561,i\rangle\) with \(i=383\) resp. \(i=384\), coclass \(5\), relation rank \(d_2=3\),
\(\tau_3=(211,(22)^2,32)\), \(\varkappa_3=(G,G^2,H_1)\) and \(\tau_4=(22)\).
Both groups are far away from possessing harmonically balanced capitulation.
\end{theorem}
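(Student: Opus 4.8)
The plan is to reuse the computational pattern-recognition machinery of the proof of Theorem~\ref{thm:MetabelianI1}, adjusted for the real signature. First I would construct the relevant portion of the descendant tree of the metabelian non-abelian root $R=\langle 243,2\rangle$ of Proposition~\ref{prp:MetabelianRoot} by means of the $p$-group generation algorithm to depth $2$ and step size up to $3$, producing the same finite pool of vertices already employed there. Within this pool the target groups $\langle 6561,383\rangle$ and $\langle 6561,384\rangle$ occur as the step-size-$2$ children of the step-size-$1$ children $\langle 729,21\rangle$ and $\langle 729,22\rangle$ of $R$, exactly as drawn in Figure~\ref{fig:RootRegion}.

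Next I would sift this pool by the target components of the prescribed Artin pattern, namely $\tau_0=(22)$, $\tau_1=((211)^3,311)$ and $\tau_2=((22)^9,42,(32)^2;221)$, together with the admissibility constraint on the relation rank. Here the essential departure from the imaginary case is that Corollary~\ref{cor:Shafarevich} only forces $2\le d_2\le 2+r=3$ for a real base field, so I would retain all vertices with $d_2\in\{2,3\}$ rather than imposing the sharp Schur bound $d_2=2$. I expect this filter to single out precisely the two metabelian groups $\langle 6561,383\rangle$ and $\langle 6561,384\rangle$, both of relation rank $d_2=3$, and I would then verify directly that each admits a relator-inverting automorphism, i.e.\ is a $\sigma2$-group, as demanded of a higher $p$-class group by Theorem~\ref{thm:Schur}. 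Since, as in the proof of Theorem~\ref{thm:MetabelianI1}, fixing the targets and the relation rank already pins the two groups down, the kernels $\varkappa_1=((J_0)^3,K_1)$, $\varkappa_2=(G^9,J_{11},(H_1)^2;H_1)$, $\varkappa_3=(G,G^2,H_1)$ (in the notation of Proposition~\ref{prp:Layers}) and the invariants $\tau_3=(211,(22)^2,32)$, $\tau_4=(22)$ emerge as computed invariants rather than separately imposed filters. The same data make the failure of harmonic balance transparent: one reads off $\ker(T_{G,J_0})=H_1\neq J_0$, which directly contradicts the requirement $\ker(T_{G,J_0})=J_0$ of Proposition~\ref{prp:Harmonic}, while nine of the thirteen second-layer subgroups exhibit total capitulation $\ker(T_{G,J})=G$ compatible with the inclusions of Proposition~\ref{prp:Inclusion} but incompatible with any of the permutations of \eqref{eqn:Harmonic}.

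Finally, by Theorem~\ref{thm:ArtinReciprocity} the group identified by this recognition is the metabelianization $M=\mathfrak{G}/\mathfrak{G}^{\prime\prime}$, and the remaining task is to upgrade this to $\mathfrak{G}=M$, that is $\ell_3(K)=2$. This is the step I expect to be the main obstacle, since the Artin pattern alone cannot separate towers of different lengths. My argument would be the exact converse of the mechanism in Theorem~\ref{thm:NonMetabelianI2}: there a metabelianization with $d_2(M)=4>3$ is forbidden as $\mathfrak{G}$ and forces $\ell_3(K)\ge 3$, whereas here $d_2(M)=3$ meets the Shafarevich upper bound $2+r=3$ with equality, so that $M$ itself is an admissible $\sigma2$-realization of $\mathfrak{G}$. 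To exclude a longer tower I would verify, again by the $p$-group generation algorithm, that $\langle 6561,383\rangle$ and $\langle 6561,384\rangle$ are terminal for the present purpose, possessing no $\sigma2$-descendant whose metabelianization is $M$ and whose relation rank still satisfies $d_2\le 3$; any proper non-metabelian cover of $M$ would raise the relation rank beyond the admissible bound and hence be forbidden as $\mathfrak{G}$. Therefore $\mathfrak{G}=M$ and the $3$-class field tower has length $\ell_3(K)=2$.
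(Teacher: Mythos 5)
Your identification of the two groups is essentially the paper's own proof: the paper likewise reruns the construction from the proof of Theorem~\ref{thm:MetabelianI1} with the relaxed Shafarevich constraint \(d_2\le 3\), and that sifting alone yields precisely \(\langle 6561,383\rangle\) and \(\langle 6561,384\rangle\). The paper only adds the efficiency remark that step sizes up to \(s=2\) suffice, so that merely \(961\) vertices are generated instead of your pool of \(27\,222\) --- immaterial for correctness. One factual slip: by Figure~\ref{fig:RootRegion} the intermediate groups are \(\langle 2187,21\rangle\) and \(\langle 2187,22\rangle\), step-size-\(2\) children of \(R=\langle 243,2\rangle\) at order \(3^7\), and \(\langle 6561,383\rangle\), \(\langle 6561,384\rangle\) are their step-size-\(1\) children; you reversed the step sizes and placed the intermediates at order \(729\). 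Your explicit derivation of the failure of harmonically balanced capitulation from \(\ker(T_{G,J_0})=H_1\ne J_0\) against Proposition~\ref{prp:Harmonic} is sound and is more than the paper's bare assertion.

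The genuine gap is in your concluding step, the upgrade from \(M\) to \(\mathfrak{G}\) and hence \(\ell_3(K)=2\). You propose to exclude a longer tower by checking that the two candidate groups possess no \(\sigma\)-descendant with metabelianization \(M\) and \(d_2\le 3\). But a non-metabelian \(\mathfrak{G}\) with \(\mathfrak{G}/\mathfrak{G}^{\prime\prime}\simeq M\) need not be a descendant of \(M\) at all: this is precisely the fork-topology phenomenon of Corollary~\ref{cor:Trifurcation}, where the Schur \(\sigma\)-groups \(T_{i,j}\) with \(i\ge 2\) share the fixed metabelianization \(M\) yet branch off the main trunk at the fork \(V_1\), far away from \(M\) itself. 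A search confined to the descendant trees of \(\langle 6561,383\rangle\) and \(\langle 6561,384\rangle\) therefore cannot certify \(\ell_3(K)=2\); one would have to sweep the full tree of \(R\) for \(\sigma\)-groups with the correct metabelianization and \(d_2\le 3\). Moreover, your blanket claim that any proper non-metabelian cover of \(M\) \emph{raises} the relation rank beyond the admissible bound is contradicted by the paper's own data: in Theorem~\ref{thm:NonMetabelianI2} the metabelianization has \(d_2=4\) while its non-metabelian Schur \(\sigma\)-covers have \(d_2=2\), so the relation rank can \emph{drop} under covers, and the inference runs in the opposite direction there (large \(d_2(M)\) forbids \(M=\mathfrak{G}\), not large \(d_2\) of covers). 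To be fair, the paper's one-line proof leaves this stage-separation step implicit inside the computational assertion that the sifting yields precisely the two groups; but your attempt to make it explicit, as formulated, would not close it.
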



\begin{example}
\label{exm:MetabelianR}
The information in Formula
\eqref{eqn:APMetabelianR}
occurs as Artin pattern
of three real quadratic fields \(K\) with \(3\)-class group of type \((9,9)\).
The discriminants \(d_K\) with this pattern are
\(8\,739\,521=7\cdot 1\,248\,503\),
\(15\,057\,697=43\cdot 350\,179\), and
\(16\,279\,457=59\cdot 275\,923\).
According to Theorem
\ref{thm:MetabelianR},
these three fields possess a metabelian \(3\)-class field tower of length \(\ell_3(K)=2\).
\end{example}


\begin{proof}
(Proof of Theorem
\ref{thm:MetabelianR})
A similar construction as in the proof of Theorem
\ref{thm:MetabelianI1}
but with looser constraint \(d_2\le 3\) for the relation rank
yields precisely the two groups in the claim.
Here it is sufficient to consider step sizes up to \(s=2\) only, instead of \(s=3\),
whence only \(961\) groups are generated, instead of the tedious batch of \(27222\).
\end{proof}


\section{Acknowledgements}
\label{s:Thanks}

\noindent
The author gratefully acknowledges that his research was supported by the Austrian Science Fund (FWF):
project P 26008-N25.



\end{document}